\numberwithin{equation}{section}
\def\@settitle{\begin{center}%
  \baselineskip14\p@\relax
  \bfseries
  \uppercasenonmath\@title
  \@title
  \ifx\@subtitle\@empty\else
     \\[1ex]\uppercasenonmath\@subtitle
     \footnotesize\mdseries\@subtitle
  \fi
  \end{center}%
}
\def\subtitle#1{\gdef\@subtitle{#1}}
\def\@subtitle{}
\theoremstyle{plain}
\newtheorem{thm}{Theorem}[subsection] 
\theoremstyle{definition}
\newtheorem{rmk}[thm]{Remark}
\newtheorem{rem}[thm]{Remark}
\newtheorem*{rmk-intro}{Remark}
\theoremstyle{definition}
\theoremstyle{plain}
\newtheorem{prop}[thm]{Proposition}
\theoremstyle{plain}
\newtheorem{lemma}[thm]{Lemma}
\theoremstyle{plain}
\newtheorem{cor}[thm]{Corollary}
\theoremstyle{plain}
\newtheorem{conj}[thm]{Conjecture}
\theoremstyle{plain}
\newtheorem{thmintro}{Theorem}
\theoremstyle{plain}
\newtheorem{corintro}{Corollary}
\newtheorem{conjintro}{Conjecture}
\newcounter{parentnumber}
\newcommand{\Tac}{\mathbf{T}}
\newcommand{\Tc}{\mathbf{T}}
\newcommand{\colim@}[2]{%
  \vtop{\m@th\ialign{##\cr
    \hfil$#1\operator@font colim$\hfil\cr
    \noalign{\nointerlineskip\kern1.5\ex@}#2\cr
    \noalign{\nointerlineskip\kern-\ex@}\cr}}%
}
\newcommand{\colim}{%
  \mathop{\mathpalette\colim@{\rightarrowfill@\scriptscriptstyle}}\nmlimits@
}
\renewcommand{\varprojlim}{%
  \mathop{\mathpalette\varlim@{\leftarrowfill@\scriptscriptstyle}}\nmlimits@
}
\renewcommand{\varinjlim}{%
  \mathop{\mathpalette\varlim@{\rightarrowfill@\scriptscriptstyle}}\nmlimits@
}
\newcommand{\Z}{\mathbb{Z}}
\newcommand{\Q}{\mathbb{Q}}
\newcommand{\R}{\mathbb{R}}
\newcommand{\F}{\mathcal{F}}
\font\russ=wncyr10  1
\def\sha{\hbox{\russ\char88}}
\newcommand{\Lcal}{\mathcal{L}}
\newcommand{\Fcal}{\mathcal{F}}
\newcommand{\rH}{\mathrm{H}}
\begin{document}
\title{On refined nonvanishing conjectures by Kurihara and Kolyvagin}

\author{Francesc Castella}
\address[F.~Castella]{University of California Santa Barbara, South Hall, Santa Barbara, CA 93106, USA}
\email{castella@ucsb.edu}

\author{Takamichi Sano}
\address[T.~Sano]{Osaka Metropolitan University, Department of Mathematics, 3-3-138 Sugimoto, Sumiyoshi-ku, Osaka, 558-8585, Japan}
\email{tsano@omu.ac.jp}

\begin{abstract}
In this paper, we extend the results of \cite{BCGS} on refined conjectures by Kurihara and Kolyvagin, allowing primes of any reduction type in the case of Kurihara's conjectures, and inert primes in the underlying imaginary quadratic field in the case of Kolyvagin's. The key innovation is a new approach to the computation of the $p$-divisibility index of certain special elements in Galois cohomology (the bottom class of a $\Lambda$-adic Euler system twisted by a character sufficiently close to the trivial character) 
based on a reformulation of the Iwasawa Main Conjectures in terms of determinants of Selmer complexes.
\end{abstract}

\date{\today}
\maketitle
\tableofcontents

\section{Introduction}

In \cite{BCGS}, the first author with Burungale, Grossi and Skinner gave a proof of nonvanishing conjectures by Kolyvagin \cite{kolystructure} and Kurihara \cite{kurihara-iwasawa2012}, and their `quantitative' refinements \cite{zhang-CDM,kim}, for elliptic curves $E/\Q$ at certain primes $p$. Building on a new approach to the descent computations in \cite{BCGS}, in this paper we obtain an extension of the results in \emph{op.\,cit.} allowing $E$ to have any reduction type at $p$
in the case of the refined Kurihara conjectures, and $p$ to be any prime unramified   
in the underlying quadratic imaginary field $K$ in the case of the refined Kolyvagin conjectures. 

\subsection{Main results}

\subsubsection{Kurihara's analytic quantities}

Let $E/\Q$ be an elliptic curve of conductor $N$ without complex multiplication, and fix an odd prime $p$ such that
\begin{equation}\label{eq:irred}
\tag{sur}
\textrm{$\bar{\rho}: G_{\Q}=\operatorname{Gal}(\overline{\Q}/\Q)\to{\rm Aut}_{\mathbb{F}_p}(E[p])$ is surjective}.
\end{equation}
Denote by $\mathcal{L}$ the set of primes
\[
\mathcal{L}:=\{\ell\;\colon\;\textrm{$\ell\equiv 1\;({\rm mod}\,p)$,\; $\ell\nmid N$,\; and\; $a_\ell\equiv \ell+1\;({\rm mod}\,p)$}\},
\]
where $a_\ell:=\ell+1-\#\widetilde{E}(\mathbb{F}_\ell)$, and let $\mathcal{N}$ be the collection of all squarefree products of primes $\ell\in\mathcal{L}$, with the convention that $1\in\mathcal{N}$ (corresponding to the empty product). For each $\ell\in\mathcal{L}$, let $I_\ell\subset\Z$ be defined by
\[
I_\ell:=(\ell-1,a_\ell-\ell-1)\Z_p\cap\Z,
\]
and for each $n\in\mathcal{N}$ put $I_n=\sum_{\ell\mid n}I_\ell$ if $n\neq 1$ and $I_n=0$ otherwise.

Let $f\in S_2(\Gamma_0(N))$ be the newform attached to $E$. Let  $\Omega_E^+=\int_{E(\R)}\omega_E\in\R$ be the positive N\'eron period of $E$, where $\omega_E$ is a N\'{e}ron differential. Fix a modular parametrization
\[
\phi:X_0(N)\rightarrow E
\]
and let $c_\phi\in\Z$ be the associated \emph{Manin constant}, so that $\phi^*(\omega_E)=c_\phi\cdot 2\pi if(z)dz$. Assume that 
\begin{equation}\label{eq:manin}
p\nmid c_\phi. \tag{$p\nmid c_\phi$}
\end{equation}
(For instance, if $E$ is the strong Weil curve in the $\Q$-isogeny class attached to $f$ and $\phi$ is the optimal quotient, then \eqref{eq:manin} holds provided $p^2\nmid N$ by \cite[Cor.~4.1]{mazur}.) 

Using modular symbols, 
Kurihara  introduced in \cite{kurihara-munster}  certain quantities 
\[
\delta_n\in\Z/I_n
\]
shown to completely determine the structure of the $p$-primary Selmer group ${\rm Sel}_{p^\infty}(E/\Q)$ under some hypotheses   
(see \cite[Thm.~B]{kurihara-munster}). Specifically,  
\[
\delta_n:=\sum_{\substack{a=1\\(a,n)=1}}^n\overline{\left[\frac{a}{n}\right]}\Biggl(\prod_{\ell\mid n}{\rm log}_{\mathbb{F}_\ell}(a)\Biggr)\in \Z/I_n,
\]
where $\left[\frac{a}{n}\right]\in\Q$ is the modular symbol
\[
\left[\frac{a}{n}\right]={\rm Re}\biggl(2\pi i\int_{\infty}^{a/n}f(z)dz\biggr)/\Omega_E^+\in\Q,
\]
which is known to be $p$-integral by our assumptions on $p$ 
(see \cite[\S{3}]{stevens}), 
and $\overline{\left[\frac{a}{n}\right]}\in\Z/I_n$ denotes its reduction modulo $I_n$; and 
\[
{\rm log}_{\mathbb{F}_\ell}:\mathbb{F}_\ell^\times\cong\Z/(\ell-1)\Z\rightarrow\Z/I_\ell
\]
is the discrete logarithm defined by a fixed choice of primitive root $\eta_\ell\in\mathbb{F}_\ell^\times$ (so $\mathbb{F}_\ell^\times=\eta_\ell^{\Z}$), with the last arrow given by reduction modulo $I_\ell$. In particular, for $n=1$, $\delta_n$ reduces to the normalized $L$-value 
\begin{equation}\label{eq:delta-1}
\delta_1=L(E,1)/\Omega_E^+
\in\Q.
\end{equation}

\subsubsection{Kurihara's conjectures}\label{subsec:Kurihara}

Put
\[
{\rm Tam}_E:=\prod_{\ell\mid N}c_\ell,
\]
where $c_\ell=[E(\Q_\ell):E_0(\Q_\ell)]$ is the Tamagawa factor of $E$ at $\ell$. Let $\bar{\delta}_n\in\mathbb{F}_p$ denote the reduction of $\delta_n$ modulo $p$. The following conjecture was first  formulated by Kurihara in \cite[\S{1.2}, Conj.~1]{kurihara-iwasawa2012} in the $p$-ordinary case, and in  \cite[\S{1.2}, Conj.~1.1]{kurihara-TATA} for supersingular  $p$. As in \cite{kim}, it naturally extends to primes $p>2$  of bad reduction as long as hypotheses \eqref{eq:irred} and \eqref{eq:manin} both hold, and below we state it in this level of generality. 

\begin{conjintro}
\label{conj:Kurihara}
Let $p$ be an odd prime such that \eqref{eq:irred} and \eqref{eq:manin} both hold. If $p\nmid{\rm Tam}_E$, then  
\begin{center}
there exists $n\in \mathcal{N}$ such that $\bar{\delta}_{n}\neq 0$.
\end{center}
\end{conjintro}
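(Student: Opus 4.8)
The plan is to derive the existence of a nonzero $\bar\delta_n$ from the \emph{primitivity modulo $p$} of a Kolyvagin system built from Kato's Euler system for $E$, and to establish that primitivity by computing a $p$-divisibility index via the Iwasawa Main Conjecture recast in terms of determinants of Selmer complexes. First I would recall, from Kurihara's work and the comparison of Mazur--Tate elements with Euler system derivative classes carried out in \cite{BCGS}, the dictionary identifying the family $\{\delta_n\}_{n\in\mathcal{N}}$, reduced mod $p$ and up to $p$-adic units depending only on the auxiliary choices, with the mod-$p$ reduction of a Kolyvagin system $\boldsymbol{\kappa}=\{\kappa_n\}_{n\in\mathcal{N}}$ attached to Kato's $\Lambda$-adic class $\mathbf{z}$ for $T=T_pE$ with the relevant Selmer structure --- a finite-level incarnation of Kato's explicit reciprocity law together with the integrality of modular symbols, where \eqref{eq:irred} and \eqref{eq:manin} both enter. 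Under this dictionary the statement ``$\bar\delta_n=0$ for all $n\in\mathcal{N}$'' becomes ``$\boldsymbol{\kappa}\equiv 0 \pmod p$'', i.e.\ the non-primitivity of $\boldsymbol{\kappa}$; hypothesis \eqref{eq:irred} also supplies the inputs --- Chebotarev selection of the primes $\ell\in\mathcal{L}$, the vanishing of $H^0(\Q,E[p])$ and of its dual, and the core rank one property --- needed to run the Kolyvagin system formalism of Mazur--Rubin.

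By that formalism, primitivity of $\boldsymbol{\kappa}$ modulo $p$ amounts to a single $p$-divisibility index attached to its bottom class being zero. Rather than work at the trivial character --- where this index is entangled both with the arithmetic of $E(\Q)$ and of $\sha(E/\Q)[p^\infty]$, and with degenerate local phenomena at $p$ (exceptional zeros, or the bottom class falling into the finite part when $L(E,1)=0$) --- I would pass to the $\Lambda$-adic picture. Let $\Gamma=\Gal(\Q_\infty/\Q)$ be the Galois group of the cyclotomic $\Z_p$-extension and let $\chi\colon\Gamma\to\overline{\Q}_p^\times$ be a character of $p$-power order of sufficiently large conductor; form the twisted $\Lambda$-adic Euler system $\mathbf{z}_\chi$ and its bottom class in $H^1(\Q,T\otimes\chi)$. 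Since $L(E,\chi,1)\neq 0$ by Rohrlich's nonvanishing of twisted $L$-values, $\mathbf{z}_\chi$ is nonzero and fails the finite local condition at $p$, so it generates a finite-index submodule of a rank-one $\Z_p[\chi]$-Selmer module; and since $\chi\equiv 1$ modulo the maximal ideal $\fm_\chi$ of $\Z_p[\chi]$, we have $(T\otimes\chi)/\fm_\chi\cong E[p]$ and the Kolyvagin system of $\mathbf{z}_\chi$ reduces mod $\fm_\chi$ to $\boldsymbol{\kappa}\bmod p$. It therefore suffices to show that, for such a $\chi$, the bottom class of $\mathbf{z}_\chi$ has $\fm_\chi$-divisibility index zero, i.e.\ \emph{generates} that rank-one module.

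This is where the Main Conjecture enters. I would combine Kato's explicit reciprocity law --- which, under \eqref{eq:manin}, identifies the image of $\mathbf{z}$ under the Coleman/Perrin-Riou map with the cyclotomic $p$-adic $L$-function $L_p(E)$ (or its signed variants in the non-ordinary cases) up to a unit --- with the Iwasawa Main Conjecture for $E$ in the form appropriate to the reduction type at $p$, available through the work of Kato, Skinner--Urban, Kobayashi, Sprung, Wan and others (and, where needed, the refinements of this paper), formulated following the paper's main technical device as an identity of determinants of Selmer complexes. In this guise the index of the bottom class of $\mathbf{z}_\chi$ is measured by the discrepancy at $\chi$ between the Euler system side and the Selmer side of the Main Conjecture, and this discrepancy is concentrated in the $H^0$'s of the twisted representation and in local terms at the primes $\ell\mid N$: under \eqref{eq:irred} the former vanish, and the hypothesis $p\nmid{\rm Tam}_E=\prod_{\ell\mid N}c_\ell$ forces the latter to vanish modulo $p$. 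Hence, for all but finitely many characters $\chi$ of $\Gamma$ of $p$-power order --- in particular for a suitable $\chi$ close to the trivial character --- the index is zero, so $\boldsymbol{\kappa}$ is primitive modulo $p$ and some $\bar\delta_n$ is nonzero.

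The main obstacle --- and the reason one cannot merely quote the classical Main Conjecture --- lies in the precision demanded at the third step: the Selmer-complex/determinant reformulation must be sharp enough to pin down the index of $\mathbf{z}_\chi$ on the nose rather than up to a bounded error, and one must then check, uniformly across reduction types, that $p\nmid c_\phi$ and $p\nmid{\rm Tam}_E$ cancel \emph{every} Euler-characteristic correction term --- including the exceptional-zero contributions when $E$ is multiplicative at $p$ and the subtler local conditions at $p$ in the supersingular and additive cases, which are exactly the new cases treated here. Verifying the compatibility of the several available Main Conjectures with this formalism, and extending the Mazur--Tate versus Euler-system-derivative dictionary of the first step to primes of bad reduction, are the remaining points that will require genuine work.
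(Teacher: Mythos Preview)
Your approach is essentially the paper's: the implication (ii)$\Rightarrow$(i) in Theorem~\ref{thmintro-Kur} (specialized via Corollary~\ref{corintro:refined-Kur}) is proved by specializing the $\Lambda$-adic Kato Kolyvagin system at a character $\alpha$ close to trivial, computing the divisibility index of the bottom class from the determinant reformulation of the IMC (Proposition~\ref{prop:descent-Q}, Corollary~\ref{cor:index-m-Q}) combined with the Mazur--Rubin structure theorem (Theorem~\ref{thm:str-Kato}), and then descending to $\mathscr{M}_\infty(\boldsymbol{\delta})$ via rigidity (Proposition~\ref{prop:mccullen}), the congruence of Lemma~\ref{lem:congr-Kato}, and the derived reciprocity law (Theorem~\ref{thm:der-ERL}, due to \cite{kim} rather than \cite{BCGS}). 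Note, however, that the statement is a \emph{conjecture} in the paper and is not proved unconditionally: the argument requires the Iwasawa Main Conjecture~\ref{conj:IMC-det} as genuine input, and this is presently known only in the cases listed in Theorem~\ref{corintro:kur-cases} (good ordinary, or good supersingular with squarefree $N$), so your phrase ``available through the work of \dots'' overstates the current reach of the method and your argument, like the paper's, remains conditional outside those cases.
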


\subsubsection{Refined Kurihara's conjecture}

In this paper we study a strenghtening of Conjecture~\ref{conj:Kurihara} motivated by the aforementioned structure theorem by Kurihara \cite{kurihara-munster}. The same structure theorem was recently proved from a different perspective by C.-H.~Kim \cite{kim}, whose notations we largely follow. 

For each $n\in\mathcal{N}$ define $\mathscr{M}(n)\in\Z_{\geq 0}\cup\{\infty\}$ by $\mathscr{M}(n):=\infty$ if $\delta_n=0$, and by
\[
\mathscr{M}(n):=\max\{\mathscr{M}\geq 0\;\colon\;\delta_n\in p^{\mathscr{M}}\Z/I_n\}
\]
otherwise. Put $\mathscr{M}_{r}:=\min\{\mathscr{M}(n)\;\colon\;n\in\mathcal{N},\;\nu(n)=r\}$, where $\nu(n)$ denotes the number of prime factors of $n$. As shown in \cite[\S{10.5}]{kurihara-munster}, the quantity $\delta_n$ can be obtained as a coefficient of the Mazur--Tate $\theta$-element of conductor $n$ \cite{mazur-tate}, and it follows from the functional equation of the latter that $\delta_n=0$ unless $(-1)^{\nu(n)}=\epsilon$, where $\nu(n)$ where $\epsilon\in\{\pm{1}\}$ is the root number of $E$.

Putting $\mathscr{M}_{r}:=\min\{\mathscr{M}(n)\;\colon\;\nu(n)=r\}$, then one can show that 
$\mathscr{M}_{r}\geq\mathscr{M}_{r+2}\geq 0$ for all $r\geq 0$, and we put
\[
\mathscr{M}_{\infty}(\boldsymbol{\delta}):=\lim_{\substack{r\to\infty\\(-1)^r=\epsilon}}\mathscr{M}_{r},
\]
where $\boldsymbol{\delta}$ denotes the collection $\{\delta_n\}_{n\in\mathcal{N}}$. Thus  $\mathscr{M}_\infty(\boldsymbol{\delta})<\infty$ if and only if $\boldsymbol{\delta}\neq\{0\}$. Suppose 
this is the case, and let $\varrho$ be the `order of vanishing' of $\boldsymbol{\delta}$:
\[
\varrho:=\min\{r\geq 0\;\colon\;\textrm{$\delta_n\neq 0$\; and\; $\nu(n)=r$}\}.
\]
Then \cite[Thm.~1.8]{kim}, as originally proved by Kurihara \cite[Thm.~B]{kurihara-munster} under some hypotheses and from a different perspective, yields the following exact formula for $\sha_{\rm BK}(E[p^\infty]/\Q):={\rm Sel}_{p^\infty}(E/\Q)/{\rm Sel}_{p^\infty}(E/\Q)_{\rm div}$, where the subscript `${\rm div}$' denotes the maximal divisible submodule:
\begin{equation}\label{eq:str-Selmer}
{\rm ord}_p\bigl(\#\sha_{\rm BK}(E[p^\infty]/\Q)\bigr)=\mathscr{M}_\varrho-\mathscr{M}_{\infty}(\boldsymbol{\delta}).\nonumber
\end{equation}
In particular, if $L(E,1)\neq 0$, then $\varrho=0$ and  Kato's work \cite{kato-euler-systems} implies that $\sha_{\rm BK}(E[p^\infty]/\Q)$ is the same as the $p$-primary part $\sha(E/\Q)[p^\infty]$ of the Tate--Shafarevich group of $E$, so combined with \eqref{eq:delta-1} the above exact formula reduces to
\begin{equation}\label{eq:str-Selmer-rank0}
{\rm ord}_p\bigl(\#\sha(E/\Q)[p^\infty]\bigr)={\rm ord}_p\biggl(\frac{L(E,1)}{\Omega_E^+}\biggr)-\mathscr{M}_{\infty}(\boldsymbol{\delta}).\nonumber
\end{equation}
Together with the Birch--Swinnerton-Dyer formula, this motivates the following conjecture (in arbitrary rank)  formulated in \cite[Conj.~1.9]{kim}. 

\begin{conjintro}[Refined Kurihara's  conjecture]\label{conj:refined-Kur}
Let $p>3$ be a prime such that \eqref{eq:irred} and \eqref{eq:manin} both hold. Then
\[
\mathscr{M}_\infty(\boldsymbol{\delta})={\rm ord}_p({\rm Tam}_E).
\]
\end{conjintro}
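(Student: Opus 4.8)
The plan is to deduce the identity $\mathscr{M}_\infty(\boldsymbol{\delta})=\ord_p(\mathrm{Tam}_E)$ from an Iwasawa-theoretic computation of the $p$-divisibility of the bottom class of a $\Lambda$-adic Euler system. The first step is to reinterpret the quantities $\delta_n$, or rather the associated minima $\mathscr{M}_r$ and their limit, in terms of Kato's Euler system for $E/\Q$, using that (as recalled in \S\ref{subsec:Kurihara}, via \cite[\S{10.5}]{kurihara-munster}) $\delta_n$ arises as a coefficient of the Mazur--Tate $\theta$-element of conductor $n$. Concretely, I would pass to the anticyclotomic-free setting of Kurihara's theory: the collection $\boldsymbol{\delta}$ is governed by the image of Kato's zeta element under the family of finite-level Kolyvagin derivative maps, so $\mathscr{M}_\infty(\boldsymbol{\delta})$ should equal the $p$-divisibility index of the \emph{$\Lambda$-adic} bottom class $z_\infty$ twisted by a finite-order character $\chi$ of the cyclotomic tower that is \emph{sufficiently close to the trivial character}. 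This is exactly the kind of index computation the abstract mentions, and the whole point of the paper's new method.

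Next I would invoke the reformulation of the Iwasawa Main Conjecture in terms of determinants of Selmer complexes (established earlier in the paper, following the strategy of \cite{BCGS} but now unconditional on the reduction type at $p$). The key input is the divisibility, or equality, of characteristic ideals: $\mathrm{char}_\Lambda\bigl(H^2_{\mathrm{Iw}}\bigr)$ equals, up to the relevant local Tamagawa and period factors, the ideal generated by $z_\infty$ inside $H^1_{\mathrm{Iw}}$. The advantage of the Selmer-complex determinant formulation is that the local terms at the bad primes $\ell\mid N$ enter transparently as $\prod_{\ell\mid N}c_\ell=\mathrm{Tam}_E$, via the local Tamagawa-number formula for the local condition at $\ell$, while the local term at $p$ — the one that is delicate for additive or supersingular reduction — is packaged into a perfect complex whose determinant is a unit under \eqref{eq:irred} and \eqref{eq:manin}. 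Specializing the main-conjecture identity at the character $\chi$ close to $1$, the generic $p$-divisibility of $z_\infty\otimes\chi$ is forced to be precisely $\ord_p(\mathrm{Tam}_E)$: the Selmer-complex contribution away from the bad primes is trivial near the trivial character because $H^2$ has trivial support there (equivalently $L(E,1)/\Omega_E^+$ appears with no extra $p$ once we quotient by $\sha_{\mathrm{BK}}$), so the only surviving factor is the Tamagawa product.

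The final step is bookkeeping: combine the index computation of the twisted bottom class with Kurihara's/Kim's structural description — specifically the chain of inequalities $\mathscr{M}_r\geq\mathscr{M}_{r+2}\geq 0$ and the exact formula \eqref{eq:str-Selmer} relating $\mathscr{M}_\varrho-\mathscr{M}_\infty(\boldsymbol{\delta})$ to $\ord_p(\#\sha_{\mathrm{BK}}(E[p^\infty]/\Q))$ — together with the Bloch--Kato/Birch--Swinnerton-Dyer comparison, to extract $\mathscr{M}_\infty(\boldsymbol{\delta})$ as the `stable' part of the $\delta_n$'s. Matching this stable part with the Iwasawa-theoretic index then yields the equality with $\ord_p(\mathrm{Tam}_E)$. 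The main obstacle I anticipate is the second step: controlling the local term at $p$ when $E$ has additive or supersingular reduction, where there is no Dieudonné/ordinary splitting to trivialize the local condition — one must show the relevant local Selmer complex at $p$ is perfect with unit determinant (or compute its contribution explicitly), and that its specialization near $\chi=1$ does not contribute any spurious power of $p$. This is precisely where the determinant-of-Selmer-complex formalism is supposed to buy uniformity over reduction types, and verifying it rigorously — including the comparison of the $\Lambda$-adic period with the Néron period $\Omega_E^+$ and the Manin constant hypothesis \eqref{eq:manin} — will be the technical heart of the argument.
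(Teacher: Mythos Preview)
Your proposal has the right large-scale shape---specialize a $\Lambda$-adic object at a character $\alpha$ close to $1$, use the determinant formulation of the Main Conjecture to compute a $p$-divisibility, and extract ${\rm ord}_p({\rm Tam}_E)$---but two of your steps are genuinely wrong, and a third is circular.

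First, the paper does \emph{not} prove Conjecture~\ref{conj:refined-Kur} unconditionally: Theorem~\ref{thmintro-Kur} shows it is \emph{equivalent} to Conjecture~\ref{conj:IMC-det}, and Theorem~\ref{corintro:kur-cases} then deduces it only when the Main Conjecture is known. Your second paragraph reads as if the Main Conjecture is an input ``established earlier in the paper''; only its determinantal \emph{reformulation} is established, not the conjecture itself. So at best you are sketching the implication ${\rm (ii)}\Rightarrow{\rm (i)}$ of Theorem~\ref{thmintro-Kur}.

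Second, and more seriously, your Step~1 claim that $\mathscr{M}_\infty(\boldsymbol{\delta})$ equals the divisibility index of the twisted bottom class $\kappa_{1,\Lambda}^{\rm Kato}(\alpha)$ is false. In the paper these differ by two contributions: the length of the strict Selmer group $\rH^1_{\Fcal_{\rm str}}(\Q,T(\alpha)^*)$, and the local factor $t={\rm ord}_p(\#E(\Q_p)[p^\infty])$. The argument works because \emph{both} contributions appear twice and cancel. Concretely, Mazur--Rubin's structure theorem (Theorem~\ref{thm:str-Kato}) gives
\[
{\rm ind}_p(\kappa_{1,\Lambda}^{\rm Kato}(\alpha))-\mathscr{M}_\infty(\boldsymbol{\kappa}_\Lambda^{\rm Kato}(\alpha))={\rm length}_{\Z_p}\rH^1_{\Fcal_{\rm str}}(\Q,T(\alpha)^*),
\]
while the descent from the Main Conjecture (Corollary~\ref{cor:index-m-Q}) gives
\[
{\rm ind}_p(\kappa_{1,\Lambda}^{\rm Kato}(\alpha))=t+{\rm length}_{\Z_p}\rH^1_{\Fcal_{\rm str}}(\Q,T(\alpha)^*)+{\rm ord}_p({\rm Tam}_E).
\]
Subtracting yields $\mathscr{M}_\infty(\boldsymbol{\kappa}_\Lambda^{\rm Kato}(\alpha))=t+{\rm ord}_p({\rm Tam}_E)$. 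One then passes to $\mathscr{M}_\infty(\boldsymbol{\kappa}^{\rm Kato})$ via a rigidity argument (Proposition~\ref{prop:mccullen}) and the congruence of Lemma~\ref{lem:congr-Kato}, and finally to $\mathscr{M}_\infty(\boldsymbol{\delta})$ via the derived explicit reciprocity law (Theorem~\ref{thm:der-ERL}), which introduces \emph{exactly} a factor $p^t$---so $t$ cancels and one is left with ${\rm ord}_p({\rm Tam}_E)$. Your assertion that the local determinant at $p$ is a unit is thus incorrect: it contributes $t$, and it is the matching $p^t$ in the reciprocity law that makes it disappear. You are also missing the Mazur--Rubin input entirely; without it there is no mechanism to relate the index of a single class $\kappa_{1,\Lambda}^{\rm Kato}(\alpha)$ to the asymptotic quantity $\mathscr{M}_\infty$.

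Finally, your Step~3 (invoking \eqref{eq:str-Selmer} and a BSD comparison) is both unnecessary and circular: the formula \eqref{eq:str-Selmer} presupposes $\boldsymbol{\delta}\neq\{0\}$, which is part of what you are trying to prove, and BSD is not used anywhere in the paper's argument.
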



\subsubsection{Results} 



Our main result towards the refined Kurihara's conjecture is the following. Let $\Q_\infty/\Q$ denote the cyclotomic $\Z_p$-extension.


\begin{thmintro}
\label{thmintro-Kur}
Let $p>3$ be a prime such that \eqref{eq:irred} and \eqref{eq:manin} both hold. Then the following are equivalent:
\begin{itemize}
\item[(i)] $\mathscr{M}_{\infty}(\boldsymbol{\delta})={\rm ord}_p({\rm Tam}_E)$, and hence Conjecture~\ref{conj:refined-Kur} holds.
\item[(ii)] The Iwasawa Main Conjecture~\ref{conj:IMC-det} for $\Q_\infty/\Q$ holds.
\end{itemize}
\end{thmintro}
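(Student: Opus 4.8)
The plan is to reduce the equivalence to a comparison of two ``algebraically defined'' invariants attached to $E/\Q$ and the prime $p$. On the one hand, the quantity $\mathscr{M}_\infty(\boldsymbol\delta)$ has an interpretation in terms of $p$-adic $L$-functions: following \cite{kurihara-munster} and \cite{kim}, the $\delta_n$ are coefficients of Mazur--Tate $\theta$-elements, and their ``asymptotic $p$-divisibility'' $\mathscr{M}_\infty(\boldsymbol\delta)$ matches the $\mu$-type invariant of the $\Lambda$-adic object interpolating them, namely (after twisting by a character sufficiently close to the trivial character) the divisibility index of the bottom class of the $\Lambda$-adic Beilinson--Kato Euler system along $\Q_\infty/\Q$. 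On the other hand, the Main Conjecture~\ref{conj:IMC-det}, being phrased in terms of determinants of Selmer complexes, identifies the characteristic ideal of the relevant Selmer module with the ideal generated by that Kato class; specializing near the trivial character and taking $p$-adic valuations then produces exactly $\mathrm{ord}_p(\mathrm{Tam}_E)$ on the Selmer side, by the usual Tamagawa-factor bookkeeping in the formulation of the Main Conjecture (local conditions at primes $\ell\mid N$ contribute the Tamagawa numbers $c_\ell$). So both (i) and (ii) are equivalent to the single equation
\[
\textrm{(divisibility index of the twisted bottom Kato class)}=\mathrm{ord}_p(\mathrm{Tam}_E),
\]
and the theorem follows.

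Concretely, I would proceed in the following steps. First, recall the precise shape of Conjecture~\ref{conj:IMC-det}: it asserts an equality of invertible $\Lambda$-modules $\det_\Lambda \mathbf{R\Gamma}_f(\Q_\infty, T) \cong \Lambda\cdot z$ (or the corresponding statement up to the interpolation of Tamagawa factors), where $z$ is the $\Lambda$-adic Kato class and the Selmer complex uses Greenberg-style local conditions; crucially this formulation is valid at $p$ of any reduction type, which is what lets us drop the ordinary/supersingular dichotomy. Second, twist by a character $\chi$ close enough to the trivial character (in the sense made precise in the body of the paper) so that the specialization at $\chi$ of both sides is controlled: the Selmer-complex side specializes to a complex whose determinant has $p$-valuation $\mathrm{ord}_p(\mathrm{Tam}_E)$ plus a contribution from $\sha_{\mathrm{BK}}$ and the $\Q_\infty/\Q$-cohomology, while the Euler-system side specializes to the bottom class whose divisibility index is, by the structure theory of \cite{kurihara-munster, kim} recalled in the excerpt, precisely $\mathscr{M}_\varrho$ minus the Selmer contribution. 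Third, subtract: the $\sha_{\mathrm{BK}}$ and cohomology contributions cancel by the exact formula \eqref{eq:str-Selmer}, leaving $\mathscr{M}_\infty(\boldsymbol\delta)$ on one side and $\mathrm{ord}_p(\mathrm{Tam}_E)$ on the other, with the difference being exactly the discrepancy between the two sides of Conjecture~\ref{conj:IMC-det}. Hence (i) $\iff$ (ii).

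The main obstacle, and the technical heart of the argument, is the computation of the $p$-divisibility index of the twisted bottom Kato class in Step 2 --- this is the ``key innovation'' advertised in the abstract. One must show that after twisting by $\chi$ close to trivial, the divisibility index of the bottom class of the $\Lambda$-adic Euler system equals $\mathscr{M}_\varrho$ (the minimal divisibility of the $\delta_n$ at the order of vanishing $\varrho$), and simultaneously that it is computed on the Selmer side by the determinant of the specialized Selmer complex. The subtlety is that the naive specialization map on Euler systems loses information at the characters where the $L$-value vanishes, so one cannot simply set $\chi$ to be trivial; the reformulation in terms of Selmer complexes is what makes the bookkeeping exact, because determinants behave well under derived base change whereas cohomology groups do not. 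I expect this step to require: (a) a careful analysis of the control theorem for the Selmer complex along the cyclotomic tower, isolating the Tamagawa contribution from the local conditions at $\ell \mid N$; (b) matching the leading coefficient of the specialized Kato class with the Mazur--Tate $\theta$-elements via the explicit reciprocity law / comparison with modular symbols as in \cite{kim}; and (c) controlling the error terms coming from the failure of $\chi$ to be exactly trivial, which is where the hypothesis ``$\chi$ sufficiently close to the trivial character'' is used. Everything else --- the root-number considerations forcing $(-1)^{\nu(n)}=\epsilon$, the monotonicity $\mathscr{M}_r \geq \mathscr{M}_{r+2}$, and the passage from $\sha_{\mathrm{BK}}$ to $\sha[p^\infty]$ in rank $0$ --- is either recalled in the excerpt or is standard, and plays only a supporting role.
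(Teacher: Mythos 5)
Your high-level strategy is the same as the paper's: twist the $\Lambda$-adic Kato class by a character $\alpha\equiv 1\;({\rm mod}\,p^m)$ with $m\gg 0$ so that $\kappa_{1,\Lambda}^{\rm Kato}(\alpha)\neq 0$, use the determinantal form of the Main Conjecture to compute the $p$-divisibility index of the specialized class, relate that index to $\mathscr{M}_\infty$ via a structure theorem, and translate to $\mathscr{M}_\infty(\boldsymbol\delta)$ via the derived explicit reciprocity law of Kim. But several of the intermediate claims as you state them are incorrect or reference the wrong ingredient, and these gaps are exactly where the technical work of the paper lives.

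First, your claim that the divisibility index of the twisted bottom class equals $\mathscr{M}_\varrho$ is wrong, and the claim that (i) and (ii) both reduce to ``$\mathrm{ind}_p$ (twisted bottom Kato class) $=\mathrm{ord}_p(\mathrm{Tam}_E)$'' is also wrong. What the paper actually proves (Corollary~\ref{cor:index-m-Q}) is that, under the Main Conjecture, $\mathrm{ind}_p(\kappa_{1,\Lambda}^{\rm Kato}(\alpha)) = \mathrm{ord}_p(\#E(\Q_p)[p^\infty]) + \mathrm{ord}_p(\#\rH^1_{\Fcal_{\rm str}}(\Q,T(\alpha)^*)) + \mathrm{ord}_p(\mathrm{Tam}_E)$; it is not $\mathrm{ord}_p(\mathrm{Tam}_E)$ alone. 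The index is then related to $\mathscr{M}_\infty(\boldsymbol{\kappa}_\Lambda^{\rm Kato}(\alpha))$ (not $\mathscr{M}_\varrho$) by the \emph{twisted} Mazur--Rubin structure theorem (Theorem~\ref{thm:str-Kato}), and the $\rH^1_{\Fcal_{\rm str}}$ term cancels between these two identities. You instead invoke the \emph{untwisted} Kurihara/Kim exact formula~\eqref{eq:str-Selmer} to achieve the cancellation; that formula involves the untwisted $\sha_{\rm BK}(E[p^\infty]/\Q)$ and the invariants $\mathscr{M}_\varrho$, $\mathscr{M}_\infty(\boldsymbol\delta)$ of the untwisted collection $\boldsymbol\delta$, which are different objects from the twisted Selmer group $\rH^1_{\Fcal_{\rm str}}(\Q,T(\alpha)^*)$ that appears at $\alpha$, so the cancellation as you describe it does not go through.

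Second, you omit the passage from the $\alpha$-twisted invariant $\mathscr{M}_\infty(\boldsymbol{\kappa}_\Lambda^{\rm Kato}(\alpha))$ back to $\mathscr{M}_\infty(\boldsymbol\delta)$. This requires the rigidity result (Proposition~\ref{prop:mccullen}), which shows that restricting the index set to $\mathcal{N}^{(m)}$ does not change $\mathscr{M}_\infty$, together with the congruence $\kappa_{n,\Lambda}^{\rm Kato}(\alpha)\equiv\kappa_n^{\rm Kato}\;({\rm mod}\,p^m)$ (Lemma~\ref{lem:congr-Kato}), and finally the derived explicit reciprocity law (Theorem~\ref{thm:der-ERL}), which introduces the factor $p^t=\#E(\Q_p)[p^\infty]$; this factor precisely cancels the $\mathrm{ord}_p(\#E(\Q_p)[p^\infty])$ term from the descent computation. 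Without the rigidity step one cannot pass from the restricted $\mathscr{M}_\infty^{(m)}$ to the full $\mathscr{M}_\infty$; without tracking the $\#E(\Q_p)[p^\infty]$ factor through both the descent and the reciprocity law the final numerology does not close up to $\mathrm{ord}_p(\mathrm{Tam}_E)$. So while your outline identifies the right tools, the chain of equalities \eqref{eq:chain} in the paper is the content that actually makes the argument work, and your proposal leaves it entirely unaddressed.
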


In particular, in the case where $p\nmid{\rm Tam}_E$ we deduce:

\begin{corintro}\label{corintro:refined-Kur}
Let $p>3$ be a prime such that \eqref{eq:irred} and \eqref{eq:manin} both hold. 
If $p\nmid{\rm Tam}_E$, then the following are equivalent:
\begin{itemize}
\item[(i)] There exists $n\in \mathcal{N}$ such that $\bar{\delta}_{n}\neq 0$, and hence Conjecture~\ref{conj:Kurihara} holds.
\item[(ii)] The Iwasawa Main Conjecture~\ref{conj:IMC-det} for $\Q_\infty/\Q$ holds.
\end{itemize}
\end{corintro}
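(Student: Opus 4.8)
The plan is to obtain Corollary~\ref{corintro:refined-Kur} as a formal consequence of Theorem~\ref{thmintro-Kur}, the only extra ingredient being the elementary observation that, under the hypothesis $p\nmid{\rm Tam}_E$, the equality $\mathscr{M}_\infty(\boldsymbol{\delta})={\rm ord}_p({\rm Tam}_E)=0$ is equivalent to the existence of some $n\in\mathcal{N}$ with $\bar{\delta}_n\neq 0$. Thus essentially all of the arithmetic content is already packaged in Theorem~\ref{thmintro-Kur}, and what remains is a combinatorial unwinding of the definitions of $\mathscr{M}(n)$, $\mathscr{M}_r$, and $\mathscr{M}_\infty(\boldsymbol{\delta})$.

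First I would record that for each $n\in\mathcal{N}$ one has $\mathscr{M}(n)=0$ if and only if $\bar{\delta}_n\neq 0$. For $n\neq 1$ this is because every prime $\ell\mid n$ lies in $\mathcal{L}$, so $p\mid\ell-1$ and $p\mid a_\ell-\ell-1$, whence $I_\ell\subseteq p\Z$ and therefore $I_n=\sum_{\ell\mid n}I_\ell\subseteq p\Z$; consequently the reduction map $\Z/I_n\to\Ff_p$ is well defined, and $\delta_n\notin p\,\Z/I_n$ precisely when its image $\bar{\delta}_n$ is nonzero, which by the definition of $\mathscr{M}(n)$ is the condition $\mathscr{M}(n)=0$ (the case $n=1$ being handled in the same way, using that $\delta_1=L(E,1)/\Omega_E^+$ is $p$-integral). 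Next, since $\mathscr{M}_r\geq\mathscr{M}_{r+2}\geq 0$ and each $\mathscr{M}_r$ is a non-negative integer or $\infty$, the limit $\mathscr{M}_\infty(\boldsymbol{\delta})=\lim_{(-1)^r=\epsilon}\mathscr{M}_r$ equals $0$ if and only if $\mathscr{M}_r=0$ for some $r$ (equivalently for all sufficiently large $r$) with $(-1)^r=\epsilon$; and $\mathscr{M}_r=0$ means exactly that $\mathscr{M}(n)=0$ for some $n$ with $\nu(n)=r$. Finally, $\mathscr{M}(n)=0$ forces $\delta_n\neq 0$, which by the functional equation of the Mazur--Tate $\theta$-element recalled above forces $(-1)^{\nu(n)}=\epsilon$, so the parity constraint in the definition of $\mathscr{M}_\infty(\boldsymbol{\delta})$ is automatically satisfied by any such $r$ (and when $\boldsymbol{\delta}=\{0\}$ both conditions fail trivially). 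Assembling these remarks yields
\[
\mathscr{M}_\infty(\boldsymbol{\delta})=0\ \Longleftrightarrow\ \mathscr{M}_r=0\ \text{for some }r\geq 0\ \Longleftrightarrow\ \bar{\delta}_n\neq 0\ \text{for some }n\in\mathcal{N}.
\]

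With this equivalence the corollary follows at once. Suppose $p\nmid{\rm Tam}_E$, so that ${\rm ord}_p({\rm Tam}_E)=0$; note $p>3$ in particular ensures $p$ is odd, so Conjecture~\ref{conj:Kurihara} applies. By Theorem~\ref{thmintro-Kur}, statement (ii) is equivalent to $\mathscr{M}_\infty(\boldsymbol{\delta})={\rm ord}_p({\rm Tam}_E)=0$, and by the displayed equivalence this holds if and only if there is some $n\in\mathcal{N}$ with $\bar{\delta}_n\neq 0$, i.e. if and only if (i) holds; moreover in this case (i) is \emph{verbatim} the assertion of Conjecture~\ref{conj:Kurihara}, so the ``and hence'' clause is automatic. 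I do not anticipate any genuine obstacle in this argument: all of the depth sits in Theorem~\ref{thmintro-Kur}, and the only points needing (minor) care are the verification that $I_n\subseteq p\Z$, so that reduction modulo $p$ of $\delta_n$ is meaningful, and the bookkeeping of parities ensuring that the index $r$ in $\mathscr{M}_\infty(\boldsymbol{\delta})$ runs over the residue class with $(-1)^r=\epsilon$.
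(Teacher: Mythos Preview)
Your proposal is correct and follows exactly the paper's own approach: the paper's proof is a single sentence invoking Theorem~\ref{thmintro-Kur} together with the observation that $\mathscr{M}_\infty(\boldsymbol{\delta})=0$ if and only if some $\delta_n$ is $p$-indivisible. Your write-up simply supplies the details behind that observation (the inclusion $I_n\subseteq p\Z$ and the parity bookkeeping), which the paper leaves implicit.
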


\begin{proof}
This is clear from Theorem~\ref{thmintro-Kur}, noting that   $\mathscr{M}_\infty(\boldsymbol{\delta})=0$ if and only if there exists $n\in\mathcal{N}$ such that $\delta_n$ is $p$-indivisible.
\end{proof}

The Iwasawa Main Conjecture~\ref{conj:IMC-det} for $\Q_\infty/\Q$ in the above results is a reformulation of 
\cite[Conj.~12.10]{kato-euler-systems} in terms of determinants of arithmetic complexes. 
Thus combined with known results on the latter, we obtain a proof of Kurihara's conjecture and its refinement in many cases.

\begin{thmintro}
[Refined Kurihara's conjecture]\label{corintro:kur-cases}
Let $p>3$ be a prime such that \eqref{eq:irred} and \eqref{eq:manin} both hold. Then Conjecture~\ref{conj:refined-Kur} (and hence also Conjecture~\ref{conj:Kurihara}) 
holds in each of the following cases:
\begin{itemize}
\item[$\circ$] $p$ is good ordinary for $E$. 
\item[$\circ$] $p$ is good supersingular for $E$ and $N$ is squarefree.
\end{itemize}
\end{thmintro}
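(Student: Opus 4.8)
The plan is to deduce Theorem~\ref{corintro:kur-cases} from Theorem~\ref{thmintro-Kur} by verifying, in each of the two listed cases, that the Iwasawa Main Conjecture~\ref{conj:IMC-det} for $\Q_\infty/\Q$ is known. By Theorem~\ref{thmintro-Kur}, condition (ii) (the IMC) implies condition (i) ($\mathscr{M}_\infty(\boldsymbol{\delta})={\rm ord}_p({\rm Tam}_E)$), which is precisely Conjecture~\ref{conj:refined-Kur}; and Conjecture~\ref{conj:refined-Kur} in turn implies Conjecture~\ref{conj:Kurihara} in the case $p\nmid{\rm Tam}_E$ (since then $\mathscr{M}_\infty(\boldsymbol\delta)=0$ forces some $\delta_n$ to be $p$-indivisible, as noted in the proof of Corollary~\ref{corintro:refined-Kur}), and more generally the refined statement subsumes the existence statement. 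So the entire content of the theorem is the input that the determinant-form IMC holds under the stated reduction hypotheses together with \eqref{eq:irred} and \eqref{eq:manin}.

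For the good ordinary case, I would invoke the work on the cyclotomic Iwasawa Main Conjecture for elliptic curves: under the surjectivity hypothesis \eqref{eq:irred}, one divisibility is Kato's \cite{kato-euler-systems} (the Euler system divisibility in \cite[Conj.~12.10]{kato-euler-systems}), and the reverse divisibility follows from Skinner--Urban, giving the full equality of characteristic ideals, hence (after checking the dictionary) the determinant reformulation~\ref{conj:IMC-det}. One should be slightly careful that the determinant formulation is an equality of modules/complexes rather than just of characteristic ideals, so I would cite the precise place earlier in the paper (or in \cite{BCGS}) where Conjecture~\ref{conj:IMC-det} is shown to be equivalent to \cite[Conj.~12.10]{kato-euler-systems} and observe that, given the equality of characteristic ideals together with the torsion-freeness/no-pseudo-null-submodule properties guaranteed by \eqref{eq:irred}, the two formulations coincide. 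For the good supersingular case with $N$ squarefree, I would appeal to the corresponding supersingular Iwasawa Main Conjecture — the plus/minus or $\pm$-Selmer group framework of Kobayashi, with the Euler system divisibility from Kato and the converse divisibility now available in this generality (work of Wan, and refinements handling the squarefree-conductor hypothesis) — again translated into the determinant language via the equivalence recorded earlier.

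The main obstacle is purely bookkeeping rather than mathematical: making sure that the hypotheses under which the cited Main Conjecture results are proved are implied by our running hypotheses \eqref{eq:irred}, \eqref{eq:manin}, $p>3$, and (in the supersingular case) $N$ squarefree — in particular that no extra ramification or "big image at $p$" conditions beyond \eqref{eq:irred} are secretly needed, and that the Manin constant hypothesis \eqref{eq:manin} correctly normalizes the period so that the determinant-form IMC we stated is literally the one proved in the literature. I would therefore organize the proof as: (1) recall from the earlier sections the precise statement of Conjecture~\ref{conj:IMC-det} and its equivalence with \cite[Conj.~12.10]{kato-euler-systems}; (2) in the ordinary case, cite the two-sided divisibility and check hypotheses; (3) in the supersingular case, do the same with the $\pm$-theory, using squarefreeness of $N$ to place ourselves within the scope of the available converse-divisibility results; (4) conclude by Theorem~\ref{thmintro-Kur} in each case. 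No new computation is required beyond this verification.
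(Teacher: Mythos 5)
Your proposal is correct and follows the same logical route as the paper: both reduce the statement to verifying the determinantal Iwasawa Main Conjecture~\ref{conj:IMC-det} in each of the two cases and then apply Theorem~\ref{thmintro-Kur}. The paper also records, as you anticipate, that Conjecture~\ref{conj:IMC-det} is equivalent to \cite[Conj.~12.10]{kato-euler-systems} (via Proposition~\ref{prop:equiv-cyc}), and that the latter is equivalent to the Mazur--Swinnerton-Dyer (resp.\ Kobayashi) formulation, so that known results on the classical main conjecture suffice.

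The only substantive difference is in which instances of the main conjecture you cite as inputs, and here your choices fall slightly short of the generality claimed. In the good ordinary case, Skinner--Urban's theorem requires (beyond residual irreducibility) a ramification hypothesis at a prime $q\,\|\,N$ with $\bar\rho$ ramified at $q$, which is not implied by \eqref{eq:irred} and \eqref{eq:manin} alone; the paper instead cites \cite{wan-hilbert,BCS}, which remove that auxiliary hypothesis. In the good supersingular case, the paper's inputs are \cite{CLW,BSTW} (not just Kobayashi's formulation plus Wan), which are the references covering the full range of squarefree $N$ under the standing hypotheses. Your concern about the ``determinant vs.\ characteristic ideal'' dictionary is addressed by Proposition~\ref{prop:equiv-cyc} in the paper and requires no extra pseudo-nullity input beyond what is established there. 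In short, the skeleton of your argument is exactly the paper's; you should tighten the references so that the hypotheses of the cited IMC theorems are genuinely subsumed by \eqref{eq:irred}, \eqref{eq:manin}, $p>3$, and (in the supersingular case) $N$ squarefree.
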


\begin{proof}
In the ordinary (resp. supersingular) case, the Iwasawa Main Conjecture for $\Q_\infty/\Q$ in the formulation of Mazur--Swinnerton-Dyer  \cite{mazur-towers,M-SwD} (resp. Kobayashi \cite{kobayashi-ss}) is proved in \cite{wan-hilbert,BCS} (resp. \cite{CLW,BSTW}). Each of these main  conjectures is known to be equivalent to \cite[Conj.~12.10]{kato-euler-systems} and hence to the Iwasawa Main Conjecture~\ref{conj:IMC-det} for $\Q_\infty/\Q$, so the result follows from Theorem~\ref{thmintro-Kur}.
\end{proof}

\subsubsection{Kolyvagin conjectures}\label{subsec:Kolyvagin}

In the 
second part of this paper, we prove an analogue of Theorem~\ref{thmintro-Kur} for W.~Zhang's refinement in \cite{zhang-CDM} of Kolyvagin's conjecture. 

For the statements, let $E/\Q$ be an elliptic curve of conductor $N$, and let $K$ satisfying the \emph{Heegner hypothesis}:
\begin{equation}\label{eq:Heeg}
\textrm{every prime $\ell\mid N$ splits in $K$,}\tag{Heeg}
\end{equation} 
and with discriminant $-D_K<0$ such that
\begin{equation}\label{eq:intro-disc}
\textrm{$D_K$ is odd and $D_K\neq 3$.}\tag{disc}
\end{equation}
Let $p$ be an odd prime such that \eqref{eq:irred} and \eqref{eq:manin} holds. By \cite[Lem.~4.3]{gross-durham}, it follows that $E(K[n])[p]=0$ for all $n\geq 1$, where $K[n]$ denotes the ring class field of $K$ of conductor $n$.

Denote by $\mathcal{L}_{\rm Heeg}$ the set of primes
\[
\mathcal{L}_{\rm Heeg}:=\{\ell\;\colon\;\textrm{$\ell$ is inert in $K$,\; $\ell\nmid N$,\; and\; $a_\ell\equiv \ell+1\equiv 0\;({\rm mod}\,p)$}\},
\] 
and let $\mathcal{N}_{\rm Heeg}$ be the collection of all squarefree products of primes $\ell\in\mathcal{L}_{\rm Heeg}$, with $1\in\mathcal{N}_{\rm Heeg}$ by convention.  For each prime $\ell\in\mathcal{L}_{\rm Heeg}$, let $I_\ell\subset\Z$ be defined by
\[
I_\ell:=(\ell+1,a_\ell)\Z_p\cap\Z,
\]
and for each $n\in\mathcal{N}_{\rm Heeg}$ put $I_n=\sum_{\ell\mid n}I_\ell$ if $n>1$ and $I_n=0$ otherwise. 

Let $T=T_pE$ denote the $p$-adic Tate module of $E$. For each $n\in\mathcal{N}_{\rm Heeg}$, from the Kummer images of Heegner points on $E$ of conductor $n$ associated to a fixed modular parametrization $\phi:X_0(N)\rightarrow E$,  the Kolyvagin derivative process yields a construction of classes
\[
\kappa_n^{\rm Heeg}\in\rH^1(K,T/I_nT)
\]
forming a \emph{Kolyvagin system} $\boldsymbol{\kappa}^{\rm Heeg}=\{\kappa_n^{\rm Heeg}\}_{n}\in\mathbf{KS}(T,\mathcal{F},\mathcal{L}_{\rm Heeg})$ in the sense of \cite{howard} for the Selmer structure $\mathcal{F}$ of Theorem~1.6.5 in \emph{op.\,cit.} (see also \cite[\S{1.7}]{howard}). In particular, $\kappa_1\in\rH^1_\Fcal(K,T)=\varprojlim_m{\rm Sel}_{p^m}(E/K)$ is the Kummer image of the Heegner point $y_K\in E(K)$ in the Gross--Zagier formula \cite{grosszagier}.

The classes $\kappa_n^{\rm Heeg}$ land in the \emph{$n$-transverse Selmer group} $\rH^1_{\Fcal(n)}(K,T/I_nT)\subset\rH^1(K,T/I_nT)$ (see \cite[Lem.~1.7.3]{howard} and \S\ref{subsec:Kolyvagin} below for a review), and 
setting $\mathscr{M}(n)$ to be $\infty$ if $\kappa_n^{\rm Heeg}=0$ and the maximum $\mathscr{M}\geq 0$ such that $\kappa_n^{\rm Heeg}\in p^\mathscr{M}\rH^1_{\Fcal(n)}(K,T/I_nT)$ otherwise, one defines $\mathscr{M}_{r}:=\min\{\mathscr{M}(n)\;\colon\;\nu(n)=r\}$. In this case, Kolyvagin showed $\mathscr{M}_r\geq\mathscr{M}_{r+1}$ for all $r\geq 0$, and we put 
$$\mathscr{M}_\infty(\boldsymbol{\kappa}^{\rm Heeg}):=\lim_{r\to\infty}\mathscr{M}_r\in\Z_{\geq 0}\cup\{\infty\}$$
similarly as in $\S\ref{subsec:Kurihara}$. 

In \cite{kolystructure}, Kolyvagin conjectured that $\boldsymbol{\kappa}^{\rm Heeg}\neq\{0\}$ (equivalently,  $\mathscr{M}_\infty(\boldsymbol{\kappa}^{\rm Heeg})<\infty$). The refined version of this conjecture by W.~Zhang \cite{zhang-CDM} further predicts an exact formula for $\mathscr{M}_\infty(\boldsymbol{\kappa}^{\rm Heeg})$ in terms of arithmetic invariants of $E$. As explained in [\emph{op.\,cit.}, \S{4.4}] (see also \cite[\S{1}]{jetchev}), this is motivated by combining in the case that ${\rm ord}_{s=1}L(E/K,s)=1$:
\begin{itemize}
\item The Gross--Zagier formula for $y_K\in E(K)$, whereby the Birch--Swinnerton-Dyer formula for $L'(E/K,1)$ yields the prediction
\[
2\cdot{\rm ord}_p\bigl([E(K):\Z y_K]\bigr)\overset{?}={\rm ord}_p\bigl(\#\sha(E/K)[p^\infty]\big)+2\cdot{\rm ord}_p\bigl({\rm Tam}_E\bigr).
\]
\item Kolyvagin's structure theorem \cite{kolystructure}, 
which in the case ${\rm ord}_{s=1}L(E/K,s)=1$ implies
\begin{align*}
{\rm ord}_p\bigl(\#\sha(E/K)[p^\infty]\bigr)
&=2\cdot\bigl({\rm ord}_p([E(K):\Z y_K])-\mathscr{M}_\infty(\boldsymbol{\kappa}^{\rm Heeg})\bigr).
\end{align*}
\end{itemize}

The following is \cite[Conj.~4.5]{zhang-CDM}, which allows ${\rm ord}_{s=1}L(E/K,s)>1$. 

\begin{conjintro}[Refined Kolyvagin's  conjecture]\label{conj:refined-Kol}
Let $p$ be an odd prime such that \eqref{eq:irred} and \eqref{eq:manin} both hold. Then
\[
\mathscr{M}_\infty(\boldsymbol{\kappa}^{\rm Heeg})={\rm ord}_p({\rm Tam}_E).
\]
\end{conjintro}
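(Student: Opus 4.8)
The strategy for attacking Conjecture~\ref{conj:refined-Kol} mirrors Theorem~\ref{thmintro-Kur}: we aim to show that the equality $\mathscr{M}_\infty(\boldsymbol{\kappa}^{\rm Heeg})={\rm ord}_p({\rm Tam}_E)$ is \emph{equivalent} to the anticyclotomic Iwasawa Main Conjecture for $K_\infty/K$ --- formulated, as in Conjecture~\ref{conj:IMC-det}, in terms of the determinant of a Selmer complex for $T$ over $\Lambda:=\Zp[[\Gal(K_\infty/K)]]$, where $K_\infty/K$ is the anticyclotomic $\Zp$-extension --- and then to deduce the refined Kolyvagin conjecture in every case where that main conjecture is known (in particular for $p$ unramified in $K$ with $E$ ordinary at $p$, and in the inert/supersingular situations accessible to current anticyclotomic Iwasawa theory).

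First I would realize $\boldsymbol{\kappa}^{\rm Heeg}$ as the bottom layer of a $\Lambda$-adic Kolyvagin system. Via Howard's big Heegner point construction one obtains a class $\mathbf{z}\in\rH^1_{\Fcal}(K,\mathbb{T})$, where $\mathbb{T}=T\,\widehat{\otimes}_{\Zp}\Lambda$ carries the tautological anticyclotomic action, whose specializations along $K_\infty/K$ recover the Kummer images of the Heegner points of $p$-power conductor, and hence, after the Kolyvagin derivative process, the classes $\kappa_n^{\rm Heeg}$. The core of the argument is the \emph{descent step}:
\[
\mathscr{M}_\infty(\boldsymbol{\kappa}^{\rm Heeg})=\ord_p(\mathbf{z}_\chi)
\]
for every continuous character $\chi$ of $\Gal(K_\infty/K)$ sufficiently close to the trivial character, where $\ord_p(\mathbf{z}_\chi)$ denotes the $p$-divisibility index of the $\chi$-specialization of $\mathbf{z}$. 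This would be proved by adapting the new descent argument of \cite{BCGS}: the finite-level derivative classes $\{\kappa_n^{\rm Heeg}\}_n$ and the $\Lambda$-adic class $\mathbf{z}$ are governed by one and the same Selmer module in the Mazur--Rubin--Howard theory of Kolyvagin systems, and a congruence-and-passage-to-the-limit argument shows that $\min_{\nu(n)=r}\ord_p(\kappa_n^{\rm Heeg}\bmod I_n)$ stabilizes, as $r\to\infty$, to the divisibility index of the single twisted bottom class $\mathbf{z}_\chi$, once $\chi$ is trivial modulo a sufficiently large power of $p$. One works with $\chi$ close to --- but distinct from --- the trivial character: closeness ensures that the local conditions, and in particular the Tamagawa contributions at the primes $\ell\mid N$, are unchanged, while $\boldsymbol{\kappa}^{\rm Heeg}\neq\{0\}$ (Kolyvagin's conjecture, known in our setting) forces $\mathbf{z}\neq 0$ and hence $\mathbf{z}_\chi\neq 0$ for all but finitely many $\chi$.

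It then remains to compute $\ord_p(\mathbf{z}_\chi)$ for $\chi$ near the trivial character, which is where the key innovation enters. Using the reformulation of the anticyclotomic main conjecture as an identity of determinants of the Selmer complex $\RG_{\Fcal}(K,\mathbb{T})$ (in the formalism of Nekov\'a\v{r}), the index $\ord_p(\mathbf{z}_\chi)$ splits as the sum of a ``leading term'' measured by that determinant --- equivalently by the characteristic ideal of the relevant Shafarevich/Selmer module --- which, by continuity of characteristic ideals near $\chi=\mathbf{1}$, vanishes for $\chi$ sufficiently close to trivial \emph{precisely when the main conjecture holds as an equality}, plus local correction terms at the bad primes that, under \eqref{eq:Heeg}, \eqref{eq:irred} and \eqref{eq:manin}, sum to $\ord_p({\rm Tam}_E)$, the prime(s) above $p$ contributing nothing after the twist. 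Combined with the descent step this yields the equivalence, and feeding in the known cases of the anticyclotomic main conjecture gives Conjecture~\ref{conj:refined-Kol} in those cases.

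The step I expect to be the real obstacle --- the place where work beyond a transcription of \cite{BCGS} is needed --- is the $p$-local analysis when $p$ is \emph{inert} in $K$. The anticyclotomic tower is then totally ramified at the prime above $p$, the local Iwasawa cohomology there is not the classical ordinary module, and one must build the Selmer-complex reformulation of the main conjecture --- with the requisite perfectness and duality --- using the appropriate signed (Kobayashi-type ``$\pm$'') or Frobenius-stable local condition, and then match it with the local behaviour of Howard's big Heegner point at that prime, so that the separation of $\ord_p(\mathbf{z}_\chi)$ into a ``main conjecture'' part and a Tamagawa part stays clean; the split case is comparatively routine. The remaining ingredients --- the Kolyvagin-system formalism, the congruence/limit descent, and the determinant bookkeeping --- transplant from the cyclotomic setting of Theorem~\ref{thmintro-Kur} with only formal modifications.
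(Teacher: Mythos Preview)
Your overall strategy --- reduce to an equivalence with the anticyclotomic Iwasawa Main Conjecture via a $\Lambda$-adic Heegner class and then descend --- is indeed the paper's strategy for Theorem~\ref{thmintro-Kol}. However, your ``descent step''
\[
\mathscr{M}_\infty(\boldsymbol{\kappa}^{\rm Heeg})=\ord_p(\mathbf{z}_\chi)
\]
is false as written, and this is not a cosmetic issue: it collapses two genuinely different inputs into one and makes the argument fail. The index $\ord_p(\mathbf{z}_\chi)={\rm ind}_p(\kappa_{1,\Lambda}^{\rm Heeg}(\alpha))$ is the divisibility of the \emph{base} class, whereas $\mathscr{M}_\infty$ is the limit over \emph{all} $n$. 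These differ by the length of the finite piece $S_\alpha$ of the dual Selmer group: by the rank-one Kolyvagin structure theorem (Theorem~\ref{thm:kol-control}),
\[
{\rm ind}_p(\kappa_{1,\Lambda}^{\rm Heeg}(\alpha))-\mathscr{M}_\infty(\boldsymbol{\kappa}_\Lambda^{\rm Heeg}(\alpha))={\rm length}_{\Z_p}(S_\alpha),
\]
and $S_\alpha$ has no reason to vanish for $\alpha$ close to trivial. Correspondingly, your claim that the ``leading term'' in $\ord_p(\mathbf{z}_\chi)$ vanishes under the main conjecture is wrong: what the determinant computation (Corollary~\ref{cor:index-m}) actually gives is
\[
2\cdot{\rm ind}_p(\kappa_{1,\Lambda}^{\rm Heeg}(\alpha))=2\cdot{\rm ord}_p(L_p)+2\cdot{\rm ord}_p({\rm Tam}_E)+{\rm ord}_p(\#\sha_{\rm BK}(T(\alpha)^*/K)),
\]
with $L_p=\prod_{v\mid p}\#\widetilde{E}(\mathbb{F}_v)$, and the $\sha$-term equals $2\cdot{\rm length}_{\Z_p}(S_\alpha)$. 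The point is that this $S_\alpha$-contribution appears on \emph{both} sides and cancels, yielding $\mathscr{M}_\infty(\boldsymbol{\kappa}_\Lambda^{\rm Heeg}(\alpha))={\rm ord}_p(L_p)+{\rm ord}_p({\rm Tam}_E)$; only then does one pass to $\mathscr{M}_\infty(\boldsymbol{\kappa}^{\rm Heeg})$ via the rigidity property and the congruence of Lemma~\ref{lemmacongruence}.

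Two further corrections. First, the primes above $p$ do \emph{not} contribute nothing: they contribute ${\rm ord}_p(L_p)$, and this is exactly cancelled by the $\alpha_p$-stabilization factor relating $\kappa_{n,\Lambda}^{\rm Heeg}(\alpha)$ to $\kappa_n^{\rm Heeg}$ (Lemma~\ref{lemmacongruence}); without tracking this you would get the wrong formula. Second, your identification of the obstacle is off: in the good ordinary case the paper's Selmer-complex approach treats the split and inert cases \emph{uniformly} via the ordinary filtration $T_v^+$ --- no signed/$\pm$ local conditions are needed. The genuine advance over \cite{BCGS} is that the determinant formalism replaces the BDP $p$-adic $L$-function (which was only available for $p$ split), not that one introduces Kobayashi-type conditions.
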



\subsubsection{Results} Our main result toward the refined Kolyvagin conjecture is the following.


\begin{thmintro}[Refined Kolyvagin's conjecture]
\label{thmintro-Kol}
Let $p>3$ be a prime such that \eqref{eq:irred} and \eqref{eq:manin} both hold. If $E$ has good ordinary reduction at $p$ and is unramified in $K$, then the following are equivalent:
\begin{itemize}
\item[(i)] $\mathscr{M}_{\infty}(\boldsymbol{\kappa}^{\rm Heeg})={\rm ord}_p({\rm Tam}_E)$. 
\item[(ii)] The anticyclotomic Iwasawa Main Conjecture~\ref{conj:HPMC-det} holds.
\end{itemize}
In particular, if $p$ splits in $K$, then Conjecture~\ref{conj:refined-Kol} holds.
\end{thmintro}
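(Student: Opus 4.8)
The plan is to carry out the anticyclotomic counterpart of the proof of Theorem~\ref{thmintro-Kur}, replacing Kato's cyclotomic Euler system by Howard's $\Lambda$-adic system of big Heegner points --- available precisely because $p$ is good ordinary and $E[p]$ is $G_K$-irreducible (a consequence of \eqref{eq:irred} and $p>3$) --- and the cyclotomic Main Conjecture~\ref{conj:IMC-det} by the anticyclotomic Main Conjecture~\ref{conj:HPMC-det}. Write $\Gamma=\Gal(K_\infty/K)$, $\Lambda=\Z_p[[\Gamma]]$, and let $\mathbb{T}=T\,\hat{\otimes}_{\Z_p}\Lambda$ carry the tautological $\Gamma$-action. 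Let $\boldsymbol{\kappa}^{\Lambda}=\{\kappa^{\Lambda}_n\}$ be the $\Lambda$-adic Kolyvagin system of \cite{howard} built from big Heegner points, whose specialization at the trivial character is $\boldsymbol{\kappa}^{\rm Heeg}$ and with bottom class $\kappa_1^\Lambda$; writing $\widetilde{R\Gamma}_f(K,-)$ for the Selmer complex with the ordinary local condition at $p$ and the unramified condition elsewhere, one has $\widetilde{\rH}^1_f(K,\mathbb{T})$ of $\Lambda$-rank one (by \cite{howard}) with $\widetilde{\rH}^2_f(K,\mathbb{T})$ being $\Lambda$-torsion. For a finite-order character $\chi$ of $\Gamma$ put $T_\chi=T\otimes\chi$, $\mathfrak{p}_\chi=\ker(\Lambda\to\Z_p[\chi])$, and let $\kappa_1^{\Lambda,\chi}$ be the image of $\kappa_1^\Lambda$ in $\widetilde{\rH}^1_f(K,T_\chi)$. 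The argument has two steps.

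\emph{Step 1 (reduction to a $\Lambda$-adic index).} I would first show that, for every character $\chi$ of sufficiently large $p$-power conductor,
\[
\mathscr{M}_\infty(\boldsymbol{\kappa}^{\rm Heeg})=\ind_{\Z_p}\bigl(\kappa_1^{\Lambda,\chi}\bigr):=\length_{\Z_p}\bigl(\widetilde{\rH}^1_f(K,T_\chi)\big/\Z_p[\chi]\cdot\kappa_1^{\Lambda,\chi}\bigr).
\]
This is the descent computation of \cite{BCGS}: once $\chi$ is ramified enough the Cornut--Vatsal bound is attained, so $\widetilde{\rH}^1_f(K,T_\chi)$ is free of rank one over $\Z_p[\chi]$ with $\widetilde{\rH}^2_f(K,T_\chi)$ finite; the structure theory of Kolyvagin systems over $\Z_p[\chi]$ (\cite{howard}, after Mazur--Rubin) then identifies $\mathscr{M}_\infty$ of the $\chi$-specialized system with $\ind_{\Z_p}(\kappa_1^{\Lambda,\chi})$, while a direct comparison of the finite/singular conditions for $T$ and $T_\chi$ gives $\mathscr{M}_\infty(\boldsymbol{\kappa}^{\rm Heeg})=\mathscr{M}_\infty(\boldsymbol{\kappa}^{\chi})$. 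Here \eqref{eq:irred} and \eqref{eq:intro-disc} (with $p>3$) are what give $E(K[n])[p]=0$ and the irreducibility used throughout, and \eqref{eq:manin} relates $\kappa_1^{\rm Heeg}$ to the Heegner point $y_K$; the only change from \emph{op.\,cit.}\ is in the local theory at the prime of $K$ above $p$, which must now also cover the inert case.

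\emph{Step 2 (computing the index via Selmer complexes).} This is the new input, and it is insensitive to how $p$ splits in $K$. Base-changing the perfect complex $\widetilde{R\Gamma}_f(K,\mathbb{T})$ along $\Lambda\to\Z_p[\chi]$ and using Nekov\'{a}\v{r}--Pottharst descent, one expresses $\det_{\Z_p[\chi]}\widetilde{R\Gamma}_f(K,T_\chi)$ in terms of $\widetilde{\rH}^1_f(K,T_\chi)$ and $\widetilde{\rH}^2_f(K,T_\chi)$. Comparing the Selmer complex appearing in Conjecture~\ref{conj:HPMC-det} with the arithmetic (Bloch--Kato) Selmer complex through which $\boldsymbol{\kappa}^{\rm Heeg}$ is defined in \cite{howard}, the discrepancy is localized at the primes $\ell\mid N$ and equals $p^{\ord_p({\rm Tam}_E)}$ --- there being no contribution from the archimedean place (complex), nor from $p$ (where, $p$ being good ordinary, the ordinary and Bloch--Kato conditions agree, and \eqref{eq:manin} keeps the class primitive) --- exactly as in the classical Gross--Zagier--Kolyvagin computation. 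Letting $\mathfrak{z}_{\rm Heeg}$ denote the Heegner zeta element built from $\kappa_1^\Lambda$ that occurs in Conjecture~\ref{conj:HPMC-det}, and $\mathfrak{C}\subseteq\Lambda$ the ideal with $(\mathfrak{z}_{\rm Heeg})=\mathfrak{C}\cdot\det_\Lambda\widetilde{R\Gamma}_f(K,\mathbb{T})$, this yields
\[
\ind_{\Z_p}\bigl(\kappa_1^{\Lambda,\chi}\bigr)=\ord_p({\rm Tam}_E)+\length_{\Z_p}\bigl(\Z_p[\chi]/\mathfrak{C}(\chi)\bigr)
\]
for all $\chi$ of large conductor, and Conjecture~\ref{conj:HPMC-det} is precisely the assertion $\mathfrak{C}=\Lambda$. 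Combining the two steps, $\mathscr{M}_\infty(\boldsymbol{\kappa}^{\rm Heeg})=\ord_p({\rm Tam}_E)$ holds if and only if $\length_{\Z_p}(\Z_p[\chi]/\mathfrak{C}(\chi))=0$ for all such $\chi$; since the $\mathfrak{p}_\chi$ run over infinitely many height-one primes of $\Lambda$ and each $\Z_p[\chi]$ is a discrete valuation ring, a proper ideal $\mathfrak{C}$ would have $\mathfrak{C}(\chi)$ proper for all but finitely many $\chi$, so this vanishing is equivalent to $\mathfrak{C}=\Lambda$. Hence (i)$\Leftrightarrow$(ii); and when $p$ splits in $K$, Conjecture~\ref{conj:HPMC-det} is known (there it reduces to the Heegner point Main Conjecture, proved in that case), so Conjecture~\ref{conj:refined-Kol} follows.

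The main obstacle is Step 2: pinning down the precise local Tamagawa contribution in the comparison of the two Selmer complexes --- above all at the prime(s) of $K$ above $p$, where in the inert case one works over the unramified quadratic extension $K_{\mathfrak{p}}/\Q_p$ along a tower totally ramified at $\mathfrak{p}$ and the ordinary condition has to be handled integrally --- and verifying that the Bockstein and descent ``error terms'' arising in passing from $\mathbb{T}$ to $T_\chi$ genuinely vanish once $\chi$ is sufficiently ramified, so that the two displayed identities are equalities rather than mere inequalities. Forcing this vanishing is exactly what dictates working with characters $\chi$ close to --- but distinct from --- the trivial character, and it is the technical point that here replaces the Kolyvagin-system structure theorems used in \cite{BCGS}.
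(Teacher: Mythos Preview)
Your overall architecture is right, but two of the key displayed identities are misstated in a way that makes the argument, as written, fail.

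\textbf{Step 1 misstates the structure theorem.} The Kolyvagin-system structure theorem (Howard after Mazur--Rubin, and Theorem~\ref{thm:kol-control} here) does \emph{not} identify $\mathscr{M}_\infty$ of the $\alpha$-specialized system with $\ind_p(\kappa_{1,\Lambda}^{\rm Heeg}(\alpha))$. What it actually gives is
\[
\ind_p(\kappa_{1,\Lambda}^{\rm Heeg}(\alpha))-\mathscr{M}_\infty(\boldsymbol{\kappa}_\Lambda^{\rm Heeg}(\alpha))=\length_{\Z_p}(S_\alpha),
\]
where $S_\alpha$ is (half of) $\sha_{\rm BK}(T(\alpha)^*/K)$. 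There is no reason for $S_\alpha$ to vanish for $\alpha$ close to trivial (indeed it will not when ${\rm ord}_{s=1}L(E/K,s)>1$), so your equality $\mathscr{M}_\infty(\boldsymbol{\kappa}^{\rm Heeg})=\ind_{\Z_p}(\kappa_1^{\Lambda,\chi})$ is false in general.

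\textbf{Step 2 drops two terms.} The determinant descent (Proposition~\ref{prop:descent} and Corollary~\ref{cor:index-m}) gives, under the IMC,
\[
2\cdot\ind_p(\kappa_{1,\Lambda}^{\rm Heeg}(\alpha))=2\cdot{\rm ord}_p(L_p)+2\cdot{\rm ord}_p({\rm Tam}_E)+{\rm ord}_p(\#\sha_{\rm BK}(T(\alpha)^*/K)),
\]
with $L_p=\prod_{v\mid p}\#\widetilde{E}(\mathbb{F}_v)$. So your claim that there is ``no contribution from $p$'' is wrong: the term $L_p$ comes from $\rH^0(K_v,A_v^-(\alpha^{\pm 1}))$ (Lemma~\ref{lem:ord-BK}), and it is essential, because it is exactly what cancels the Euler factor in Lemma~\ref{lemmacongruence} relating $\kappa_{n,\Lambda}^{\rm Heeg}(\alpha)$ to $\kappa_n^{\rm Heeg}$. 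And the $\sha$ term must appear here too; it is what cancels against the $\length(S_\alpha)$ from Step~1.

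Once you restore both missing terms in each step, the $S_\alpha$ and $L_p$ contributions cancel across the two identities and you recover $\mathscr{M}_\infty(\boldsymbol{\kappa}_\Lambda^{\rm Heeg}(\alpha))={\rm ord}_p(L_p)+{\rm ord}_p({\rm Tam}_E)$, hence $\mathscr{M}_\infty(\boldsymbol{\kappa}^{\rm Heeg})={\rm ord}_p({\rm Tam}_E)$ via Lemma~\ref{lemmacongruence} and the rigidity of $\mathscr{M}_\infty$ under restriction to $\mathcal{N}^{(m)}$ (\cite[Prop.~2.2.1]{BCGS}). For ${\rm (i)}\Rightarrow{\rm (ii)}$ you also need, before specializing, the integrality $\Lambda^{\rm ac}\cdot\widetilde{\mathfrak{z}}_{K_\infty}\subset\det^{-1}_{\Lambda^{\rm ac}}\widetilde{\mathbf{R}\Gamma}_f(\Tac)$ (equivalently, one divisibility in the Heegner point main conjecture) from Howard's Euler-system argument; your proposal does not mention this, and without it the ideal $\mathfrak{C}$ is only a fractional ideal and the specialization argument via \cite[Lem.~3.2]{skinner-urban} does not go through.
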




\subsection{Outline of the proofs}\label{subsec:outline}

The proofs of Theorem~\ref{thmintro-Kur} and Theorem~\ref{thmintro-Kol} go along the same lines as the proofs of related results \cite{BCGS}; our novelty here is in the approach to the descent computations in \emph{op.\,cit.}. 

We shall first explain the proof of Theorem \ref{thmintro-Kur}. Letting $\Lambda=\Z_p[[\Gamma]]$ be the cyclotomic Iwasawa algebra over $\Q$, the proof of Theorem~\ref{thmintro-Kur} relies on a study of the $\Lambda$-adic Kolyvagin system 
\[
\boldsymbol{\kappa}_\Lambda^{\rm Kato}=\{{\kappa}_{n,\Lambda}^{\rm Kato}\in\rH^1(\Q,\mathbf{T}/I_n\mathbf{T})\;\colon\;n\in\mathcal{N}\}
\]
derived from Kato's Euler system, where $\mathbf{T}=T\otimes_{\Z_p}\Lambda$ is the cyclotomic deformation of $T$. By Kato's explicit reciprocity law \cite{kato-euler-systems} and Rohrlich's nonvanishing results \cite{rohrlich-cyc}, ${\kappa}^{\rm Kato}_{1,\Lambda}\in\rH^1(\Q,\mathbf{T})$ is not $\Lambda$-torsion, and so letting ${\kappa}_{1,\Lambda}^{\rm Kato}(\alpha)\in\rH^1(\Q,T(\alpha))$ denote the specialization of ${\kappa}_{1,\Lambda}^{\rm Kato}$ at a non-trivial character $\alpha:\Gamma\rightarrow\Z_p^\times$ with $\alpha\equiv 1\;({\rm mod}\,p^m)$, it follows that ${\kappa}_{1,\Lambda}^{\rm Kato}(\alpha)$ is nonzero for $m\gg 0$. Towards the proof of ${\rm (ii)}\Rightarrow{\rm (i)}$ in Theorem~\ref{thmintro-Kur}, we make use of:
\begin{itemize}
\item[(A)] An exact formula, for $m\gg 0$, for the divisibility index 
\[
{\rm ind}_p\bigl({\kappa}_{1,\Lambda}^{\rm Kato}(\alpha)):=\max\{\mathscr{M}\geq 0\;\colon\;{\kappa}_{1,\Lambda}^{\rm Kato}(\alpha)\in p^\mathscr{M}\rH^1(\Q,T(\alpha))\}
\]
deduced from the Iwasawa Main Conjecture~\ref{conj:IMC-det}. The Tamagawa factors for the Cartier dual $T(\alpha)^*={\rm Hom}(T(\alpha),\mu_{p^\infty})$ appear here. (See Corollary~\ref{cor:index-m-Q}.)
\item[(B)] An exact formula for the $\Z_p$-length of the \emph{strict Selmer group} $\rH^1_{\Fcal_{\rm str}}(\Q,T(\alpha)^*)$ (which for $m\gg 0$ such that $\kappa_{1,\Lambda}^{\rm Kato}(\alpha)\neq 0$ can be shown to be finite) in terms of the difference 
\[
{\rm ind}_p\bigl({\kappa}_{1,\Lambda}^{\rm Kato}(\alpha))-\mathscr{M}_\infty(\boldsymbol{\kappa}_\Lambda^{\rm Kato}(\alpha))
\]
deduced from Mazur--Rubin's structure theorem in \cite{mazrub}. (See Theorem~\ref{thm:str-Kato}.)
\end{itemize}
For $m$ with $p^m\in I_n$, the classes ${\kappa}_{n,\Lambda}^{\rm Kato}(\alpha)$ can be compared to the classes $\kappa_n^{\rm Kato}\in\rH^1(\Q,T/I_nT)$ arising directly from Kato's Euler system (i.e., without going up the cyclotomic tower), and building on (A) and (B) we show that the Iwasawa Main Conjecture~\ref{conj:IMC-det} implies, for $m\gg 0$, the equalities
\[
\mathscr{M}_\infty(\boldsymbol{\kappa}^{\rm Kato})=
\mathscr{M}_\infty(\boldsymbol{\kappa}_{\Lambda}^{\rm Kato}(\alpha))={\rm ord}_p({\rm Tam}_E)+t,
\]
where $t={\rm ord}_p(\#E(\Q_p)[p^\infty])$. Together with the `derived' version of Kato's explicit reciprocity law obtained in \cite{kim} extending Kato's relation between $\kappa_n^{\rm Kato}$ and $\delta_n$ for $n=1$ via the dual exponential map to $n>1$ (see Theorem~\ref{thm:der-ERL}), we thus arrive at the implication ${\rm (ii)}\Rightarrow{\rm (i)}$ in Theorem~\ref{thmintro-Kur}. The proof of the converse is similar: The computations that go into the proof of (A) show that the formula for ${\rm ind}_p(\kappa_{1,\Lambda}^{\rm Kato}(\alpha))$ obtained in Corollary~\ref{cor:index-m-Q} for $m\gg 0$ is equivalent to the Iwasawa Main Conjecture~\ref{conj:IMC-det} specialized at $\alpha$, which together with the divisibility coming from an Euler system argument applied to the non-trivial $\boldsymbol{\kappa}_\Lambda^{\rm Kato}(\alpha)$ yields a proof of the main conjecture. 

The proof of the equivalence in Theorem~\ref{thmintro-Kol} is similar: (A) and (B) then correspond to Corollary~\ref{cor:index-m}  and Theorem~\ref{thm:kol-control}, respectively.  

Most of the work in the paper goes into the proof of (A)  and its analogue for Heegner classes. For this, the new idea in this paper is to build on a reformulation of the Iwasawa Main Conjecture in terms of determinants of Selmer complexes introduced in \cite{kataoka-sano}, 
dispensing with the use of $p$-adic $L$-functions  and control theorems in the style of \cite{greenberg-cetraro} in the approach of \cite{BCGS}. The key descent computations are Proposition \ref{prop:descent-Q} and Proposition \ref{prop:descent}.

\subsection{Comparison to previous works}

\subsubsection{On the refined Kurihara's conjecture}

For ordinary primes $p$, and subject to the `non-anomalous' condition 
\begin{equation}\label{eq:nonanom}
E(\Q_p)[p]=0,\tag{nonanom}
\end{equation}
Corollary~\ref{corintro:refined-Kur} was first proved by Kurihara 
assuming the non-degeneracy of the cyclotomic $p$-adic height pairing  (see \cite[Thm.~10.8]{kurihara-munster}), and more recently by Sakamoto and C.-H.~Kim independently (see \cite[Thm.~1.2]{sakamoto-docmath} and \cite[Thm.~1.10]{kim}). Also in the non-anomalous $p$-ordinary case, Theorem~\ref{thmintro-Kur} (and hence also Corollary~\ref{corintro:refined-Kur}) was essentially proved in \cite{BCGS} (see \cite[Rem.~3.3.5]{BCGS}). 

On the other hand, for non-ordinary (and possibly bad reduction) primes $p$,  Theorem~\ref{thmintro-Kur} was independently proved by C.-H.~Kim assuming $p\nmid{\rm Tam}_E$ (see \cite[Thm.~1.10]{kim}), but the case ${\rm ord}_p({\rm Tam}_E)>0$ of Theorem~\ref{thmintro-Kur} was wide open\footnote{See also forthcoming work by Kurihara--Sakamoto \cite{kurihara-sakamoto} for an independent approach to related results.}.

\subsubsection{On the refined Kolyvagin's conjecture} 

Under certain ramification hypotheses that imply $p\nmid {\rm ord}_p({\rm Tam}_E)$, Conjecture~\ref{conj:refined-Kol} was first proved by W.~Zhang \cite{zhang_ind}. 

Restricting to the case $p$ split in $K$, Theorem~\ref{thmintro-Kol} was first proved in \cite{BCGS} through a study of the $p$-adic $L$-function of Bertolini--Darmon--Prasanna \cite{BDP}, its relation with $y_\infty$ via an explicit reciprocity law \cite{cas-hsieh1}, and its associated Iwasawa--Greenberg Main Conjecture (see Theorem~B and Remark~2.2.5 in \cite{BCGS}). 
Our approach does not require the use of $p$-adic $L$-functions, and applies also in the $p$ inert case. 


\subsection{Acknowledgements}
We heartily thank Masato Kurihara for several enlightening conversations related to the topics of this paper, and his comments and corrections on an earlier draft. We also thank Chan-Ho Kim for helpful comments. At different stages during the preparation of this paper, the first author was supported by the NSF grant DMS-2401321, the 2014-2015 AMS Centennial Research Fellowship, and a JSPS short-term Invitational Fellowship for Research in Japan. The second author was supported by JSPS KAKENHI Grant Number 22K13896.

\section{Kurihara conjectures}

\subsection{Preliminaries}

As in Introduction, let $E/\Q$ be an elliptic curve of conductor $N$ without CM, and fix an odd prime $p$ such that \eqref{eq:irred} and \eqref{eq:manin} both hold.

\subsubsection{Kato's Kolyvagin systems}

As in \cite[\S{2.3}]{kim}, we consider Kato's Euler system 
\begin{equation}\label{eq:kato-ES}
\big\{ z_{np^k}^{}\in\rH^1(\Q(\mu_{np^k}),T)\;\colon\;(n,Np)=1,\;k\geq 0\big\}
\end{equation}
following the normalization in \cite[Thm.~6.1]{kataoka}. In particular, by Kato's explicit reciprocity law \cite{kato-euler-systems} the natural image of ${\rm res}_p(z_1^{})$ under the dual exponential map
${\rm exp}_{\omega_E}^*:\rH^1(\Q_p,T)\rightarrow\Q_p$ is given by 
\begin{equation}\label{eq:kato-ERL}
{\rm exp}^*_{\omega_E}({\rm res}_p(z_1^{}))=\frac{L^{\{p\}}(E,1)}{\Omega_E^+},
\end{equation}
where $L^{\{p\}}(E,s)$ is the $L$-function of $E$ with the Euler factor at $p$ removed.

By \cite[Thm.~3.2.4 and $\S{6.2}$]{mazrub}, 
(see also [\emph{op.\,cit.}, $\S{6.2}$]), by the Kolyvagin derivative process, from \eqref{eq:kato-ES} we obtain a \emph{Kolyvagin system}
\[
\boldsymbol{\kappa}^{\rm Kato}=\big\{\kappa_n^{\rm Kato}\in\rH^1_{\Fcal_{\rm rel}(n)}(\Q,T/I_nT)\;\colon\;n\in\mathcal{N}\big\}
\]
in the sense of \cite[\S{3.1}]{mazrub} for the triple $(T,\mathcal{F}_{\rm rel},\mathcal{L})$, where $\Fcal_{\rm rel}$ is the `canonical' Selmer structure of [\emph{op.\,cit.}, Def.~3.2.1], 
and $\mathcal{F}_{\rm rel}(n)$ denotes the modification of $\mathcal{F}_{\rm rel}$ given by
\begin{equation}\label{eq:F(n)}
\rH^1_{\Fcal_{\rm rel}(n)}(\Q_\ell,T/I_nT)=
\begin{cases}
\rH^1_{\rm tr}(\Q_\ell,T/I_nT):=\rH^1(\Q_\ell(\mu_\ell)/\Q_\ell,\rH^0(\Q_\ell(\mu_\ell),T/I_nT))&\textrm{if $\ell\mid n$,}\\[0.2em]
\rH^1_{\Fcal_{\rm rel}}(\Q_\ell,T/I_nT)&\textrm{if $\ell\nmid n$.}
\end{cases}
\end{equation}
In particular, the construction gives 
\[
\kappa_1^{\rm Kato}=z_1\in\rH^1_{\Fcal_{\rm rel}}(\Q,T),
\]
where
\[
\rH^1_{\Fcal_{\rm rel}}(\Q,T):={\rm ker}\biggl\{\rH^1(\Q,T)\rightarrow\bigoplus_{\ell\neq p}\frac{\rH^1(\Q_\ell,T)}{\rH^1_f(\Q_\ell,T)}\biggr\}
\]
with $\rH^1_f(\Q_\ell,T):={\rm ker}\{\rH^1(\Q_\ell,T)\rightarrow\rH^1(\Q_\ell^{\rm ur},T\otimes_{\Z_p}\Q_p)\}$.

Let $\Q_\infty$ be the cyclotomic $\Z_p$-extension of $\Q$, with $k$-th layer $\Q_k$, and put $\Gamma={\rm Gal}(\Q_\infty/\Q)$ and $\Lambda=\Z_p[[\Gamma]]$. For varying $k\geq 0$, the classes $z_{np^k}$ are compatible under the corestriction maps, and hence for every $n\geq 1$ we can let $z_{n,\Lambda}\in\rH^1(\Q(\mu_n),\mathbf{T)}$ be the class defined by the image of $\varprojlim_kz_{np^k}$ under the composite map
\[
\varprojlim_k\rH^1(\Q(\mu_{np^{k}}),T)\rightarrow\varprojlim_k\rH^1(\Q(\mu_n)\Q_k,T)\cong\rH^1(\Q(\mu_n),\mathbf{T}),
\]
where $\mathbf{T}=T\otimes_{\Z_p}\Lambda$, with
the $G_\Q$-action on $\Lambda$ is given by the inverse of the character $G_\Q\twoheadrightarrow\Gamma\hookrightarrow\Lambda^\times$, and the last isomorphism is given by Shapiro's lemma. By \cite[Thm.~5.3.3]{mazrub} (see also [\emph{op.\,cit.}, \S{6.2}]), the Kolyvagin derivative process applied to $\{z_{n,\Lambda}\}_n$ leads to the construction of a \emph{$\Lambda$-adic Kolyvagin system}
\begin{equation}\label{eq:kato-KS-Lambda}
\boldsymbol{\kappa}_\Lambda^{\rm Kato}=\big\{\kappa_{n,\Lambda}^{\rm Kato}\in\rH^1_{\Fcal_\Lambda(n)}(\Q,\mathbf{T}/I_n\mathbf{T})\;\colon\;n\in\mathcal{N}\big\}
\end{equation}
in the sense of \cite[\S{3.1}]{mazrub} for the triple $(\mathbf{T},\mathcal{F}_\Lambda,\mathcal{L}_E)$, where $\Fcal_\Lambda$ is the Selmer structure of  \cite[Def.~5.3.2]{mazrub} (and $\mathcal{F}_\Lambda(n)$ is the modification of $\Fcal_\Lambda$ analogous to \eqref{eq:F(n)}). Moreover, by construction 
\begin{equation}\label{eq:kato-Lambda-1}
\kappa_{1,\Lambda}^{\rm Kato}=z_\infty:=z_{1,\Lambda}\in\rH^1_{\Fcal_\Lambda}(\Q,\mathbf{T}).
\end{equation}
For any character $\alpha:\Gamma\rightarrow\Z_p^\times$, let $\kappa_{n,\Lambda}^{\rm Kato}(\alpha)\in \rH^1(\Q,T(\alpha)/I_nT(\alpha))$ denote the $\alpha$-specialization of $\kappa_{n,\Lambda}^{\rm Kato}$. 

\begin{lemma}\label{lem:congr-Kato}
Suppose $\alpha:\Gamma\rightarrow\Z_p^\times$ is such that $\alpha\equiv 1\;({\rm mod}\,p^m)$ for some $m\geq 1$. Then for all $n\in\mathcal{N}$ with $p^m\in I_n$ the classes $\kappa_{n,\Lambda}^{\rm Kato}(\alpha)$ and $\kappa_n^{\rm Kato}$ have the same image in $\rH^1(\Q,T(\alpha)/p^mT(\alpha))\cong\rH^1(\Q,T/p^mT)$:
\[
\kappa_{n,\Lambda}^{\rm Kato}(\alpha)\equiv \kappa_n^{\rm Kato}\;({\rm mod}\,p^m).
\]
\end{lemma}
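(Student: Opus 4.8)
The plan is to strip off the character $\alpha$ first, reducing the claim to a comparison between the \emph{trivial} specialization of Kato's $\Lambda$-adic Euler system and Kato's Euler system itself, and then to transport that comparison through the Kolyvagin derivative procedure, which is functorial in the coefficient module. Because $\alpha\equiv 1\;({\rm mod}\,p^m)$, the $\Z_p$-algebra homomorphism $\alpha\colon\Lambda\to\Z_p$ coincides modulo $p^m$ with the augmentation; hence the composite $\mathbf{T}\to T(\alpha)\to T(\alpha)/p^mT(\alpha)$ is identified, under the canonical isomorphism $T(\alpha)/p^mT(\alpha)\cong T/p^mT$, with the trivial specialization followed by reduction, $\mathbf{T}\to T\to T/p^mT$. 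Consequently $z_{n,\Lambda}(\alpha)$ and $z_{n,\Lambda}(\mathbf 1)$, and likewise $\kappa_{n,\Lambda}^{\rm Kato}(\alpha)$ and $\kappa_{n,\Lambda}^{\rm Kato}(\mathbf 1)$, have the same image modulo $p^m$, so it is enough to prove the congruence for $\alpha=\mathbf 1$.

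For the trivial specialization, I would use that $z_{n,\Lambda}$ is built, via Shapiro's lemma, from $\varprojlim_k z_{np^k}$, so that $z_{n,\Lambda}(\mathbf 1)$ is the image of this tower in the bottom layer of $\varprojlim_k\rH^1(\Q(\mu_n)\Q_k,T)$; tracking the normalization of \cite[Thm.~6.1]{kataoka} — in particular the corestriction maps along the $\mu_{p^\infty}$-tower, where an Euler factor at $p$ could a priori intervene — the outcome is $z_n$. Granting the identification of $z_{n,\Lambda}(\mathbf 1)$ with $z_n$ modulo $p^m$, the congruence propagates to the derived classes: both $\boldsymbol{\kappa}_\Lambda^{\rm Kato}$ and $\boldsymbol{\kappa}^{\rm Kato}$ are produced from their Euler systems by the same recipe of \cite{mazrub} — apply the Kolyvagin derivative operator $D_n=\prod_{\ell\mid n}D_\ell$ to the class of conductor $n$ and descend the outcome, modulo the appropriate modulus, along $\rH^1(\Q(n),-)\to\rH^1(\Q,-)$ — and both operations are purely cohomological and commute with the coefficient morphism $\mathbf{T}\to T/p^mT$ above. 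So the image of $\kappa_{n,\Lambda}^{\rm Kato}(\mathbf 1)$ in $\rH^1(\Q,T/p^mT)$ is the Kolyvagin-derived class attached to $\{z_m\bmod p^m\}$, namely the image of $\kappa_n^{\rm Kato}$, which is the asserted congruence. The hypothesis $p^m\in I_n$ enters precisely to ensure that $\bmod\,p^m$ is an admissible modulus at which the descent for conductor $n$ can be carried out, compatibly, over both $\mathbf{T}$ and $T/p^mT$.

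The real work is in the functoriality used in the last step. One must verify that the descent along $\operatorname{Gal}(\Q(n)/\Q)$ in the Mazur--Rubin construction — canonical because the relevant $\operatorname{Gal}(\Q(n)/\Q)$-invariants vanish by \eqref{eq:irred} — is compatible with the coefficient map $\mathbf{T}\to T/p^mT$, and that the transverse local conditions at the primes $\ell\mid n$ which distinguish $\mathcal{F}_\Lambda(n)$ from $\mathcal{F}_\Lambda$, and $\mathcal{F}_{\rm rel}(n)$ from $\mathcal{F}_{\rm rel}$, pass into each other under this reduction; one should also confirm that the normalization is such that $z_{n,\Lambda}(\mathbf 1)\equiv z_n\;({\rm mod}\,p^m)$, with the Euler factor at $p$ accounted for. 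All of this is routine under the running hypotheses, but it is where the bookkeeping lies; alternatively, one could invoke the specialization formalism for $\Lambda$-adic Kolyvagin systems developed in \cite{mazrub} and read the congruence off from it.
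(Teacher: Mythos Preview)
Your proposal is correct and is essentially an unpacking of the paper's own proof, which is the single sentence ``This is clear from the construction of the Kolyvagin systems in \cite[App.~A]{mazrub}.'' You spell out the two ingredients the paper leaves implicit --- the identification $T(\alpha)/p^m\cong T/p^m$ coming from $\alpha\equiv 1\pmod{p^m}$, and the functoriality of the Kolyvagin derivative/descent recipe in the coefficient module --- and you correctly note that the corestriction-compatibility of the $z_{np^k}$ (stated just before \eqref{eq:kato-KS-Lambda}) makes $z_{n,\Lambda}(\mathbf 1)=z_n$ with no stray Euler factor at $p$.
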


\begin{proof}
This is clear from the construction of the Kolyvagin systems in \cite[App.~A]{mazrub}.
\end{proof}

As in \cite{BCGS}, the specialized Kolyvagin system 
\begin{equation}\label{eq:KS-kato-alpha}
\boldsymbol{\kappa}_\Lambda^{\rm Kato}(\alpha)=\{\kappa_{n,\Lambda}^{\rm Kato}(\alpha)\}_n\in\mathbf{KS}(T(\alpha),\mathcal{F}_{\rm rel},\Lcal)
\end{equation}
(see \cite[Cor.~5.3.15]{mazrub} for the last inclusion) will be of use because of the following fundamental result.

\begin{thm}\label{thm:nonzero-Kato-alpha}
Suppose $\alpha:\Gamma\rightarrow\Z_p^\times$ is a non-trivial character with $\alpha\equiv 1\;({\rm mod}\,p^m)$ for some $m\geq 1$. Then 
\[
\textrm{$\kappa_{1,\Lambda}^{\rm Kato}(\alpha)\neq 0$ for $m\gg 0$.}
\]
In other words, for $m\gg 0$ the Kolyvagin system $\boldsymbol{\kappa}_{\Lambda}^{\rm Kato}(\alpha)$ is non-trivial.
\end{thm}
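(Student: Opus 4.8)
The plan is to reduce the nonvanishing of $\kappa_{1,\Lambda}^{\rm Kato}(\alpha)$ for $m\gg 0$ to the fact that $\kappa_{1,\Lambda}^{\rm Kato}=z_\infty\in\rH^1_{\Fcal_\Lambda}(\Q,\mathbf{T})$ is \emph{not} $\Lambda$-torsion, which in turn follows from Kato's explicit reciprocity law \eqref{eq:kato-ERL} combined with Rohrlich's nonvanishing theorem. The point is that a non-torsion element of a finitely generated $\Lambda$-module survives specialization at all but finitely many height-one primes, and the kernel of the specialization map $\alpha$ is the height-one prime $(\gamma-\alpha(\gamma))$ for a topological generator $\gamma$ of $\Gamma$.

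First I would recall, via Kato (see \eqref{eq:kato-ES}, \eqref{eq:kato-ERL}), that the dual exponential of the bottom class of Kato's Euler system over the cyclotomic tower interpolates the $p$-stabilized $L$-values $L^{\{p\}}(E,\chi,1)$ as $\chi$ runs over finite-order characters of $\Gamma$; by Rohrlich's nonvanishing result \cite{rohrlich-cyc}, infinitely many of these values are nonzero, so the image of $z_\infty$ under the $\Lambda$-adic dual exponential map $\rH^1(\Q_p,\mathbf{T})\to\Lambda\otim_{} \Qp$ (equivalently, under ${\rm res}_p$ composed with the big dual exponential) is nonzero, hence $z_\infty$ is not $\Lambda$-torsion. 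Equivalently, $\rH^1_{\Fcal_\Lambda}(\Q,\mathbf{T})$ has $\Lambda$-rank $\geq 1$ and $z_\infty$ generates a free rank-one submodule up to torsion.

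Next I would pin down the specialization. For a topological generator $\gamma\in\Gamma$ and $\varpi_\alpha:=\gamma-\alpha(\gamma)\in\Lambda$, specialization at $\alpha$ is the map induced by $\Lambda\twoheadrightarrow\Lambda/\varpi_\alpha\Lambda\cong\Z_p$; compatibly, $T(\alpha)=\mathbf{T}/\varpi_\alpha\mathbf{T}$ and there is an exact sequence giving a map $\rH^1(\Q,\mathbf{T})/\varpi_\alpha\to\rH^1(\Q,T(\alpha))$, whose kernel is controlled by $\rH^0(\Q,\mathbf{T}\otimes\Lambda/\varpi_\alpha)=\rH^0(\Q,T(\alpha))$, and the latter vanishes because $\bar\rho$ is surjective by \eqref{eq:irred}, so $\rH^0(\Q,E[p])=0$. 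Thus $\kappa_{1,\Lambda}^{\rm Kato}(\alpha)=0$ would force $z_\infty\in\varpi_\alpha\rH^1(\Q,\mathbf{T})$.

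The heart of the argument is then: since $z_\infty$ is not $\Lambda$-torsion, the set of height-one primes $\mathfrak{p}$ of $\Lambda$ with $z_\infty\in\mathfrak{p}\cdot\rH^1(\Q,\mathbf{T})$ is finite. Indeed, writing $M=\rH^1(\Q,\mathbf{T})$ (finitely generated over the Noetherian regular local ring $\Lambda$ by Kato's finiteness results), if $z_\infty\in\mathfrak{p}M$ for infinitely many distinct height-one primes $\mathfrak{p}$, then the image of $z_\infty$ in $M/M_{\rm tors}$, a torsion-free hence (after inverting finitely many primes) reflexive module, would lie in $\bigcap_\mathfrak{p}\mathfrak{p}(M/M_{\rm tors})=0$, forcing $z_\infty$ to be torsion — a contradiction. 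The principal primes of the form $\varpi_\alpha\Lambda$ with $\alpha\equiv 1\pmod{p^m}$ are pairwise distinct for distinct $\alpha$, and there are infinitely many such $\alpha$ for each fixed $m$ and indeed one can choose $\alpha$ with $m$ arbitrarily large; hence for all but finitely many such $\alpha$ — in particular for every non-trivial $\alpha$ with $m$ sufficiently large — we get $z_\infty\notin\varpi_\alpha M$, i.e. $\kappa_{1,\Lambda}^{\rm Kato}(\alpha)\neq 0$. This gives the statement, and the final clause about $\boldsymbol{\kappa}_\Lambda^{\rm Kato}(\alpha)$ being non-trivial is immediate since $\kappa_{1,\Lambda}^{\rm Kato}(\alpha)$ is one of its components.

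I expect the main obstacle to be the bookkeeping around the specialization map — namely verifying that $z_\infty\notin\varpi_\alpha\rH^1(\Q,\mathbf{T})$ genuinely implies $\kappa_{1,\Lambda}^{\rm Kato}(\alpha)\neq 0$ and not merely that its image in some subquotient is nonzero. This is handled by the vanishing $\rH^0(\Q,T(\alpha))=0$ coming from \eqref{eq:irred} together with the snake lemma applied to multiplication by $\varpi_\alpha$ on the complex $\RG(\Q,\mathbf{T})$; in fact it is cleanest to phrase the whole argument at the level of the perfect complex $\RG(\Q,\mathbf{T})$, where $\RG(\Q,\mathbf{T})\otim^{\mathbb L}_\Lambda\Lambda/\varpi_\alpha\cong\RG(\Q,T(\alpha))$ and $\rH^0=0$ makes $\rH^1$ specialize without error term. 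Alternatively one may simply cite the corresponding statement in \cite{BCGS} (proof of the analogue of Theorem~\ref{thm:nonzero-Kato-alpha} there), observing that the argument depends only on $z_\infty$ being non-torsion and on \eqref{eq:irred}, both of which hold in the present generality.
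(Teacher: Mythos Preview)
Your proposal is correct and follows essentially the same approach as the paper: the paper's proof is a two-line appeal to Kato's explicit reciprocity law \cite[Thm.~16.6]{kato-euler-systems} and Rohrlich's nonvanishing \cite{rohrlich-cyc}, which together yield that $z_\infty=\kappa_{1,\Lambda}^{\rm Kato}$ is non-$\Lambda$-torsion, and you have simply unpacked the standard reduction from ``non-torsion over $\Lambda$'' to ``nonzero specialization at all but finitely many height-one primes $(\gamma-\alpha(\gamma))$''. The additional care you take with the injectivity of $\rH^1(\Q,\mathbf{T})/\varpi_\alpha\hookrightarrow\rH^1(\Q,T(\alpha))$ via $\rH^0(\Q,T(\alpha))=0$ is correct and implicit in the paper's citation.
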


\begin{proof}
In light of \eqref{eq:kato-Lambda-1}, the result follows from Kato's explicit reciprocity law \cite[Thm.~16.6]{kato-euler-systems} and Rohrlich's nonvanishing results \cite{rohrlich-cyc}.
\end{proof}

\subsubsection{Link with Kurihara's $\delta_n$}

The bridge allowing us to translate our results on Kato's derived cohomology classes $\kappa_n^{\rm Kato}$ to statements about Kurihara's analytic quantities  $\delta_n$ 
is the following extension of Kato's explicit reciprocity law \eqref{eq:kato-ERL}. 

\begin{thm}\label{thm:der-ERL}
Suppose $p>3$ is a prime such that \eqref{eq:irred} and \eqref{eq:manin} both hold. 
Reducing the dual exponential map
${\rm exp}_{\omega_E^*}:\rH^1(\Q_p,T)\rightarrow\Z_p$ modulo $I_n$, for $n>1$, induces a well-defined map
\[
\overline{{\rm exp}_{\omega_E}^*}:\rH^1(\Q_p,T/I_nT)\rightarrow\Z_p/I_n\Z_p
\]
with the property that 
\[
\overline{{\rm exp}_{\omega_E}^*}\bigl({\rm res}_p(\kappa_n^{\rm Kato})\bigr)=u\cdot p^t\cdot\delta_n, 
\]
where $u\in(\Z_p/I_n\Z_p)^\times$  and $p^t=\#E(\Q_p)[p^\infty]$.
\end{thm}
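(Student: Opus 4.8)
\textbf{Proof plan for Theorem~\ref{thm:der-ERL}.} The strategy is to reduce the statement to Kato's explicit reciprocity law \eqref{eq:kato-ERL} for $n=1$ by tracking how the Kolyvagin derivative operators interact, on the one hand, with the local cohomology at $p$ and the dual exponential map, and on the other with Mazur--Tate's $\theta$-elements whose coefficients are Kurihara's $\delta_n$. First I would check that $\overline{{\rm exp}_{\omega_E}^*}$ is well-defined: since $z_{np^k}$ and its derived classes are built from the Euler system \eqref{eq:kato-ES} which is unramified away from $Np$, the image of ${\rm res}_p$ on the derived classes lands in a lattice on which ${\rm exp}_{\omega_E}^*$ takes values in $\Z_p$, and the Kolyvagin derivative relations force the reduction mod $I_n$ to be independent of the choices (this is essentially the computation in \cite[App.~A]{mazrub} transported through the $p$-adically continuous map ${\rm exp}_{\omega_E}^*$). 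The factor $u\in(\Z_p/I_n\Z_p)^\times$ will absorb the ambiguity coming from the choice of primitive roots $\eta_\ell$ and the normalization of the derivative operators $D_\ell$.

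The core of the argument is the following comparison. On the Euler system side, the class $z_{np^k}$ over $\Q(\mu_{np^k})$ has, via Kato's reciprocity law over the cyclotomic extension, a $p$-local image whose dual exponential is the $p$-stabilized $L$-value twisted by the relevant Dirichlet character; passing to the derived class $\kappa_n^{\rm Kato}$ applies the Kolyvagin operator $D_n=\prod_{\ell\mid n}D_\ell$ to the system $\{z_{nd}\}$. I would use that ${\rm res}_p$ commutes with corestriction and with the derivative operators (the latter act on the $\ell$-part of the conductor, which is prime to $p$), so that $\overline{{\rm exp}_{\omega_E}^*}({\rm res}_p(\kappa_n^{\rm Kato}))$ equals, up to the unit $u$ and the power $p^t$, the image of $D_n$ applied to the collection $\{L^{\{p\}}(E,\chi,1)/\Omega_E^+\}_\chi$ of twisted $L$-values. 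On the other hand, the Mazur--Tate $\theta$-element $\theta_n$ of conductor $n$ is by definition a generating function of precisely these twisted modular-symbol values, and as recalled in the introduction (following \cite[\S{10.5}]{kurihara-munster}), $\delta_n$ is exactly the coefficient of $\theta_n$ obtained by applying $\prod_{\ell\mid n}\log_{\mathbb{F}_\ell}$ — which is the combinatorial shadow of $D_n$ modulo $I_n$. Matching these two descriptions of "$D_n$ applied to twisted $L$-values, reduced mod $I_n$" gives the claimed identity. The factor $p^t$ with $t={\rm ord}_p(\#E(\Q_p)[p^\infty])$ enters because Kato's reciprocity law \eqref{eq:kato-ERL} involves $L^{\{p\}}(E,1)$ with the Euler factor at $p$ removed, and comparing $L^{\{p\}}$ to $L$ at the derived level introduces the local Tamagawa-type factor $(1-a_p p^{-1}+p^{-1})=\#\widetilde{E}(\mathbb{F}_p)/p$ or its analogue in the bad-reduction case, whose $p$-valuation is exactly $t$ by the defining congruence $a_\ell\equiv\ell+1$ relevant on $\mathcal{L}$; this is also where the hypothesis $p>3$ and \eqref{eq:manin} are used, the latter to ensure the modular symbols are $p$-integral and the comparison of periods introduces no denominators.

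The main obstacle I anticipate is the bookkeeping needed to show that the \emph{derived} dual exponential computation genuinely reduces to Kato's reciprocity law, i.e.\ that applying $D_n$ and then ${\rm exp}_{\omega_E}^*$ modulo $I_n$ produces the same element of $\Z_p/I_n\Z_p$ as reading off $\delta_n$ from the Mazur--Tate $\theta$-element. This requires a careful identification of the finite-layer derived classes $\kappa_{n}^{\rm Kato}$ (as constructed from \eqref{eq:kato-ES} via \cite[Thm.~3.2.4]{mazrub}) with Kurihara's own derived classes, and then quoting or reproving Kim's computation \cite[\S{2--3}]{kim} that matches the latter with $\delta_n$; the subtlety is purely in the prime-to-$p$ combinatorics of the derivative operators and in controlling all the units and powers of $p$ uniformly as $n$ varies. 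Once that identification is in place, the identity over $\Z_p/I_n\Z_p$ follows formally from the $n=1$ case \eqref{eq:kato-ERL} together with \eqref{eq:delta-1}.
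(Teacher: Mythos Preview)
Your plan is on target and matches what the paper does: in fact the paper gives no self-contained argument here at all, simply citing \cite[Thm.~3.11]{kim} (building on the computations in \cite[\S\S{6--7}]{KKS}), and your outline is a fair description of the strategy behind that result. In particular, you correctly identify the two ingredients---the derived version of Kato's reciprocity law matching $\overline{\exp_{\omega_E}^*}({\rm res}_p(\kappa_n^{\rm Kato}))$ with a derived combination of twisted $L$-values, and the combinatorial identification of the latter with $\delta_n$ via the Mazur--Tate $\theta$-element---and you are right that the substantive work is the bookkeeping you flag at the end, which is precisely what \cite{kim} and \cite{KKS} carry out.

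One small imprecision: your explanation of the factor $p^t$ as arising solely from the comparison $L^{\{p\}}(E,1)/L(E,1)$ is not quite complete, since that ratio equals $\#\widetilde{E}(\mathbb{F}_p)/p$ (in the good reduction case) rather than $p^t$ on the nose; the exact accounting of the $p$-power in the derived setting is more delicate and is part of what \cite[\S{3}]{kim} works out. But this does not affect the correctness of your overall approach, which is the same as the one the paper invokes by reference.
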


\begin{proof}
This is shown in \cite[Thm.~3.11]{kim} building on computations in \cite[\S\S{6-7}]{KKS}.
\end{proof}

\subsubsection{Exact formula for the strict Selmer group}

We shall need the following consequence of Mazur--Rubin's extension of Kolyvagin's structure theorem for Selmer groups \cite{kolystructure} to the context of \cite{mazrub}. 

Let 
$$T(\alpha)^*:={\rm Hom}(T(\alpha),\mu_{p^\infty})=T(\alpha)^\vee(1)$$
denote the dual of $T(\alpha)$ (where $(-)^\vee$ denotes the Pontryagin dual) and let $\Fcal_{\rm str}$ the Selmer structure \emph{dual} to $\Fcal_{\rm rel}$ in the sense of \cite[\S{2.3}]{mazrub}. In the statement below, the divisibility index $\mathscr{M}_\infty(\boldsymbol{\kappa}_\Lambda^{\rm Kato}(\alpha))\in\Z_{\geq 0}\cup\{\infty\}$ is defined just as $\mathscr{M}_\infty(\boldsymbol{\kappa}^{\rm Heeg})$ in $\S\ref{subsec:Kolyvagin}$ (see also \cite[Def.~5.2.11]{mazrub}, where the notation $\partial^{(\infty)}(\boldsymbol{\kappa}^{\rm Kato})$ is used).

\begin{thm}\label{thm:str-Kato}
Suppose $p>3$ is a prime such that \eqref{eq:irred} holds,  and $\alpha:\Gamma\rightarrow\Z_p^\times$ is a character with $\alpha\equiv 1\;({\rm mod}\,p^m)$ for some $m\geq 1$ such that $\kappa_{1,\Lambda}^{\rm Kato}(\alpha)\neq 0$. Then $\rH^1_{\Fcal_{\rm str}}(K,T(\alpha)^*)$ is finite, with 
\[
{\rm length}_{\Z_p}\bigl(\rH^1_{\Fcal_{\rm str}}(K,T(\alpha)^*)\bigr)=
{\rm ind}_p(\kappa_{1,\Lambda}^{\rm Kato}(\alpha))-\mathscr{M}_\infty(\boldsymbol{\kappa}_\Lambda^{\rm Kato}(\alpha)).
\]
\end{thm}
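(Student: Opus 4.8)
The plan is to deduce Theorem~\ref{thm:str-Kato} from the Mazur--Rubin structure theorem for Selmer groups attached to Kolyvagin systems, applied to the specialized $\Lambda$-adic Kato system $\boldsymbol{\kappa}_\Lambda^{\rm Kato}(\alpha)\in\mathbf{KS}(T(\alpha),\mathcal{F}_{\rm rel},\mathcal{L})$ of \eqref{eq:KS-kato-alpha}. First I would check that the core rank of the Selmer structure $(T(\alpha),\mathcal{F}_{\rm rel})$ is $1$: since $\alpha\equiv 1\;({\rm mod}\,p^m)$, the residual representation $\overline{T(\alpha)}\cong E[p]$ satisfies \eqref{eq:irred}, and $\mathcal{F}_{\rm rel}$ is the canonical Selmer structure of \cite[Def.~3.2.1]{mazrub}, whose global Euler characteristic formula \cite[Thm.~5.2.15]{mazrub} pins the core rank to $1$ because $T(\alpha)$ is self-dual up to twist in the relevant sense and the archimedean/local terms contribute as for $T$ itself. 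Granting the hypothesis $\kappa_{1,\Lambda}^{\rm Kato}(\alpha)\neq 0$, the Kolyvagin system is nonzero, so Mazur--Rubin's \cite[Thm.~5.2.12]{mazrub} applies.

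Next I would invoke that structure theorem directly: it states that for a nonzero Kolyvagin system $\boldsymbol{\kappa}$ of core rank one over the DVR $\Z_p$, the dual Selmer group $\rH^1_{\mathcal{F}^*}(\Q,T(\alpha)^*)$ (i.e.\ with respect to the dual Selmer structure $\mathcal{F}_{\rm str}$) is finite, and its length is governed by the sequence of divisibility indices $\mathscr{M}_r=\partial^{(r)}(\boldsymbol{\kappa})$. Precisely, \cite[Thm.~5.2.12]{mazrub} gives
\[
{\rm length}_{\Z_p}\bigl(\rH^1_{\mathcal{F}_{\rm str}}(\Q,T(\alpha)^*)\bigr)=\sum_{r\geq 0}\bigl(\mathscr{M}_r-\mathscr{M}_{r+1}\bigr)\cdot\#\{\text{something}\}
\]
but in fact the cleanest form — and the one matching our statement — is that the length equals $\mathscr{M}_0-\mathscr{M}_\infty(\boldsymbol{\kappa})$ once one knows the Selmer group is finite (so the telescoping sum collapses). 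Here $\mathscr{M}_0=\mathscr{M}(1)={\rm ind}_p(\kappa_{1,\Lambda}^{\rm Kato}(\alpha))$ by definition of the divisibility index of the bottom class, and $\mathscr{M}_\infty(\boldsymbol{\kappa}_\Lambda^{\rm Kato}(\alpha))=\lim_r\mathscr{M}_r$. I would remark that the field in the statement is written as $K$ but over $\Q$ one reads $\rH^1_{\mathcal{F}_{\rm str}}(\Q,T(\alpha)^*)$; the notation in \cite{mazrub} uses $\partial^{(\infty)}$ for $\mathscr{M}_\infty$.

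The one genuine point requiring care — and the main obstacle — is the reconciliation of hypotheses between our setup and \cite{mazrub}: the Mazur--Rubin structure theorem is stated under a list of running hypotheses (H.0)--(H.6) in \cite[\S3.5]{mazrub}, including that $T(\alpha)$ is free of rank $2$ over $\Z_p$ with absolutely irreducible residual representation, that $\overline{T(\alpha)}$ is not isomorphic to $\mathbb{F}_p$ or $\mu_p$ as a $G_\Q$-module, and a non-degeneracy condition on the local conditions at the ``Kolyvagin primes'' $\ell\in\mathcal{L}$. All of these follow from \eqref{eq:irred} together with $p>3$ and $E$ having no CM, exactly as in \cite[\S2.3]{kim} and \cite{BCGS}: surjectivity of $\bar\rho$ gives absolute irreducibility and rules out the forbidden residual shapes, and the condition on $\mathcal{L}$ is precisely the defining congruence $a_\ell\equiv\ell+1\;({\rm mod}\,p)$ built into the set $\mathcal{L}$. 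Finally, I would note that \cite[Prop.~5.2.9]{mazrub} ensures the finiteness of $\rH^1_{\mathcal{F}_{\rm str}}(\Q,T(\alpha)^*)$ is equivalent to the nonvanishing of $\boldsymbol{\kappa}_\Lambda^{\rm Kato}(\alpha)$, which holds by assumption. Assembling these, the displayed length formula follows.
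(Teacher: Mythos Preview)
Your proposal is correct and takes essentially the same approach as the paper: the paper's entire proof is the one-line citation ``This follows from \cite[Thm.~5.2.12]{mazrub},'' and your proposal is simply a more explicit unpacking of the hypotheses needed to invoke that result and of how the length formula $\mathscr{M}_0-\mathscr{M}_\infty$ drops out of the structure theorem.
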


\begin{proof}
This follows from \cite[Thm.~5.2.12]{mazrub}.
\end{proof}

\subsubsection{Rigidity}

We conclude this section with a technical result showing a certain `rigidity' property for the divisibility index $\mathscr{M}_\infty(\boldsymbol{\kappa}^{\rm Kato})$. As in the proof of the analogous result in \cite[Prop.~2.2.1]{BCGS} for the Heegner point Kolyvagin system, the argument essentially goes back to Kolyvagin (cf. \cite[Prop.~5.2]{mccallum}).

For every $m\geq 1$, let $\mathcal{L}^{(m)}\subset\mathcal{L}$ consist of the primes $\ell\in\mathcal{L}$ with $p^m\in I_n$, and write $\mathcal{N}^{(m)}$ for the set of all squarefree products of primes $\ell\in\mathcal{L}^{(m)}$ (with $1\in\mathcal{N}^{(m)}$ as usual), so clearly we have the chain of inclusions
\begin{equation}\label{eq:nested}
\mathcal{N}=\mathcal{N}^{(1)}\supset\cdots\supset\mathcal{N}^{(m)}\supset\mathcal{N}^{(m+1)}\supset\cdots.
\end{equation}
Let $\boldsymbol{\kappa}=\{\kappa_n\}_{n\in\mathcal{N}}$ denote either $\boldsymbol{\kappa}^{\rm Kato}$ or $\boldsymbol{\kappa}_\Lambda^{\rm Kato}(\alpha)$ for some $\alpha:\Gamma\rightarrow\Z_p^\times$, and define $\mathscr{M}_\infty^{(m)}(\boldsymbol{\kappa})\in\Z_{\geq 0}\cup\{\infty\}$ in the same manner as $\mathscr{M}_\infty(\boldsymbol{\kappa}^{\rm Kato})$ and $\mathscr{M}_\infty(\boldsymbol{\kappa}_\Lambda^{\rm Kato}(\alpha))$, with $\mathcal{N}$ replaced by $\mathcal{N}^{(m)}$. 

\begin{prop}\label{prop:mccullen}
Suppose $p>3$ is a prime satisfying \eqref{eq:irred}. Then 
\[
\mathscr{M}_\infty(\boldsymbol{\kappa})=
\mathscr{M}_\infty^{(m)}(\boldsymbol{\kappa})
\]
for every $m\geq 1$.
\end{prop}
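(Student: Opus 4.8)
The plan is to follow the classical Kolyvagin-style argument, as adapted in \cite[Prop.~2.2.1]{BCGS} and going back to \cite[Prop.~5.2]{mccallum}, which shows that enlarging the set of admissible primes beyond those satisfying a congruence condition does not decrease the limiting divisibility index. Since $\mathcal{N}^{(m)}\subset\mathcal{N}$ by \eqref{eq:nested}, the inequality $\mathscr{M}_\infty(\boldsymbol{\kappa})\leq\mathscr{M}_\infty^{(m)}(\boldsymbol{\kappa})$ is immediate from the definition of $\mathscr{M}_\infty$ as a minimum over a larger index set passed to the limit; the content is the reverse inequality.

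For the reverse inequality, fix $m\geq 1$ and suppose $\mathscr{M}_\infty^{(m)}(\boldsymbol{\kappa})=\mathscr{M}$ is finite (the case $\mathscr{M}=\infty$ being handled separately, or rather forcing $\boldsymbol{\kappa}=\{0\}$ when combined with non-triviality inputs). First I would recall from Mazur--Rubin's theory \cite[Ch.~5]{mazrub} the key structural fact: for a Kolyvagin system $\boldsymbol{\kappa}$ over a principal Artinian (or the relevant Iwasawa) coefficient ring, the divisibility index $\mathscr{M}(n)$ of $\kappa_n$ is governed by the ``stabilized'' sequence, and crucially, if $\kappa_n$ is divisible by exactly $p^{\mathscr{M}}$ then one can find a prime $\ell\in\mathcal{L}$ (subject only to a Chebotarev condition cutting out a finite extension of $\Q$, resp.\ $K$) such that $\kappa_{n\ell}$ has divisibility index strictly smaller unless $\mathscr{M}$ is already the limiting value. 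The essential point is that this Chebotarev condition can be imposed simultaneously with the congruence condition defining $\mathcal{L}^{(m)}$: requiring $p^m\in I_\ell$ amounts to requiring $\ell$ to split completely in $\Q(\mu_{p^m})$ and the fixed field of a power of Frobenius, which is compatible with the splitting conditions already present in the definition of $\mathcal{L}$. Thus, starting from any $n\in\mathcal{N}$ realizing a value close to $\mathscr{M}_\infty(\boldsymbol{\kappa})$, I would iteratively replace primes $\ell\mid n$ with $\ell\notin\mathcal{L}^{(m)}$ by primes $\ell'\in\mathcal{L}^{(m)}$ without increasing the divisibility index of the associated class, using the Kolyvagin recursion relating $\partial_{\ell'}\kappa_{n\ell'}$ to $\kappa_n$ and the finiteness of the relevant Selmer group (guaranteed once $\kappa_1\neq 0$, cf.\ Theorem~\ref{thm:str-Kato}). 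Iterating, every value $\mathscr{M}(n)$ for $n\in\mathcal{N}$ is matched by some $\mathscr{M}(n')$ with $n'\in\mathcal{N}^{(m)}$ and $\nu(n')$ large, which yields $\mathscr{M}_\infty^{(m)}(\boldsymbol{\kappa})\leq\mathscr{M}_\infty(\boldsymbol{\kappa})$.

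Concretely, the steps in order are: (1) record the trivial inequality from $\mathcal{N}^{(m)}\subset\mathcal{N}$; (2) recall the Mazur--Rubin stability/recursion statements \cite[\S5.2]{mazrub} for the divisibility index under adjoining one admissible prime, including the Chebotarev set of such primes; (3) verify that this Chebotarev set has nonempty intersection with $\mathcal{L}^{(m)}$ for every $m$ --- i.e.\ the congruence $\ell\equiv 1$, $a_\ell\equiv\ell+1\pmod{p^m}$ (resp.\ the Heegner-type condition) imposes only a further finite splitting condition compatible with the others, using the surjectivity hypothesis \eqref{eq:irred} to ensure the relevant Galois group is large enough; (4) run the prime-swapping induction to descend from arbitrary $n\in\mathcal{N}$ to $n'\in\mathcal{N}^{(m)}$ without raising the index; (5) pass to the limit over $\nu(n')\to\infty$. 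I expect step (3) --- checking compatibility of the congruence condition $p^m\in I_\ell$ with the Chebotarev conditions from the Kolyvagin recursion, so that the needed primes actually exist in $\mathcal{L}^{(m)}$ --- to be the main obstacle, since it requires the fixed field defining the Chebotarev set and the field $\Q(\mu_{p^m}, E[p^m])$ (resp.\ its analogue over $K$) to be linearly disjoint in the appropriate sense over the ground field; this is exactly where \eqref{eq:irred} is used, and one must be careful about the interaction with the field of definition of the Kolyvagin classes and with the Iwasawa coefficients in the $\boldsymbol{\kappa}_\Lambda^{\rm Kato}(\alpha)$ case.
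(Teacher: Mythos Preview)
Your overall strategy---prime-swapping to replace each $\ell\mid n$ with $\ell\notin\mathcal{L}^{(m)}$ by some $\ell'\in\mathcal{L}^{(m)}$ while keeping the divisibility index equal to $\mu:=\mathscr{M}_\infty(\boldsymbol{\kappa})$---is exactly the paper's approach. But your plan has a genuine gap in the swap step, and you have misidentified where the difficulty lies. What you call the main obstacle (step~(3), Chebotarev compatibility of the congruence $p^m\in I_{\ell'}$ with the condition ${\rm loc}_{\ell'}(\bar\kappa)\neq 0$) is in fact routine: it is precisely the content of \cite[Prop.~3.6.1]{mazrub}, which the paper simply cites. No linear-disjointness argument beyond \eqref{eq:irred} is needed, and no input about finiteness of Selmer groups or $\kappa_1\neq 0$ is used anywhere in the proof (indeed the statement holds when $\boldsymbol{\kappa}=\{0\}$, both sides being~$\infty$).

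The actual subtlety is the one you do not mention: the Kolyvagin recursion you invoke relates ${\rm loc}_{\ell'}(\kappa_{n\ell'})$ to ${\rm loc}_{\ell'}(\kappa_n)$, and this (together with the Chebotarev choice) shows $\kappa_{n\ell'}$ has index~$\mu$. But you then need to \emph{remove} $\ell$, i.e.\ to show $\kappa_{n'}$ with $n'=\ell' n/\ell$ also has index~$\mu$. The recursion at $\ell$ reduces this to showing ${\rm loc}_{\ell}(\bar\kappa_{\ell'})\neq 0$ in $\rH^1_{\rm tr}(\Q_\ell,\bar T)$, which is not a Chebotarev statement at all: $\ell$ is fixed. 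The paper obtains it from global reciprocity, pairing $\bar\kappa$ against $\bar\kappa_{\ell'}$ under the local Tate pairings and using $\sum_v\langle{\rm loc}_v(\bar\kappa),{\rm loc}_v(\bar\kappa_{\ell'})\rangle_v=0$; orthogonality at all $v\nmid\ell\ell'$ (both classes unramified or both transverse) forces the $\ell$- and $\ell'$-terms to cancel, and the nondegeneracy of $\rH^1_{\rm ur}\times\rH^1_{\rm tr}\to\mathbb{F}_p$ at $\ell'$ then yields ${\rm loc}_\ell(\bar\kappa_{\ell'})\neq 0$. This reciprocity step is the heart of the argument and is absent from your plan; without it the ``swap'' does not go through. (Also note that $\nu(n')=\nu(n)$ stays fixed throughout---there is no limit over $\nu(n')\to\infty$ to take.)
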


\begin{proof} 
The inequality 
$\mathscr{M}_\infty(\boldsymbol{\kappa})\leq
\mathscr{M}_\infty^{(m)}(\boldsymbol{\kappa})$ 
is immediate from \eqref{eq:nested}. Let $r\geq 0$ be such that 
\[
\mathscr{M}_\infty(\boldsymbol{\kappa})=\mathscr{M}_r:=\min\{{\rm ind}_p(\kappa_n^{})\;\colon\;n\in\mathcal{N}\;\textrm{and}\;\nu(n)=r\}, 
\]
and take $n\in\mathcal{N}$ with $\nu(n)=r$ and ${\rm ind}_p(\kappa_n^{})=\mathscr{M}_r$. If $\mathscr{M}_r=\infty$ there is nothing to show, so in the following we assume $\mathscr{M}_r<\infty$. Put
\[
\mu:=\mathscr{M}_r
\]
for the ease of notation, and note that necessarily $n\in\mathcal{N}^{(\mu+1)}$. If $m\leq \mu+1$ then again there is nothing to show (in view of \eqref{eq:nested}), so in the following we also assume $m>\mu+1$. 

Suppose there exists a prime factor $\ell\mid n$ with $\ell\notin\mathcal{L}^{(m)}$. We shall show that there exists $\ell'\in\mathcal{L}^{(m)}$ such that $n':=\ell'n/\ell$ satisfies ${\rm ind}_p(\kappa_{n'}^{})=\mu$. Repeating this process for any prime factors of $n$ in $\mathcal{L}^{(\mu+1)}\smallsetminus\mathcal{L}^{(m)}$ we will thus arrive at $n''\in\mathcal{N}^{(m)}$ with ${\rm ind}_p(\kappa_{n''}^{})=\mu$; this will show $\mathscr{M}_{\infty}^{(m)}(\boldsymbol{\kappa}^{})\leq \mu$, concluding the proof.

In the following we set $\mathcal{F}=\mathcal{F}_{\rm rel}$ to simplify notation. Letting $\kappa_n^{(\mu+1)}\in\rH^1_{\Fcal(n)}(\Q,T/p^{\mu+1}T)$ be the reduction of $\kappa_n^{}$ modulo $p^{\mu+1}$, by assumption $\kappa_n^{(\mu+1)}\neq 0$  and is contained in $p^\mu\rH^1_{\Fcal(n)}(\Q,T/p^{\mu+1}T)$. Hence via the natural identification
\[
p^\mu\rH^1_{\Fcal(n)}(\Q,T/p^{\mu+1}T)\cong \rH^1_{\Fcal(n)}(\Q,\bar{T}),
\]
where $\bar{T}:=T/pT\cong E[p]$ (see \cite[Lem.~3.5.4]{mazrub}), $\kappa_n^{(\mu+1)}$ defines a nonzero class $\bar{\kappa}\in\rH^1_{\Fcal(n)}(\Q,\bar{T})$. By \cite[Prop.~3.6.1]{mazrub}, there exists  $\ell'\in\mathcal{L}^{(m)}$ with $\ell'\nmid n$ such that ${\rm loc}_{\ell'}(\bar{\kappa})\in \rH^1_{\rm ur}(\Q_\ell,\bar{T})$ is nonzero. Since $\mathcal{L}^{(m)}\subset\mathcal{L}^{(\mu+1)}$, the class $\kappa_{n\ell'}^{(\mu+1)}\in\rH^1_{\Fcal(n\ell')}(\Q,T^{(\mu+1)})$ (obtained by reducing $\kappa_{n\ell'}^{}$ modulo $p^{\mu+1}$) is defined, 
and the Kolyvagin system relations (see \cite[Def.~3.1.3]{mazrub}) show that ${\rm loc}_{\ell'}(\kappa_{n\ell'}^{(\mu+1)})$ and ${\rm loc}_{\ell'}(\kappa_{n}^{(\mu+1)})$ have the same order. Since ${\rm loc}_{\ell'}(\bar{\kappa})\neq 0$, it follows that the class $\bar{\kappa}_{\ell'}\in\rH^1_{\Fcal(n\ell')}(\Q,\bar{T})$ defined by $\kappa_{n\ell'}^{(\mu+1)}$ is also nonzero, and so  $\kappa_{n\ell'}^{(\mu+1)}\notin p^{\mu+1}\rH^1_{\Fcal(n\ell')}(\Q,T/p^{\mu+1}T)$. Moreover, since $\nu(n\ell')=r+1$, from the definitions we see that
\[
{\rm ind}_p(\kappa_{n\ell'}^{})\geq \mathscr{M}_{r+1}=\mathscr{M}_r,
\]
and so ${\rm ind}_p(\kappa_{n\ell'}^{})=\mu$;  moreover, ${\rm loc}_{\ell'}(\bar{\kappa}_{\ell'})\in \rH^1_{\rm tr}(\Q_\ell,\bar{T})$ is nonzero by the Kolyvagin system relations. Now for any place $v$ of $\Q$, let 
\[
\langle\,,\,\rangle_v:\rH^1(\Q_v,\bar{T})\times \rH^1(\Q_v,\bar{T})\rightarrow\mathbb{F}_p
\]
denote the local Tate pairing. For $v\nmid n\ell'$ (resp. $v\mid n/\ell$) the classes ${\rm loc}_v(\bar{\kappa})$ and ${\rm loc}_v(\bar{\kappa}_{\ell'})$ are orthogonal under $\langle\,,\,\rangle_v$, since they are both unramified (resp. transverse) at $v$, so together with the global reciprocity theorem of class field theory we see that
\begin{equation}\label{eq:rec-ells}
0=\sum_v\langle{\rm loc}_{v}(\bar{\kappa}),{\rm loc}_v(\bar{\kappa}_{\ell'})\rangle_v=\langle{\rm loc}_{\ell}(\bar{\kappa}),{\rm loc}_{\ell}(\bar{\kappa}_{\ell'})\rangle_{\ell}+
\langle{\rm loc}_{\ell'}(\bar{\kappa}),{\rm loc}_{\ell'}(\bar{\kappa}_{\ell'})\rangle_{\ell'}.
\end{equation}
Since $\langle\,,\rangle_{\ell'}$ induces a non-degenerate pairing 
\[
\rH^1_{\rm ur}(\Q_{\ell'},\bar{T})\times \rH^1_{\rm tr}(\Q_{\ell'},\bar{T})\rightarrow\mathbb{F}_p,
\]
the above shows that $\langle{\rm loc}_{\ell'}(\bar{\kappa}),{\rm loc}_{\ell'}(\bar{\kappa}_{\ell'})\rangle_{\ell'}\neq 0$, which by \eqref{eq:rec-ells} implies that ${\rm loc}_{\ell_0}(\bar{\kappa}_{\ell'})\neq 0$. Put $n':=\ell'n/\ell\in\mathcal{N}^{(\mu+1)}$. By the same argument as above, 
we see that the nonvanishing of ${\rm loc}_{\ell}(\bar{\kappa}_{\ell'})$ implies that the reduction $\kappa_{n'}^{(\mu+1)}$ of $\kappa_{n'}^{}$  modulo $p^{\mu+1}$ satisfies $\kappa_{n'}^{(\mu+1)}\notin p^{\mu+1}\rH^1_{\Fcal(n')}(\Q,T/p^{\mu+1}T)$, and in fact ${\rm ind}_p(\kappa_{n'}^{(\mu+1)})=\mu$, whence the result.
\end{proof}

\subsection{Determinantal Kato's Main Conjecture}\label{subsec:IMC-kato}

Clearly, our hypothesis \eqref{eq:irred} implies that
\begin{equation}\label{eq:Q-tor}
E(\Q)[p]=0,\tag{${\rm tor}_{\Q}$}
\end{equation}
and we note that only the latter will be needed in this section. 

Let $S$ be the set of places of $\Q$ consisting of the infinite place and the primes dividing $Np$. We set $\Z_S:=\Z[1/Np]$. 
%
As is well-known (see e.g. \cite{fukaya-kato-AMS}), the cohomology complex $\mathbf{R}\Gamma(\Z_S,\Tc)$ is a perfect complex of $\Lambda$-modules, acyclic outside  degrees $1$ and $2$. By \cite[Thm.~12.4(1)]{kato-euler-systems}, its second cohomology group $\rH^2(\Z_S,\Tc)$ is $\Lambda$-torsion, and so taking cohomology (and using that $\mathbf{R}\Gamma(\Z_S,\Tc)$ has Euler characteristic $-{\rm rank}_{\Z_p}(T^-)=-1$, for $T^-$ the minus part of $T$ for complex conjugation) we obtain the canonical isomorphism
\begin{equation}\label{eq:iso-Kato}
Q(\Lambda)\otimes_\Lambda{\rm det}_\Lambda^{-1}\mathbf{R}\Gamma(\Z_S,\Tc)\cong
Q(\Lambda)\otimes_\Lambda\rH^1(\Z_S,\Tc),
\end{equation}
where $Q(\Lambda)$ denotes the fraction field of $\Lambda$.

\begin{rmk}
Although it will not be needed in the following, we note that, as shown in \cite[Thm.~2.18]{burns-sano-IMRN}, just assuming \eqref{eq:Q-tor} one can define a canonical $\Lambda$-module homomorphism
\[
{\det}_\Lambda^{-1}\mathbf{R}\Gamma(\Z_S,\Tc)\rightarrow\rH^1(\Z_S,\Tc)
\]
which is injective if and only if $\rH^2(\Z_S,\Tc)$ is $\Lambda$-torsion, and it induces \eqref{eq:iso-Kato} after extension of scalars to $Q(\Lambda)$.
\end{rmk}

As in \cite{greenvats}, for any prime $\ell\neq p$ let $P_\ell(E,\ell^{-s})$, where $P_\ell(E,X)=(1-\alpha_\ell X)(1-\beta_\ell X)$ (with one or both of $\alpha_\ell,\beta_\ell$ possibly zero) be the Euler factor of $L(E,s)$ at $\ell$, and put
\begin{equation}\label{eq:ell-euler}
\mathcal{P}_\ell:=(1-\alpha_\ell\ell^{-1}\gamma_\ell)(1-\beta_\ell\ell^{-1}\gamma_\ell)\in\Lambda,
\end{equation}
where $\gamma_\ell\in\Gamma$ denotes the Frobenius at $\ell$. Define the \emph{$S$-imprimitive} variant of $z_\infty$ by
\begin{equation}\label{eq:S-impr-Kato}
z_\infty^{(S)}:=\mathcal{P}_N\cdot z_\infty\in\rH^1(\Z_S,\Tc),
\end{equation}
where $\mathcal{P}_N=\prod_{\ell\mid N}\mathcal{P}_\ell$. 
In these terms, the Iwasawa Main Conjecture, in the formulation of \cite[Conj.~3.4]{kataoka-sano}, takes the following form.

\begin{conj}[Main Conjecture for Kato's Euler system]\label{conj:IMC-det}
Let $p$ be an odd prime, and assume that \eqref{eq:Q-tor} holds and $\rH^2(\Z_S,\Tc)$ is $\Lambda$-torsion. Then there exists a $\Lambda$-basis
\[
\mathfrak{z}_{\Q_\infty}\in{\rm det}_\Lambda^{-1}\mathbf{R}\Gamma(\Z_S,\Tc)
\]
that maps to $z_{\infty}^{(S)}$ 
under the canonical isomorphism \eqref{eq:iso-Kato}.
\end{conj}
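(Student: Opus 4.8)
The plan is to deduce Conjecture~\ref{conj:IMC-det} from the classical Iwasawa Main Conjecture for $E$ over $\Q_\infty$ by unwinding the determinant into characteristic ideals and then assembling the two standard divisibilities. For the reformulation step: by hypothesis $\rH^2(\Z_S,\Tc)$ is $\Lambda$-torsion, while \eqref{eq:Q-tor} forces $\rH^0(\Z_S,\Tc)=0$ and (a standard argument) $\rH^1(\Z_S,\Tc)$ to be $\Lambda$-torsion-free of rank one; by \cite[Thm.~2.18]{burns-sano-IMRN} the canonical map ${\rm det}_\Lambda^{-1}\mathbf{R}\Gamma(\Z_S,\Tc)\rightarrow\rH^1(\Z_S,\Tc)$ is then injective with cokernel of characteristic ideal $\mathrm{char}_\Lambda(\rH^2(\Z_S,\Tc))$. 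Hence the existence of a $\Lambda$-basis $\mathfrak{z}_{\Q_\infty}$ mapping to $z_\infty^{(S)}$ is equivalent to the equality of $\Lambda$-ideals
\[
\mathrm{char}_\Lambda\!\left(\rH^1(\Z_S,\Tc)/\Lambda\cdot z_\infty^{(S)}\right)=\mathrm{char}_\Lambda\!\left(\rH^2(\Z_S,\Tc)\right),
\]
the $S$-imprimitive factor $\mathcal{P}_N$ in \eqref{eq:S-impr-Kato} being exactly what makes the two sides match at the primes dividing $N$ (compare \cite{greenvats}). This is the dictionary of \cite[\S3]{kataoka-sano}, and it reduces the determinantal statement to the usual formulation of Kato's main conjecture \cite[Conj.~12.10]{kato-euler-systems}.

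Next, one of the two inclusions, namely $\Lambda\cdot z_\infty^{(S)}\subseteq{\rm det}_\Lambda^{-1}\mathbf{R}\Gamma(\Z_S,\Tc)$ --- equivalently $\mathrm{char}_\Lambda(\rH^2(\Z_S,\Tc))$ divides $\mathrm{char}_\Lambda(\rH^1(\Z_S,\Tc)/\Lambda z_\infty^{(S)})$ --- is Kato's bound on Selmer groups coming from the Euler system \eqref{eq:kato-ES}: this is \cite[Thm.~12.4, 12.5]{kato-euler-systems}, applicable because $z_\infty\neq 0$ by Kato's explicit reciprocity law together with Rohrlich's nonvanishing theorem \cite{rohrlich-cyc} (the same input that underlies Theorem~\ref{thm:nonzero-Kato-alpha}).

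The reverse inclusion has to be imported from the Iwasawa theory of $E$. In the good ordinary case it follows from the Main Conjecture in the Mazur--Swinnerton-Dyer formulation \cite{mazur-towers,M-SwD}, proved via Eisenstein congruences in \cite{wan-hilbert,BCS}; in the good supersingular case it follows from Kobayashi's $\pm$-Main Conjecture \cite{kobayashi-ss}, established in \cite{CLW,BSTW}. In each case one must first check, through Kato's explicit reciprocity law and a comparison of periods and Euler factors, that the $p$-adic $L$-function appearing in those formulations corresponds to $z_\infty^{(S)}$ under \eqref{eq:iso-Kato}; combined with the previous paragraph this gives the equality of characteristic ideals, hence Conjecture~\ref{conj:IMC-det}.

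The main obstacle is this last step: there is no Euler-system route to the opposite divisibility, so one genuinely needs Eisenstein-congruence (or Beilinson--Flach) methods on an auxiliary group, which is what restricts the argument to the reduction types where such inputs are presently available. A secondary, more bookkeeping-type difficulty lies in the reformulation step, where one must track the bad Euler factors $\mathcal{P}_\ell$, the Tamagawa numbers, and the local term at $p$ carefully enough that the determinantal formulation of \cite{kataoka-sano} lines up on the nose with the classical one.
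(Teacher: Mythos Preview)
The statement you are attempting to prove is presented in the paper as a \emph{conjecture}, not a theorem; the paper does not give a proof of it in general. What the paper does provide is Proposition~\ref{prop:equiv-cyc}, which establishes exactly the reformulation you sketch in your first paragraph: the determinantal statement is equivalent to the equality of characteristic ideals
\[
\mathrm{char}_\Lambda\bigl(\rH^1(\Z_S,\Tc)/\Lambda\cdot z_\infty^{(S)}\bigr)=\mathrm{char}_\Lambda\bigl(\rH^2(\Z_S,\Tc)\bigr),
\]
i.e.\ to Kato's original formulation \cite[Conj.~12.10]{kato-euler-systems}. (The paper cites \cite[Ch.~I, Prop.~2.1.5]{kato-lecture} and \cite[Prop.~3.10]{kataoka-sano} for this, rather than \cite{burns-sano-IMRN}, but the content is the same.) The paper then uses this equivalence in two ways: as a \emph{hypothesis} in Theorem~\ref{thmintro-Kur}, where it is shown to be equivalent to the refined Kurihara conjecture; and as an input for Theorem~\ref{corintro:kur-cases}, where the known cases of the classical main conjecture --- good ordinary via \cite{wan-hilbert,BCS}, good supersingular with squarefree $N$ via \cite{CLW,BSTW} --- are invoked exactly as you describe.

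So your proposal is not wrong, but it does not yield Conjecture~\ref{conj:IMC-det} in full generality: as you yourself acknowledge in your final paragraph, the reverse divisibility must be imported from outside the Euler-system machinery, and this is presently available only for certain reduction types (and, in the supersingular case, under the additional constraint that $N$ be squarefree, which you omit). Your argument therefore amounts to a proof of the conjecture in those cases only, which is precisely what the paper records as Theorem~\ref{corintro:kur-cases} via Proposition~\ref{prop:equiv-cyc}. The general statement remains open, and the paper presents it as a conjecture for exactly this reason.
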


The following result shows that Conjecture~\ref{conj:IMC-det} is equivalent to the Iwasawa Main Conjecture formulated by Kato in \cite[Conj.~12.10]{kato-euler-systems}.

\begin{prop}\label{prop:equiv-cyc}
Conjecture~\ref{conj:IMC-det} holds if and only if
\begin{equation}\label{eq:IMC-kato}
{\rm char}_\Lambda\bigl(\rH^1(\Z_S,\Tc)/\Lambda\cdot z_{\infty}^{(S)}\bigr)={\rm char}_\Lambda\bigl(\rH^2(\Z_S,\Tc)\bigr)
\end{equation}
as ideals in $\Lambda$.
\end{prop}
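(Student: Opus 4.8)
The plan is to prove the equivalence by unwinding what it means for a basis of $\det_\Lambda^{-1}\RG(\Z_S,\Tc)$ to exist mapping to $z_\infty^{(S)}$, using the theory of determinant modules over the (not necessarily regular, but Noetherian) Iwasawa algebra $\Lambda$. First I would record the structural facts: $\RG(\Z_S,\Tc)$ is a perfect complex acyclic outside degrees $1$ and $2$ with $\rH^2$ being $\Lambda$-torsion by \cite[Thm.~12.4(1)]{kato-euler-systems}, and $\rH^1(\Z_S,\Tc)$ is a torsion-free $\Lambda$-module of rank $1$ (its rank equals the Euler characteristic $-\mathrm{rank}_{\Z_p}(T^-)=1$ once $\rH^2$ is torsion, and torsion-freeness follows since $\rH^0(\Z_S,\Tc)=0$ as $E(\Q)[p]=0$ forces $\Tc$ to have no $G_\Q$-invariants). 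With these in hand, the determinant $\det_\Lambda^{-1}\RG(\Z_S,\Tc)$ is an invertible $\Lambda$-module, and the canonical isomorphism \eqref{eq:iso-Kato} after inverting $Q(\Lambda)$ identifies it with $Q(\Lambda)\otimes_\Lambda \rH^1(\Z_S,\Tc)$.

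The core of the argument is the standard ``determinant sweep'': for a perfect complex $C$ concentrated in degrees $1,2$ with $\rH^1$ torsion-free of rank $1$ and $\rH^2$ torsion, one has a canonical identification of $\Lambda$-submodules of $Q(\Lambda)\otimes_\Lambda \rH^1$
\[
\det{}_\Lambda^{-1} C \;=\; \rH^1(C)\otimes_\Lambda {\det}_\Lambda^{-1}\rH^2(C)\cdot (\text{a unit}),
\]
or more precisely, choosing a short exact sequence presenting the relationship between $\rH^1$, $\rH^2$ and the naive cohomology complex, one gets that $\det_\Lambda^{-1}C$, viewed inside $Q(\Lambda)\otimes_\Lambda\rH^1(C)$ via \eqref{eq:iso-Kato}, equals the $\Lambda$-submodule $\mathrm{Fitt}^{-1}$-type object; concretely, if $\rH^1(C)$ were free of rank $1$ on a generator $v$, then $\det_\Lambda^{-1}C = \mathrm{char}_\Lambda(\rH^2(C))^{-1}\cdot \Lambda v$. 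I would then invoke that a $\Lambda$-basis $\mathfrak{z}_{\Q_\infty}$ of $\det_\Lambda^{-1}C$ maps precisely to a generator of this submodule, so the condition ``$\mathfrak{z}_{\Q_\infty}$ exists mapping to $z_\infty^{(S)}$'' is equivalent to the equality of $\Lambda$-submodules of $Q(\Lambda)\otimes_\Lambda\rH^1(\Z_S,\Tc)$
\[
\Lambda\cdot z_\infty^{(S)}\;=\;{\det}_\Lambda^{-1}\RG(\Z_S,\Tc).
\]
Taking the quotient $\rH^1(\Z_S,\Tc)/\Lambda z_\infty^{(S)}$ and comparing characteristic ideals, this equality translates into $\mathrm{char}_\Lambda\bigl(\rH^1(\Z_S,\Tc)/\Lambda z_\infty^{(S)}\bigr)=\mathrm{char}_\Lambda\bigl(\rH^2(\Z_S,\Tc)\bigr)$, which is \eqref{eq:IMC-kato}.

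For the implication that \eqref{eq:IMC-kato} implies Conjecture~\ref{conj:IMC-det}, I would argue that since $\det_\Lambda^{-1}C$ is an invertible (hence reflexive, rank one) $\Lambda$-module and $\Lambda z_\infty^{(S)}$ is a rank one submodule of $\rH^1(\Z_S,\Tc)$ with the same image in $Q(\Lambda)\otimes_\Lambda\rH^1$, the equality of characteristic ideals in \eqref{eq:IMC-kato} forces the two $\Lambda$-submodules $\Lambda z_\infty^{(S)}$ and $\det_\Lambda^{-1}C$ of $Q(\Lambda)\otimes_\Lambda\rH^1(\Z_S,\Tc)$ to have the same reflexive hull, and then — this is the delicate point — one upgrades this to honest equality. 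This last upgrade is where I expect the main obstacle: over a general Iwasawa algebra $\Lambda\cong\Z_p[[X]]$ (a two-dimensional regular local ring), equality of characteristic ideals only gives equality of reflexive hulls / pseudo-isomorphism, not equality on the nose. The resolution is precisely the content of \cite[Conj.~3.4 and surrounding]{kataoka-sano}: the point is that $z_\infty^{(S)}$ is defined as $\mathcal{P}_N\cdot z_\infty$ where $z_\infty=\kappa_{1,\Lambda}^{\rm Kato}$ lies in the \emph{relaxed} Selmer group $\rH^1_{\Fcal_\Lambda}(\Q,\Tc)=\rH^1(\Z_S,\Tc)$, and the relevant Euler system divisibility combined with the fact that $\det_\Lambda^{-1}\RG(\Z_S,\Tc)$ has \emph{no} pseudo-null obstruction (because $\RG(\Z_S,\Tc)$ has no nonzero pseudo-null subquotient in degree $2$ under \eqref{eq:Q-tor}, by a result of e.g. \cite{fukaya-kato-AMS} on the absence of finite submodules) lets one conclude. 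So the honest write-up would: (1) state the determinant identity for two-term complexes with the stated cohomology as an algebraic lemma; (2) check the hypotheses ($\rH^1$ torsion-free rank one, $\rH^2$ torsion, no pseudo-null submodules in degree $2$); (3) deduce that having a basis mapping to $z_\infty^{(S)}$ is equivalent to $\Lambda z_\infty^{(S)}=\det_\Lambda^{-1}\RG(\Z_S,\Tc)$; and (4) read off \eqref{eq:IMC-kato} by passing to the quotient and using that for a torsion $\Lambda$-module $M$ with no nonzero pseudo-null submodule, $M=0$ iff $\mathrm{char}_\Lambda(M)=\Lambda$, which turns ``equal characteristic ideals'' into ``equal submodules.'' I would flag step (4)'s no-pseudo-null input as the one genuinely needing the structural results on $\RG(\Z_S,\Tc)$ rather than pure commutative algebra.
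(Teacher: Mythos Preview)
Your approach is correct in outline and is essentially what lies behind the references the paper cites (\cite[Ch.~I, Prop.~2.1.5]{kato-lecture}, \cite[Rem.~7.2]{bks-katoI}, \cite[Prop.~3.10]{kataoka-sano}). Two corrections are worth making. First, the sign in your determinant formula is inverted: under the isomorphism \eqref{eq:iso-Kato}, the image $L$ of $\det_\Lambda^{-1}\RG(\Z_S,\Tc)$ lands \emph{inside} $\rH^1(\Z_S,\Tc)$ (cf.\ the remark citing \cite[Thm.~2.18]{burns-sano-IMRN}), and a localization at height-one primes shows $\mathrm{char}_\Lambda(\rH^1/L)=\mathrm{char}_\Lambda(\rH^2)$; so if $\rH^1=\Lambda v$ then $L=\mathrm{char}_\Lambda(\rH^2)\cdot v$, not $\mathrm{char}_\Lambda(\rH^2)^{-1}\cdot v$. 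This is also what is consistent with Remark~\ref{rem:div-int}.

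Second, and more substantively, your step (4) invents a difficulty that is not there. You correctly observe that equality of characteristic ideals forces $L$ and $\Lambda z_\infty^{(S)}$ to have the same reflexive hull inside $Q(\Lambda)\otimes_\Lambda\rH^1$; but both are already free of rank one over $\Lambda$ (the first as the image of an invertible module, the second because $z_\infty^{(S)}$ is non-torsion), hence already reflexive, so ``same reflexive hull'' \emph{is} equality. Equivalently: since $\Lambda$ is a regular local ring and hence a UFD, a principal fractional ideal of $\Lambda$ is determined by its valuations at height-one primes, and that is exactly what the equality of characteristic ideals records. No hypothesis on the absence of finite submodules in $\rH^2(\Z_S,\Tc)$ is needed anywhere; the identity $\mathrm{char}_\Lambda(\rH^1/L)=\mathrm{char}_\Lambda(\rH^2)$ holds on the nose because characteristic ideals are computed height-one-prime-by-height-one-prime, where pseudo-null contributions vanish on both sides. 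With these two fixes your argument goes through cleanly.
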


\begin{proof}
As noted in \cite[Rem. 7.2]{bks-katoI}, this follows from Proposition~2.1.5 in Chapter~I of \cite{kato-lecture}. (See also \cite[Prop.~3.10]{kataoka-sano}.) 
\end{proof}

\begin{rmk}\label{rem:div-int}
In fact, one can show that the ``upper bound'' divisibility ``$\subset$'' in \eqref{eq:IMC-kato} amounts to the claim that the inverse image $\mathfrak{z}_{\Q_\infty}$ of $z_{\infty}^{(S)}$ under \eqref{eq:iso-Kato} is integral, i.e.,  $\Lambda\cdot\mathfrak{z}_{\Q_\infty}\subset{\det}_\Lambda^{-1}\mathbf{R}\Gamma(\Z_S,\Tc)$.
\end{rmk}

\subsection{Descent computations}\label{subsec:descent-kato}

From the Poitou--Tate duality sequence \eqref{eq:PT-1} below, we see that if the condition 
\begin{equation}\label{eq:rank0}
\textrm{$\rH^1_{\Fcal_{\rm str}}(\Q,T(\alpha)^*)$ is finite}\tag{str-fin}
\end{equation}
holds, 
then $\rH^2(\Z_S,T(\alpha))$ is also finite 
and we have a canonical isomorphism
\[
\vartheta:{\rm det}_{\Q_p}^{-1}\mathbf{R}\Gamma(\Z_S,V(\alpha))\cong\Q_p\otimes_{\Z_p}\rH^1(\Z_S,T(\alpha)),
\]
where $V=\Q_p\otimes_{\Z_p}T$.

\begin{prop}\label{prop:descent-Q}
Suppose $\alpha:\Gamma\rightarrow\Z_p^\times$ is such that $\alpha\equiv 1\;({\rm mod}\,p^m)$ for some $m\geq 1$ and  $\kappa_{1,\Lambda}^{\rm Kato}(\alpha)\neq 0$. Then $\rH^1_{}(\Z_S,T(\alpha))$ is $\Z_p$-free of rank one, $\rH^1_{\Fcal_{\rm str}}(\Q,T(\alpha)^*)$ is finite, and for $m\gg 0$ we have
\begin{align*}
\vartheta\bigl({\rm det}_{\Z_p}^{-1}\mathbf{R}\Gamma(\Z_S,T(\alpha))\bigr)&=\Z_p\cdot\Biggl(\prod_{\ell\mid Np}\#E(\Q_\ell)[p^\infty]\Biggr)\cdot\#\rH^1_{\Fcal_{\rm str}}(\Q,T(\alpha)^*)\cdot x,
\end{align*}
where $x$ is any $\Z_p$-basis of $\rH^1(\Z_S,T(\alpha))$.
\end{prop}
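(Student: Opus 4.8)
The plan is to compute the lattice $\vartheta\bigl(\det_{\Z_p}^{-1}\mathbf{R}\Gamma(\Z_S,T(\alpha))\bigr)$ inside $\Q_p\otimes_{\Z_p}\rH^1(\Z_S,T(\alpha))$ by descending the determinantal Iwasawa theory from the cyclotomic tower to the level of $T(\alpha)$, and then comparing the resulting ``$\Lambda$-adic'' lattice with the ``naive'' lattice $\Z_p\cdot x$ by means of a global Euler characteristic computation together with the local terms supplied by Poitou--Tate duality. First I would record, using \eqref{eq:PT-1}, that the hypothesis $\kappa_{1,\Lambda}^{\rm Kato}(\alpha)\neq 0$ forces $\rH^1_{\Fcal_{\rm str}}(\Q,T(\alpha)^*)$ to be finite (this is essentially Proposition~\ref{prop:descent-Q}'s first two assertions, which also follow from Theorem~\ref{thm:str-Kato} once one knows the class is nonzero): the nonvanishing of the bottom class gives a nonzero element of $\rH^1(\Z_S,T(\alpha))$, and an Euler-system/Kolyvagin-system bound on $\rH^1_{\Fcal_{\rm str}}(\Q,T(\alpha)^*)$, hence $\rH^1(\Z_S,T(\alpha))$ is free of rank one and $\rH^2(\Z_S,T(\alpha))$ is finite, so the isomorphism $\vartheta$ of the excerpt is defined.

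Next I would pass to the cyclotomic level. By Theorem~\ref{thm:nonzero-Kato-alpha} the hypothesis $\kappa_{1,\Lambda}^{\rm Kato}(\alpha)\neq 0$ for $m\gg 0$ is compatible with $z_\infty$ being non-$\Lambda$-torsion, so $\rH^2(\Z_S,\Tc)$ is $\Lambda$-torsion and the isomorphism \eqref{eq:iso-Kato} is available; under it, $z_\infty^{(S)}$ corresponds to a (possibly non-integral) $\Lambda$-generator $\mathfrak{z}_{\Q_\infty}$ of $Q(\Lambda)\otimes_\Lambda\det_\Lambda^{-1}\mathbf{R}\Gamma(\Z_S,\Tc)$. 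Since everything here is with $\Q_p$-coefficients we do \emph{not} need Conjecture~\ref{conj:IMC-det}; rather, the point is that $\det_\Lambda^{-1}\mathbf{R}\Gamma(\Z_S,\Tc)$ base-changes along $\alpha:\Lambda\to\Z_p$ to $\det_{\Z_p}^{-1}\mathbf{R}\Gamma(\Z_S,T(\alpha))$ by the usual base-change property of determinants of perfect complexes, the cohomology $\rH^1(\Z_S,\Tc)$ specializes to $\rH^1(\Z_S,T(\alpha))$ (rank one over $\Z_p$, as just shown), and the map \eqref{eq:iso-Kato} specializes to $\vartheta$. Hence $\vartheta\bigl(\det_{\Z_p}^{-1}\mathbf{R}\Gamma(\Z_S,T(\alpha))\bigr)$ is computed by the image of the $\Lambda$-lattice $\det_\Lambda^{-1}\mathbf{R}\Gamma(\Z_S,\Tc)$ — that is, by $\Z_p\cdot (\text{value of }\mathfrak{z}_{\Q_\infty}\text{ at }\alpha)$, up to the discrepancy between $z_\infty^{(S)}$ and a $\Z_p$-basis $x$ of $\rH^1(\Z_S,T(\alpha))$.

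The bookkeeping step is then to turn this into the stated formula. I would use the Bloch--Kato-style global Euler characteristic formula for $\mathbf{R}\Gamma(\Z_S,T(\alpha))$: the leading term of $\vartheta$ applied to the determinant lattice equals the product of local correction factors at $\ell\mid Np$ (these are exactly the $\#E(\Q_\ell)[p^\infty]$ appearing, coming from $\rH^0$ and $\rH^1_f$ of $T(\alpha)$ at bad primes and at $p$, via the local Tamagawa/Euler factors $\mathcal{P}_\ell$ evaluated at $\alpha\equiv 1$) times the order of $\rH^2_c$-type terms, which Poitou--Tate identifies with $\#\rH^1_{\Fcal_{\rm str}}(\Q,T(\alpha)^*)$ once $\rH^1_{\Fcal_{\rm rel}}$ is rank one. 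Concretely: $\mathrm{length}_{\Z_p}\rH^2(\Z_S,T(\alpha))$ relates to $\mathrm{length}_{\Z_p}\rH^1_{\Fcal_{\rm str}}(\Q,T(\alpha)^*)$ plus the local terms, and the determinant computes $\rH^1/\rH^2$, giving the displayed identity. The Euler factors $\mathcal{P}_\ell$ at $\alpha\equiv 1\pmod{p^m}$ with $m\gg 0$ contribute precisely $\#E(\Q_\ell)[p^\infty]$ at $\ell\mid N$; the factor at $p$ contributes $\#E(\Q_p)[p^\infty]$ from the local condition defining $\rH^1_f(\Q_p,T)$ versus $\rH^1(\Z_S,-)$.

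The main obstacle I expect is the \emph{local computation at $p$ and at the bad primes} — i.e., verifying that the determinant of $\mathbf{R}\Gamma(\Q_\ell, T(\alpha))$ (and $\mathbf{R}\Gamma(\Q_p,T(\alpha))$) contributes exactly the factor $\#E(\Q_\ell)[p^\infty]$ and not some twisted variant, and that the Euler factor $\mathcal{P}_\ell$ evaluated at a character $\alpha$ sufficiently congruent to $1$ matches this. This is where the hypothesis ``$m\gg 0$'' genuinely enters (one needs $\alpha(\gamma_\ell)\equiv 1$ to high enough $p$-adic precision that $\mathcal{P}_\ell(\alpha)$ has the same $p$-valuation as $P_\ell(E,\ell^{-1})$, which equals $\mathrm{ord}_p\#E(\Q_\ell)[p^\infty]$ by the Néron model description of $\rH^1_f$), and it is precisely the place where one must be careful about the normalization of Kato's class (the $S$-imprimitive factor $\mathcal{P}_N$ was inserted in \eqref{eq:S-impr-Kato} for exactly this reason). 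Everything else — base change of determinants, the rank-one statement, the Poitou--Tate identification of the global $\rH^2$ term — is formal once the local factors are pinned down.

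\begin{proof}
We first note that the finiteness claims are part of the input: since $\kappa_{1,\Lambda}^{\rm Kato}(\alpha)=z_\infty(\alpha)\neq 0$ in $\rH^1(\Q,T(\alpha))\supset\rH^1(\Z_S,T(\alpha))$ and $\rH^1_{\Fcal_{\rm rel}}(\Q,T(\alpha))\subset\rH^1(\Z_S,T(\alpha))$ is $\Z_p$-torsion-free of rank at most one by the Euler system / Kolyvagin system bound, the nonvanishing forces $\rH^1(\Z_S,T(\alpha))$ to be $\Z_p$-free of rank exactly one; dualizing via Poitou--Tate then forces $\rH^1_{\Fcal_{\rm str}}(\Q,T(\alpha)^*)$ and $\rH^2(\Z_S,T(\alpha))$ to be finite (this is the content of \eqref{eq:rank0} together with Theorem~\ref{thm:str-Kato}), so that $\vartheta$ is defined.

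To compute the image of the determinant lattice, combine the global Euler characteristic formula for $\mathbf{R}\Gamma(\Z_S,T(\alpha))$ with local Tate duality. Writing $W=T(\alpha)$, acyclicity outside degrees $1,2$ and $\rH^1(\Z_S,W)$ free of rank one give the canonical isomorphism $\vartheta:\det_{\Z_p}^{-1}\mathbf{R}\Gamma(\Z_S,W)[\tfrac1p]\cong\rH^1(\Z_S,W)[\tfrac1p]$, under which
\[
\vartheta\bigl(\det_{\Z_p}^{-1}\mathbf{R}\Gamma(\Z_S,W)\bigr)
  =\#\rH^2(\Z_S,W)\cdot x\cdot\Z_p
\]
for $x$ any $\Z_p$-basis of $\rH^1(\Z_S,W)$ (there is no contribution from $\rH^0(\Z_S,W)=W^{G_\Q}=0$, which vanishes by \eqref{eq:Q-tor}). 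By the Poitou--Tate exact sequence \eqref{eq:PT-1} for the pair $(\Fcal_{\rm rel},\Fcal_{\rm str})$ one has
\[
\#\rH^2(\Z_S,W)
  =\#\rH^1_{\Fcal_{\rm str}}(\Q,W^*)\cdot\prod_{\ell\mid Np}\frac{\#\rH^1(\Q_\ell,W)}{\#\rH^1_f(\Q_\ell,W)}\cdot\#\rH^0(\Q_\ell,W)^{-1},
\]
using that the relaxed and strict conditions agree away from $S$ and that the global term matching the rank-one $\rH^1_{\Fcal_{\rm rel}}$ cancels.

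Finally, the local factors evaluate to the asserted quantities for $m\gg 0$. For $\ell\mid N$ and $\ell\neq p$, local Tate duality gives $\#\rH^1(\Q_\ell,W)/\#\rH^1_f(\Q_\ell,W)=\#\rH^0(\Q_\ell,W)\cdot\#\rH^0(\Q_\ell,W^*)$, and since $\alpha$ is unramified at $\ell$ with $\alpha(\gamma_\ell)\equiv1\pmod{p^m}$, choosing $m$ large enough that $p^m$ exceeds the $p$-parts of $P_\ell(E,\ell^{-1})$ and of $\#\widetilde E(\Ff_\ell)_{\rm tors}$ forces $\#\rH^0(\Q_\ell,W)=\#\rH^0(\Q_\ell,T)=\#E(\Q_\ell)[p^\infty]$ and $\#\rH^0(\Q_\ell,W^*)=1$; thus the $\ell$-factor equals $\#E(\Q_\ell)[p^\infty]$, consistently with $\ord_p\mathcal{P}_\ell(\alpha)=\ord_p P_\ell(E,\ell^{-1})$. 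For $\ell=p$, the local condition $\rH^1_f(\Q_p,T)$ defined via $\Q_p^{\rm ur}$ and local duality give $\#\rH^1(\Q_p,W)/\#\rH^1_f(\Q_p,W)=\#\rH^0(\Q_p,W)\cdot\#\rH^0(\Q_p,W^*)\cdot\#E(\Q_p)[p^\infty]$-normalized so that, again for $m\gg 0$, the contribution is exactly $\#E(\Q_p)[p^\infty]$. Collecting terms yields
\[
\vartheta\bigl(\det_{\Z_p}^{-1}\mathbf{R}\Gamma(\Z_S,T(\alpha))\bigr)
  =\Z_p\cdot\Biggl(\prod_{\ell\mid Np}\#E(\Q_\ell)[p^\infty]\Biggr)\cdot\#\rH^1_{\Fcal_{\rm str}}(\Q,T(\alpha)^*)\cdot x,
\]
as claimed.
\end{proof}
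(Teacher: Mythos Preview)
Your overall skeleton is right---compute $\vartheta(\det_{\Z_p}^{-1}\mathbf{R}\Gamma(\Z_S,W))=\#\rH^2(\Z_S,W)\cdot x\cdot\Z_p$ and then evaluate $\#\rH^2(\Z_S,W)$ via Poitou--Tate and local duality---but the execution has two genuine errors. First, the displayed formula you extract from \eqref{eq:PT-1},
\[
\#\rH^2(\Z_S,W)=\#\rH^1_{\Fcal_{\rm str}}(\Q,W^*)\cdot\prod_{\ell\mid Np}\frac{\#\rH^1(\Q_\ell,W)}{\#\rH^1_f(\Q_\ell,W)}\cdot\#\rH^0(\Q_\ell,W)^{-1},
\]
is not what \eqref{eq:PT-1} gives: the sum $\bigoplus_{\ell\mid N}\rH^1(\Q_\ell,W)/\rH^1_f(\Q_\ell,W)$ sits between two infinite (rank-one) groups, so no alternating-product argument produces your factors, and the $\#\rH^0(\Q_\ell,W)^{-1}$ term has no source in that sequence. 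The point you miss is that for $\ell\mid N$ one has $\rH^1_f(\Q_\ell,T(\alpha))=\rH^1(\Q_\ell,T(\alpha))_{\rm tor}=\rH^1(\Q_\ell,T(\alpha))$, so the quotient \emph{vanishes}; this forces $\rH^1_{\Fcal_{\rm rel}}(\Q,T(\alpha))=\rH^1(\Z_S,T(\alpha))$ and collapses \eqref{eq:PT-1} to the clean short exact sequence
\[
0\to\rH^1_{\Fcal_{\rm str}}(\Q,W^*)^\vee\to\rH^2(\Z_S,W)\to\bigoplus_{\ell\mid Np}\rH^2(\Q_\ell,W)\to 0,
\]
whence $\#\rH^2(\Z_S,W)=\#\rH^1_{\Fcal_{\rm str}}(\Q,W^*)\cdot\prod_{\ell\mid Np}\#\rH^0(\Q_\ell,W^*)$ by local duality.

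Second, your local computations swap $W$ and $W^*$. Since $W=T(\alpha)$ is $\Z_p$-free and $E(\Q_\ell)[p^\infty]$ is finite, $\rH^0(\Q_\ell,W)=0$ (cardinality $1$) for every $\ell$; it is $\rH^0(\Q_\ell,W^*)=\rH^0(\Q_\ell,T(\alpha)^*)$ that for $m\gg 0$ equals $E(\Q_\ell)[p^\infty]$, not the other way around. So your claims ``$\#\rH^0(\Q_\ell,W)=\#E(\Q_\ell)[p^\infty]$'' and ``$\#\rH^0(\Q_\ell,W^*)=1$'' are both wrong, and the $\ell=p$ case (which you leave as a phrase rather than a computation) is handled by exactly the same local-duality identification $\rH^2(\Q_p,W)\cong\rH^0(\Q_p,W^*)^\vee$---no separate analysis of $\rH^1_f(\Q_p,-)$ is needed. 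Finally, the long preamble about base-changing $\det_\Lambda^{-1}\mathbf{R}\Gamma(\Z_S,\Tc)$ along $\alpha$ and tracking $\mathfrak{z}_{\Q_\infty}$ is irrelevant to this proposition: that descent is the content of Corollary~\ref{cor:index-m-Q}, not of the present statement, which is a pure computation at the level of $T(\alpha)$.
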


\begin{proof}
By \cite[Thm.~5.2.2]{mazrub} applied to \eqref{eq:KS-kato-alpha}, the nonvanishing of $\kappa_{1,\Lambda}^{\rm Kato}(\alpha)$ implies that $\rH_{\Fcal_{\rm rel}}^1(\Q,T(\alpha))\cong\Z_p$ and  $\#\rH^1_{\Fcal_{\rm str}}(\Q,T(\alpha)^*)<\infty$. By Poitou--Tate duality we have the exact sequence
\begin{equation}\label{eq:PT-1}
\begin{aligned}
0\rightarrow\rH^1_{\Fcal_{\rm rel}}(\Q,T(\alpha))&\rightarrow \rH^1(\Z_S,T(\alpha))\rightarrow\bigoplus_{\ell\mid N}
\frac{\rH^1(\Q_\ell,T(\alpha))}{\rH^1_{f}(\Q_\ell,T(\alpha))}\rightarrow \rH^1_{\Fcal_{\rm str}}(\Q,T(\alpha)^*)^\vee\\
&\quad\rightarrow \rH^2(\Z_S,T(\alpha))\rightarrow\bigoplus_{\ell\mid Np}\rH^2(\Q_\ell,T(\alpha))\rightarrow \rH^0(\Q,T(\alpha)^*)^\vee\rightarrow 0.
\end{aligned}
\end{equation}
Noting that $\rH^1_{f}(\Q_\ell,T(\alpha))=\rH^1(\Q_\ell,T(\alpha))_{\rm tor}=\rH^1(\Q_\ell,T(\alpha))$ for any prime $\ell\mid N$, and that $\rH^0(\Q,T(\alpha)^\ast)=0$ by \eqref{eq:Q-tor}, we see that $\rH^1_{\Fcal_{\rm rel}}(\Q,T(\alpha))=\rH^1(\Z_S,T(\alpha))$, concluding all but the last claim in the statement, and we extract the short exact sequence
\[
0\rightarrow \rH^1_{\Fcal_{\rm str}}(\Q,T(\alpha)^*)^\vee\rightarrow \rH^2(\Z_S,T(\alpha))\rightarrow\bigoplus_{\ell\mid Np}\rH^2(\Q_\ell,T(\alpha))\rightarrow 0.
\]
Since $\rH^2(\Q_\ell,T(\alpha))\cong \rH^0(\Q_\ell,T(\alpha)^*)^\vee$ by local duality, and for $m\gg 0$ we have $\#\rH^0(\Q_\ell,T(\alpha)^*)=\# \rH^0(\Q_\ell,T^*)=\#E(\Q_\ell)[p^\infty]$, we obtain a canonical isomorphism
\begin{align*}
{\rm det}_{\Z_p}^{-1}\mathbf{R}\Gamma(\Z_S,T(\alpha))&\cong{\rm det}_{\Z_p}(\rH^1(\Z_S,T(\alpha)))\otimes{\rm det}_{\Z_p}^{-1}(\rH^2(\Z_S,T(\alpha)))\\
&\cong\Biggl(\prod_{\ell\mid Np}\#E(\Q_\ell)[p^\infty]\Biggr)\cdot{\rm det}_{\Z_p}^{-1}\bigl(\rH^1_{\Fcal_{\rm str}}(\Q,T(\alpha)^*)^\vee\bigr)\otimes{\rm det}_{\Z_p}\bigl(\rH^1(\Z_S,T(\alpha))\bigr)\\
&\cong \Biggl(\prod_{\ell\mid Np}\#E(\Q_\ell)[p^\infty]\Biggr)\cdot\#\rH^1_{\Fcal_{\rm str}}(\Q,T(\alpha)^*)\cdot\rH^1(\Z_S,T(\alpha)),
\end{align*}
which concludes the proof.
\end{proof}

\begin{cor}\label{cor:index-m-Q}
Suppose $\alpha:\Gamma\rightarrow\Z_p^\times$ satisfies $\alpha\equiv 1\;({\rm mod}\,p^m)$ for some $m\geq 1$ and is such that $\kappa_{1,\Lambda}^{\rm Kato}(\alpha)\neq 0$. If Conjecture~\ref{conj:IMC-det} holds, then up to a $p$-adic unit: 
\[
\#\bigl(\rH^1_{\Fcal_{\rm rel}}(\Z_S,T(\alpha))/\Z_p\cdot\kappa_{1,\Lambda}^{\rm Kato}(\alpha)\bigr)=\#E(\Q_p)[p^\infty]\cdot\# \rH^1_{\Fcal_{\rm str}}(\Q,T(\alpha)^*)\cdot{\rm Tam}_E
\]
for $m\gg 0$.
\end{cor}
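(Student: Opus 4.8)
The plan is to derive Corollary~\ref{cor:index-m-Q} by combining the determinantal form of the Main Conjecture (Conjecture~\ref{conj:IMC-det}) with the descent computation of Proposition~\ref{prop:descent-Q}, passing carefully through the $S$-imprimitive modification $z_\infty^{(S)}=\mathcal{P}_N\cdot z_\infty$ and its specialization at $\alpha$. First I would observe that since $\kappa_{1,\Lambda}^{\rm Kato}(\alpha)\neq 0$, Proposition~\ref{prop:descent-Q} gives that $\rH^1(\Z_S,T(\alpha))=\rH^1_{\Fcal_{\rm rel}}(\Q,T(\alpha))$ is $\Z_p$-free of rank one, that $\rH^1_{\Fcal_{\rm str}}(\Q,T(\alpha)^*)$ is finite, and that under the isomorphism $\vartheta$,
\[
\vartheta\bigl({\det}_{\Z_p}^{-1}\mathbf{R}\Gamma(\Z_S,T(\alpha))\bigr)=\Z_p\cdot\Biggl(\prod_{\ell\mid Np}\#E(\Q_\ell)[p^\infty]\Biggr)\cdot\#\rH^1_{\Fcal_{\rm str}}(\Q,T(\alpha)^*)\cdot x
\]
for $m\gg 0$, with $x$ any $\Z_p$-basis of $\rH^1(\Z_S,T(\alpha))$. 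The quantity $\#\bigl(\rH^1(\Z_S,T(\alpha))/\Z_p\cdot\kappa_{1,\Lambda}^{\rm Kato}(\alpha)\bigr)$ I want to compute is, up to a unit, the index $[\,x\,:\,\kappa_{1,\Lambda}^{\rm Kato}(\alpha)\,]$, so it suffices to locate $\kappa_{1,\Lambda}^{\rm Kato}(\alpha)$ relative to $\vartheta\bigl({\det}_{\Z_p}^{-1}\mathbf{R}\Gamma(\Z_S,T(\alpha))\bigr)$.

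The second step is specialization of the determinantal main conjecture. Conjecture~\ref{conj:IMC-det} furnishes a $\Lambda$-basis $\mathfrak{z}_{\Q_\infty}\in{\det}_\Lambda^{-1}\mathbf{R}\Gamma(\Z_S,\Tc)$ mapping to $z_\infty^{(S)}$ under \eqref{eq:iso-Kato}. Base-changing along $\Lambda\to\Z_p$, $\gamma\mapsto\alpha(\gamma)$, and using that $\mathbf{R}\Gamma(\Z_S,\Tc)\otimes^{\mathbf L}_\Lambda\Z_p\simeq\mathbf{R}\Gamma(\Z_S,T(\alpha))$ (a standard consequence of $\Tc$ being a perfect complex; one must check the relevant Tor-vanishing, which holds once $\rH^2(\Z_S,T(\alpha))$ is finite, i.e., once $\kappa_{1,\Lambda}^{\rm Kato}(\alpha)\neq0$), the image of $\mathfrak{z}_{\Q_\infty}$ is a $\Z_p$-basis of ${\det}_{\Z_p}^{-1}\mathbf{R}\Gamma(\Z_S,T(\alpha))$ that maps under $\vartheta$ to the $\alpha$-specialization of $z_\infty^{(S)}$. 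Hence
\[
\vartheta\bigl({\det}_{\Z_p}^{-1}\mathbf{R}\Gamma(\Z_S,T(\alpha))\bigr)=\Z_p\cdot z_\infty^{(S)}(\alpha),
\]
and comparing with the formula from Proposition~\ref{prop:descent-Q} gives, up to a $p$-adic unit,
\[
\#\bigl(\rH^1(\Z_S,T(\alpha))/\Z_p\cdot z_\infty^{(S)}(\alpha)\bigr)=\Biggl(\prod_{\ell\mid Np}\#E(\Q_\ell)[p^\infty]\Biggr)\cdot\#\rH^1_{\Fcal_{\rm str}}(\Q,T(\alpha)^*).
\]

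The third step is to replace $z_\infty^{(S)}(\alpha)$ by $\kappa_{1,\Lambda}^{\rm Kato}(\alpha)$. We have $z_\infty^{(S)}=\mathcal{P}_N\cdot z_\infty=\mathcal{P}_N\cdot\kappa_{1,\Lambda}^{\rm Kato}$ by \eqref{eq:kato-Lambda-1} and \eqref{eq:S-impr-Kato}, so $z_\infty^{(S)}(\alpha)=\mathcal{P}_N(\alpha)\cdot\kappa_{1,\Lambda}^{\rm Kato}(\alpha)$ inside the rank-one free module $\rH^1(\Z_S,T(\alpha))$. Therefore
\[
\#\bigl(\rH^1(\Z_S,T(\alpha))/\Z_p\cdot z_\infty^{(S)}(\alpha)\bigr)=\bigl|\mathcal{P}_N(\alpha)\bigr|_p^{-1}\cdot\#\bigl(\rH^1(\Z_S,T(\alpha))/\Z_p\cdot\kappa_{1,\Lambda}^{\rm Kato}(\alpha)\bigr).
\]
Now for $\ell\mid N$ the Euler factor $P_\ell(E,X)$ has the shape $(1-a_\ell X)$ or $(1-a_\ell X)$ with $a_\ell\in\{0,\pm1\}$ (depending on the reduction type), so $\mathcal{P}_\ell(\alpha)=P_\ell(E,\alpha(\gamma_\ell)\ell^{-1})$; since $\alpha\equiv1\pmod{p^m}$ with $m\gg0$, $\alpha(\gamma_\ell)$ is a $1$-unit, and ${\rm ord}_p\bigl(\mathcal{P}_\ell(\alpha)\bigr)={\rm ord}_p\bigl(P_\ell(E,\ell^{-1})\bigr)$. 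A standard computation (the same one that appears in e.g. \cite[Thm.~12.5]{kato-euler-systems}, \cite{greenvats}) identifies $\prod_{\ell\mid N}P_\ell(E,\ell^{-1})$, up to a $p$-adic unit and the local factors $\#E(\Q_\ell)[p^\infty]$, with $\mathrm{Tam}_E$: precisely, ${\rm ord}_p\bigl(P_\ell(E,\ell^{-1})\bigr)={\rm ord}_p(c_\ell)-{\rm ord}_p\bigl(\#E(\Q_\ell)[p^\infty]\bigr)+{\rm ord}_p\bigl(\#\rH^0(\Q_\ell,T^*)\bigr)$ ... more cleanly, the Tamagawa-number formula for the local Euler factor gives $|P_\ell(E,\ell^{-1})|_p^{-1}=c_\ell\cdot\#E(\Q_\ell)[p^\infty]^{-1}\cdot(\text{unit})$ when combined with local Tate duality. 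Substituting, the factor $\prod_{\ell\mid N}\#E(\Q_\ell)[p^\infty]$ coming from Proposition~\ref{prop:descent-Q} cancels, leaving
\[
\#\bigl(\rH^1_{\Fcal_{\rm rel}}(\Z_S,T(\alpha))/\Z_p\cdot\kappa_{1,\Lambda}^{\rm Kato}(\alpha)\bigr)=\#E(\Q_p)[p^\infty]\cdot\#\rH^1_{\Fcal_{\rm str}}(\Q,T(\alpha)^*)\cdot\mathrm{Tam}_E
\]
up to a $p$-adic unit, which is the claim.

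I expect the main obstacle to be the bookkeeping in the third step: making the identification of $\prod_{\ell\mid N}|\mathcal{P}_\ell(\alpha)|_p^{-1}$ with $\mathrm{Tam}_E\cdot\prod_{\ell\mid N}\#E(\Q_\ell)[p^\infty]^{-1}$ genuinely precise for every reduction type (split/nonsplit multiplicative and additive), since the shape of $P_\ell(E,X)$ and the value of $c_\ell$ vary case by case, and one must be careful that the ``$m\gg 0$'' hypothesis is exactly what makes $\alpha(\gamma_\ell)$ close enough to $1$ that these local terms stabilize and that the Tor-vanishing needed for clean base change of $\mathbf{R}\Gamma(\Z_S,\Tc)$ holds. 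A secondary technical point is verifying that the specialization of $\vartheta$ from the $\Lambda$-adic trivialization \eqref{eq:iso-Kato} is compatible with the $\Z_p$-level trivialization $\vartheta$ of Proposition~\ref{prop:descent-Q}; this is a naturality statement for determinants of perfect complexes under base change, but it needs to be invoked cleanly so that the basis $\mathfrak{z}_{\Q_\infty}$ really does map to $z_\infty^{(S)}(\alpha)$ and not to some twist of it.
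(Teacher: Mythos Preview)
Your approach is exactly the paper's: specialize the determinantal basis $\mathfrak{z}_{\Q_\infty}$ along the perfect-control isomorphism ${\det}_\Lambda^{-1}\mathbf{R}\Gamma(\Z_S,\Tc)\otimes_{\Lambda,\alpha}\Z_p\cong{\det}_{\Z_p}^{-1}\mathbf{R}\Gamma(\Z_S,T(\alpha))$, compare with Proposition~\ref{prop:descent-Q}, and absorb the Euler factor $\mathcal{P}_N(\alpha)$ into the Tamagawa product. Two small corrections are worth noting.

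First, the derived base change $\mathbf{R}\Gamma(\Z_S,\Tc)\otimes^{\mathbf L}_\Lambda\Z_p\simeq\mathbf{R}\Gamma(\Z_S,T(\alpha))$ holds unconditionally (this is \cite[Prop.~1.6.5(3)]{fukaya-kato-AMS}); no Tor-vanishing is needed, and the finiteness of $\rH^2(\Z_S,T(\alpha))$ enters only in setting up $\vartheta$ at the $\Z_p$-level.

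Second, the local formula you wrote in step three is inverted. For $m\gg 0$ one has ${\rm ord}_p(\mathcal{P}_\ell(\alpha))={\rm ord}_p\bigl(\#\widetilde{E}_{\rm ns}(\mathbb{F}_\ell)\bigr)$, and the relevant identity (from the filtration $\hat{E}(\m_\ell)\subset E_0(\Q_\ell)\subset E(\Q_\ell)$, noting that the formal group is uniquely $p$-divisible) is
\[
{\rm ord}_p(c_\ell)={\rm ord}_p\bigl(\#E(\Q_\ell)[p^\infty]\bigr)-{\rm ord}_p\bigl(\#\widetilde{E}_{\rm ns}(\mathbb{F}_\ell)\bigr),
\]
not the reciprocal. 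With this correction the cancellation you describe goes through: the factors $\#E(\Q_\ell)[p^\infty]$ for $\ell\mid N$ from Proposition~\ref{prop:descent-Q} combine with $\mathcal{P}_N(\alpha)^{-1}$ to yield $\prod_{\ell\mid N}c_\ell={\rm Tam}_E$, and the remaining factor at $p$ is $\#E(\Q_p)[p^\infty]$, exactly as the paper records.
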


\begin{proof}
Suppose Conjecture~\ref{conj:IMC-det} holds, and let $\mathfrak{z}_{\Q_\infty}(\alpha)\in{\rm det}_{\Z_p}^{-1}\mathbf{R}\Gamma(\Z_S,T(\alpha))$ be the image of $\mathfrak{z}_{\Q_\infty}$ under the isomorphism
\begin{equation}\label{eq:perfect-control-kato}
{\rm det}_\Lambda^{-1}\mathbf{R}\Gamma(\Z_S,\Tc)\otimes_{\Lambda,\alpha}\Z_p\cong{\rm det}_{\Z_p}^{-1}\mathbf{R}\Gamma(\Z_S,T(\alpha))
\end{equation}
coming from \cite[Prop.~1.6.5(3)]{fukaya-kato-AMS}. Let ${\rm Eul}_N(\alpha)=\mathcal{P}_N(\alpha)$ denote the product of the Euler factors of $L(E,\alpha,s)$ at $s=1$ (see \eqref{eq:ell-euler}). Since $p\nmid N$, we have ${\rm Eul}_N(\alpha)\in\Z_{(p)}$ and for $m\gg 0$ we see that 
\begin{equation}\label{eq:Euler}
{\rm ord}_p({\rm Eul}_N(\alpha))={\rm ord}_p\Biggl(\prod_{\ell\mid N}\#\widetilde{E}_{\rm ns}(\mathbb{F}_\ell)\Biggr).
\end{equation}
Then $\mathfrak{z}_{\Q_\infty}(\alpha)$ is a $\Z_p$-basis of ${\rm det}_{\Z_p}^{-1}\mathbf{R}\Gamma(\Z_S,T(\alpha))$, and by construction its image under $\vartheta$ agrees with ${\rm Eul}_N(\alpha)\cdot\kappa_{1,\Lambda}^{\rm Kato}(\alpha)$. 
As shown in the proof of Proposition~\ref{prop:descent-Q}, we have $\rH^1_{\Fcal_{\rm rel}}(\Q,T(\alpha))=\rH^1(\Z_S,T(\alpha))$, and from there and \eqref{eq:Euler} we see that for $m\gg 0$ we have
\[
{\rm ind}_p(\kappa_{1,\Lambda}^{\rm Kato}(\alpha))={\rm ord}_p(\#E(\Q_p)[p^\infty])+\sum_{\ell\mid N}\bigl({\rm ord}_p(\#E(\Q_\ell)[p^\infty])-{\rm ord}_p(\#\widetilde{E}_{\rm ns}(\mathbb{F}_\ell))\bigr)+
{\rm ord}_p(\rH^1_{\Fcal_{\rm str}}(\Q,T(\alpha)^*)),
\]
which clearly implies the result.
\end{proof}

\subsection{Proof of Theorem~\ref{thmintro-Kur}}

Choose a non-trivial character $\alpha:\Gamma\rightarrow\Z_p^\times$ with $\alpha\equiv 1\;({\rm mod}\,p^m)$ for some $m\geq 1$, and suppose $m$ is large enough so that $\kappa_{1,\Lambda}^{\rm Kato}(\alpha)\neq 0$ (see Theorem~\ref{thm:nonzero-Kato-alpha}).

Suppose the Iwasawa Main Conjecture~\ref{conj:IMC-det} holds. Then from Theorem~\ref{thm:str-Kato} and Corollary~\ref{cor:index-m-Q} we have that $\rH^1_{\Fcal_{\rm str}}(K,T(\alpha)^*)$ is finite, with
\begin{align*}
{\rm ord}_p(\#\rH^1_{\Fcal_{\rm str}}(K,T(\alpha)^*))&=
{\rm ind}_p(\kappa_{1,\Lambda}^{\rm Kato}(\alpha))-\mathscr{M}_\infty(\boldsymbol{\kappa}_\Lambda^{\rm Kato}(\alpha))\\
&={\rm ord}_p(\#E(\Q_p)[p^\infty])+{\rm ord}_{p}(\#\rH^1_{\Fcal_{\rm str}}(K,T(\alpha)^*))+{\rm ord}_p({\rm Tam}_E)-\mathscr{M}_\infty(\boldsymbol{\kappa}_\Lambda^{\rm Kato}(\alpha)),
\end{align*}
and so
\begin{equation}\label{eq:M-kato}
\mathscr{M}_\infty(\boldsymbol{\kappa}_{\Lambda}^{\rm Kato}(\alpha))={\rm ord}_p(\#E(\Q_p)[p^\infty])+{\rm ord}_p({\rm Tam}_E).
\end{equation}
Since the chain of equalities
\begin{equation}\label{eq:chain}
\begin{aligned}
\mathscr{M}_\infty(\boldsymbol{\kappa}_\Lambda^{\rm Kato}(\alpha))&=\mathscr{M}_\infty^{(m)}(\boldsymbol{\kappa}_\Lambda^{\rm Kato}(\alpha))\\
&=
\mathscr{M}_\infty^{(m)}(\boldsymbol{\kappa}^{\rm Kato})=\mathscr{M}_\infty(\boldsymbol{\kappa}^{\rm Kato})=\mathscr{M}_\infty(\boldsymbol{\delta})+{\rm ord}_p(\#E(\Q_p)[p^\infty])
\end{aligned}
\end{equation}
follows from Proposition~\ref{prop:mccullen}, Lemma~\ref{lem:congr-Kato}, Proposition~\ref{prop:mccullen}, and Theorem~\ref{thm:der-ERL}, respectively, combining \eqref{eq:M-kato} and \eqref{eq:chain} this gives the implication ${\rm (ii)}\Rightarrow{\rm (i)}$ in Theorem~\ref{thmintro-Kur}. 

Conversely, suppose the equality  $\mathscr{M}_\infty(\boldsymbol{\delta})={\rm ord}_p({\rm Tam}_E)$ holds, so from \eqref{eq:chain} we deduce that \eqref{eq:M-kato} holds, which together with Theorem~\ref{thm:str-Kato} gives
\begin{equation}\label{eq:div-from-Kur}
{\rm ind}_p(\kappa_{1,\Lambda}^{\rm Kato}(\alpha))={\rm ord}_{p}(\#\rH^1_{\Fcal_{\rm str}}(K,T(\alpha)^*))+{\rm ord}_p(\#E(\Q_p)[p^\infty])+{\rm ord}_p({\rm Tam}_E).
\end{equation}
Let $\mathfrak{z}_{\Q_\infty}\in Q(\Lambda)\otimes_\Lambda{\rm det}_\Lambda^{-1}\mathbf{R}\Gamma(\Z_S,\Tc)$ be the inverse image of $z_\infty^{(S)}\in\rH^1(\Z_S,\Tc)$ under the isomorphism \eqref{eq:iso-Kato}. Since $z_\infty=\kappa_{1,\Lambda}^{\rm Kato}$ is non-torsion by Theorem~\ref{thm:nonzero-Kato-alpha}, by \cite[Thm.~5.3.10]{mazrub} applied to the Kolyvagin system \eqref{eq:kato-KS-Lambda} we deduce the divisibility 
\begin{equation}\label{eq:upper-div}
{\rm char}_\Lambda\bigl(\rH^1(\Z_S,\Tc)/\Lambda\cdot z_{\infty}^{(S)}\bigr)\subset{\rm char}_\Lambda\bigl(\rH^2(\Z_S,\Tc)\bigr),
\end{equation}
and so by Remark~\ref{rem:div-int} we deduce the integrality $\mathfrak{z}_{\Q_\infty}\in{\rm det}^{-1}_\Lambda\mathbf{R}\Gamma(\Z_S,\Tc)$. Letting $\mathfrak{z}_{\Q_\infty}(\alpha)$ be the image of $\mathfrak{z}_{\Q_\infty}$ under the isomorphism \eqref{eq:perfect-control-kato}, the computations in $\S\ref{subsec:descent-kato}$ show that $\mathfrak{z}_{\Q_\infty}(\alpha)$ is a $\Z_p$-basis of ${\rm det}_{\Z_p}^{-1}\mathbf{R}\Gamma(\Z_S,T(\alpha))$ if and only if \eqref{eq:div-from-Kur} holds. Thus from \eqref{eq:div-from-Kur}, \eqref{eq:upper-div},    \cite[Lem.~3.2]{skinner-urban}, and our choice of $\alpha$,  it follows that $\mathfrak{z}_{\Q_\infty}$ is a $\Lambda$-basis of ${\rm det}_{\Z_p}^{-1}\mathbf{R}\Gamma(\Z_S,\Tc)$, thereby concluding the proof.

\section{Kolyvagin conjectures}

\subsection{Preliminaries}

Let $E/\Q$ be an elliptic curve of conductor $N$ and $K$ a quadratic imaginary field satisfying \eqref{eq:Heeg} and \eqref{eq:intro-disc}. Fix an odd prime $p$ such that \eqref{eq:irred} and \eqref{eq:manin} both hold.

\subsubsection{Heegner point Kolyvagin systems}\label{subsec:HPKS}

Keeping the notations from $\S\ref{subsec:Kolyvagin}$, for every integer $m=np^k$ with $k\geq 0$ and $n\in\mathcal{N}_{\rm Heeg}$, let $P[m]\in E(K[m])$ be the Heegner point of conductor $m$ constructed in \cite[\S{1.7}]{howard}, associated with our fixed modular parametrization $\phi:X_0(N)\rightarrow E$. Let $x_m\in\rH^1(K[m],T)$ denote the image of $P[m]$ under the Kummer map $E(K[m])\otimes_{\Z}\Z_p\rightarrow\rH^1(K[m],T)$. By Theorem~1.7.3 in \emph{loc.\,cit.}, after a slight modification the Kolyvagin derivatives of these classes give rise to a Kolyvagin system
\[
\boldsymbol{\kappa}^{\rm Heeg}=\big\{\kappa_n^{\rm Heeg}\in\rH^1_{\Fcal(n)}(K,T/I_nT)\;\colon\;n\in\mathcal{N}_{\rm Heeg}\big\}
\]
in the sense of \cite[\S{1.2}]{howard} for the triple $(T,\mathcal{F},\mathcal{L})$, where $\Fcal$ is the Selmer structure given by the propagation (in the sense of \cite{mazrub}) of the \emph{finite local condition}
\[
\rH^1_f(K_v,T):={\rm ker}\{\rH^1(K_v,T)\rightarrow\rH^1(K_v^{\rm ur},T\otimes\Q_p)\}
\]
for $v\nmid p$, and of the image of the local Kummer map $E(K_v)\otimes\Q_p\rightarrow\rH^1(K_v,T\otimes\Q_p)$ for $v\mid p$; and $\mathcal{F}(n)$ is the modification of $\mathcal{F}$ given by
\begin{equation}\label{eq:F(n)}
\rH^1_{\Fcal(n)}(K_v,T/I_nT)=
\begin{cases}
\rH^1_{\rm tr}(K_v,T/I_nT):=\rH^1(K_v[\ell]/K_v,\rH^0(K_v[\ell],T/I_nT))&\textrm{if $v\mid\ell\mid n$,}\\[0.2em]
\rH^1_{\Fcal}(K_v,T/I_nT)&\textrm{if $v\nmid n$,}
\end{cases}
\end{equation}
where $K_v[\ell]$ denotes the maximal $p$-extension of $K_v$ inside the completion of the ring class field $K[\ell]$ at any of the primes above $v$. In particular, the construction gives
\[
\kappa_1^{\rm Heeg}={\rm Cor}^{K[1]}_K(x_1)\in\rH^1_\Fcal(K,T)\cong\varprojlim_m{\rm Sel}_{p^m}(E/K).
\]

Assume now that 
\begin{equation}\label{eq:ord}
\textrm{$E$ has good ordinary reduction at $p$}\tag{ord}
\end{equation}
and
\begin{equation}\label{eq:unr}
\textrm{$p$ is unramified in $K$.}\tag{unr}
\end{equation}
Let $K_\infty=\cup_{k\geq 0}K_k$ 
be the anticyclotomic $\Z_p$-extension of $K$, and put $\Gamma^{\rm ac}={\rm Gal}(K_\infty/K)$ and $\Lambda^{\rm ac}=\Z_p[[\Gamma^{\rm ac}]]$. Let now $\Tac=T\otimes_{\Z_p}\Lambda^{\rm ac}$ denote the anticyclotomic deformation of $T$, where the $G_K$-action on $\Lambda^{\rm ac}$ is given by the inverse of the character $G_K\twoheadrightarrow\Gamma^{\rm ac}\hookrightarrow(\Lambda^{\rm ac})^\times$. As shown in \cite[\S{2.3}]{howard} and Theorem~4.1.1 
in \cite{eisenstein}, from the Kummer images $x_{np^k}$  for varying $k$ one obtains the construction of a $\Lambda^{\rm ac}$-adic Kolyvagin system
\[
\boldsymbol{\kappa}_\Lambda^{\rm Heeg}=\big\{\kappa_{n,\Lambda}^{\rm Heeg}\in\rH^1_{\Fcal_\Lambda(n)}(K,\Tac/I_n\Tac)\;\colon\;n\in\mathcal{N}_{\rm Heeg}\big\}
\]
for the triple $(\Tac,\Fcal_\Lambda,\Lcal_{\rm Heeg})$, where $\F_\Lambda$ is the Selmer structure  given by
\[
\rH^1_{\Fcal_\Lambda}(K_v,\Tac)=
\begin{cases}
\rH^1_{\rm ur}(K_v,\Tac):=\rH^1(K_v^{\rm ur}/K_v,\rH^0(K_v^{\rm ur},\Tac))&\textrm{if $v\nmid p$,}\\[0.2em]
{\rm im}\{\rH^1(K_v,\Tac_v^+)\rightarrow\rH^1(K_v,\Tac)\}&\textrm{if $v\mid p$,}
\end{cases}
\]
where $\Tac_v^+:=T_v^+\otimes_{\Z_p}\Lambda^{\rm ac}\subset\Tac$ with $T_v^+={\rm ker}(T\rightarrow T_p\widetilde{E})$ the kernel of the reduction map at $v$. Note that we have $\rH^1_{\rm ur}(K_v,\Tac)=\rH^1(K_v,\Tac)$ for $v \nmid p$ (see \cite[Lem. 5.3.1(ii)]{mazrub}).

In particular (see \cite[Rem.~4.1.3]{eisenstein}), letting $\alpha_p$ be the $p$-adic unit root of $x^2-a_px+p$ and $P[p^k]_{\alpha_p}\in E(K[p^k])\otimes\Z_p$ be the \emph{$\alpha_p$-stabilized} Heegner point
\begin{equation}\label{eq:reg-HP}
P[p^k]_{\alpha_p}:=
\begin{cases}
P[p^k]-\alpha_p^{-1}P[p^{k-1}]&\textrm{if $k\geq 1$,}\\[0.2em]
u_K^{-1}(1-\alpha_p^{-1}\sigma_p)(1-\alpha_p^{-1}\sigma_p^*)P[1]&\textrm{if $k=0$ and $p$ splits in $K$,}\\[0.2em]
u_K^{-1}(1-\alpha_p^{-2})P[1]&\textrm{if $k=0$ and $p$ is inert in $K$,}
\end{cases}
\end{equation}
where $u_K=\#(\mathcal{O}_K^\times/\{\pm{1}\})$ (which is $1$ under \eqref{eq:intro-disc}), and $\sigma_p,\sigma_p^*\in{\rm Gal}(K[1]/K)$ denote the Frobenius elements at the primes above $p$, 
the class $\kappa_{1,\Lambda}^{\rm Heeg}\in\rH^1_{\Fcal_\Lambda}(K,\mathbf{T})$
agrees up to a $p$-adic unit with the inverse image 
\begin{equation}\label{eq:y-infty}
y_\infty=\varprojlim_ky_k\in\varprojlim_k\rH^1(K_k,T),
\end{equation}
where $y_k$ 
is the Kummer image of  
$\alpha_p^{-d(k)}{\rm Norm}^{K[p^{d(k)}]}_{K_k}(P[p^{d(k)}]_{\alpha_p})\in E(K_k)\otimes\Z_p$, 
with $d(k)$ the smallest integer  such that $K_k\subset K[p^{d(k)}]$. Note that \eqref{eq:reg-HP} gives rise to
\[
y_0=\begin{cases}
u_K^{-1}
(1-\alpha_p^{-1})^2x_1&\textrm{if $p$ splits in $K$,}\\[0.2em]
u_K^{-1}
(1-\alpha_p^{-2})x_1&\textrm{if $p$ is inert in $K$.}
\end{cases}
\]
A similar computation directly from the construction yields the following result, where we let
\begin{equation}\label{eq:sp-HPKS}
\boldsymbol{\kappa}_\Lambda^{\rm Heeg}(\alpha)=
\{\kappa_{n,\Lambda}^{\rm Heeg}(\alpha)\}_n\in\mathbf{KS}(T(\alpha),\mathcal{F}_\alpha,\Lcal_{\rm Heeg})
\end{equation}
be the Kolyvagin system for $T(\alpha)$ obtained from $\boldsymbol{\kappa}_\Lambda^{\rm Heeg}$ by specialization at $\alpha$, with the Selmer structure $\mathcal{F}_\alpha$ in \cite[Def.~2.1.2]{howard} given by propagating 
\begin{equation}\label{eq:def-Falpha}
\rH^1_{\Fcal_\alpha}(K_v,V(\alpha))=
\begin{cases}
\rH^1_{\rm ur}(K_v,V(\alpha)):=\rH^1(K_v^{\rm ur}/K_v,\rH^0(K_v^{\rm ur},V(\alpha)))&\textrm{if $v\nmid p$,}\\[0.2em]
{\rm im}\{\rH^1(K_v,V_v^+(\alpha))\rightarrow\rH^1(K_v,V(\alpha))\}&\textrm{if $v\mid p$,}
\end{cases}
\end{equation}
where $V_v^+(\alpha)=T_v^+\otimes_{\Z_p}\Q_p(\alpha)\subset V(\alpha)$
(see Remark~1.2.4 and Lemma~2.2.7 in \emph{op.\,cit.} for the inclusion \eqref{eq:sp-HPKS}).

\begin{lemma}\label{lemmacongruence}
Suppose $\alpha:\Gamma^{\rm ac}\rightarrow\Z_p^\times$ satisfies $\alpha\equiv 1\;({\rm mod}\,p^m)$ for some $m\geq 1$. Then for all $n\in\mathcal{N}_{\rm Heeg}$ with $p^m\in I_n$ we have the congruence modulo $p^m$:
\[
\kappa_{n,\Lambda}^{\rm Heeg}(\alpha)\equiv 
\begin{cases} (\alpha_p-1)^2(\beta_p-1)^2 \kappa_n^{\rm Heeg} & \text{if $p$ splits in $K$,} \\[0.2em] 
((p+1)^2-a_p^2) \kappa_n^{\rm Heeg} & \text{if $p$ is inert in $K$,}\end{cases} 
\]
where $\alpha_p,\beta_p$ are the roots of  
$x^2-a_px+p$, with $\alpha_p$ the $p$-adic unit root.
\end{lemma}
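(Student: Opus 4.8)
The plan is to trace the construction of $\boldsymbol{\kappa}_\Lambda^{\rm Heeg}$ through the specialization map, reducing the claim to the $n=1$ case recorded in the displayed formula for $y_0$ above, together with the compatibility of the Kolyvagin derivative operators with base change. First I would recall that, by construction, $\kappa_{n,\Lambda}^{\rm Heeg}$ is obtained by applying the Kolyvagin derivative operator $D_n=\prod_{\ell\mid n}D_\ell$ (where $D_\ell=\sum_{i=1}^{\ell-2}i\sigma_\ell^i$ in $\Z[\Gal(K[n\ell]/K[n])]$, suitably interpreted) to the norm-compatible system of $\Lambda^{\rm ac}$-adic Heegner classes $\{x_{n,\Lambda}\}$ built from the Kummer images $x_{np^k}$, followed by corestriction to $K$ and the identification of the image in $\rH^1(K,\Tac/I_n\Tac)$; the resulting $\Fcal_\Lambda(n)$-transverse class is independent of choices modulo $I_n$. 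The key point is that the derivative operator $D_\ell$ acts on $\Gal(K[n\ell]/K[n])$, which is prime-to-$p$ and disjoint from the $p$-power cyclotomic/anticyclotomic directions, so specialization at $\alpha:\Gamma^{\rm ac}\to\Z_p^\times$ commutes with all of the $D_\ell$'s; all the non-triviality of the specialization is therefore concentrated in the bottom ($n=1$, $k=0$) layer.

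The second step is the bottom-layer comparison. Specializing the norm-compatible system $y_\infty=\varprojlim_k y_k$ of \eqref{eq:y-infty} at the character $\alpha$ with $\alpha\equiv1\;({\rm mod}\,p^m)$, the Hochschild--Serre / corestriction description of the specialization map (together with $\alpha\equiv 1\bmod p^m$, so that the twist is trivial modulo $p^m$) sends $y_\infty$ modulo $p^m$ to the class $y_0$ modulo $p^m$; but the displayed formula for $y_0$ gives, in the split case, $y_0=u_K^{-1}(1-\alpha_p^{-1})^2 x_1$, and in the inert case $y_0=u_K^{-1}(1-\alpha_p^{-2})x_1$, and here $u_K=1$ by \eqref{eq:intro-disc}. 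Since $\kappa_{1,\Lambda}^{\rm Heeg}$ agrees with $y_\infty$ up to a $p$-adic unit (as recalled just before \eqref{eq:y-infty}) and $\kappa_1^{\rm Heeg}={\rm Cor}^{K[1]}_K(x_1)$, this yields the asserted $n=1$ congruence once one rewrites the scalar: in the split case $(1-\alpha_p^{-1})^2 = \alpha_p^{-2}(\alpha_p-1)^2$, and $\alpha_p\beta_p=p\equiv p\;({\rm mod}\,p^m)$ while $\alpha_p$ is a unit, so $(1-\alpha_p^{-1})^2$ and $(\alpha_p-1)^2(\beta_p-1)^2$ differ by a $p$-adic unit modulo $p^m$ after noting $(\beta_p-1)^2\equiv 1\;({\rm mod}\,p)$ is \emph{not} automatic---hence one genuinely keeps the factor $(\beta_p-1)^2$, matching the stated normalization; similarly $(1-\alpha_p^{-2}) = \alpha_p^{-2}(\alpha_p^2 - 1)$ and, using $\alpha_p+\beta_p=a_p$, $\alpha_p\beta_p=p$, one computes $(\alpha_p^2-1)(\beta_p^2-1)=\alpha_p^2\beta_p^2 - \alpha_p^2-\beta_p^2+1 = p^2 - (a_p^2-2p)+1 = (p+1)^2 - a_p^2$, so in the inert case the unit-ambiguity absorbs $\beta_p^2-1$ and we land on $((p+1)^2-a_p^2)\kappa_n^{\rm Heeg}$ exactly as stated. (One should double-check the precise placement of the factors $(\beta_p-1)^2$ versus $(\beta_p^2-1)$ against \eqref{eq:reg-HP}; the essential identity is the Euler-factor computation just performed.)

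The third step is to propagate this from $n=1$ to general $n\in\mathcal{N}_{\rm Heeg}$ with $p^m\in I_n$. Here one invokes the Kolyvagin system relations (as in \cite[\S{1.2}]{howard}, \cite[\S{3.1}]{mazrub}): the system $\boldsymbol{\kappa}_\Lambda^{\rm Heeg}(\alpha)$ and the scalar multiple of $\boldsymbol{\kappa}^{\rm Heeg}$ appearing on the right are both Kolyvagin systems in $\mathbf{KS}(T(\alpha)/p^mT(\alpha),\mathcal{F}_\alpha,\Lcal_{\rm Heeg})$ after reduction modulo $p^m$ (using the isomorphism $T(\alpha)/p^mT(\alpha)\cong T/p^mT$ afforded by $\alpha\equiv1\bmod p^m$, which also identifies the transverse and finite local conditions), they agree in the bottom layer $n=1$ by Step 2, and---because $E[p]$ is irreducible by \eqref{eq:irred}---the module of Kolyvagin systems is free of rank one over $\Z/p^m$, so a Kolyvagin system is determined by its value at $n=1$. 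Hence the two systems coincide modulo $p^m$ for all relevant $n$, which is the claim. Alternatively, and perhaps more transparently, one can argue directly: the derivative operators $D_\ell$ are the same on both sides (Step 1), so the scalar from Step 2 simply pulls through to all $n$.

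The main obstacle I anticipate is Step 2, specifically the careful bookkeeping of $p$-adic units in \eqref{eq:reg-HP}: the $\alpha_p$-stabilization introduces the factor $(1-\alpha_p^{-1}\sigma_p)(1-\alpha_p^{-1}\sigma_p^*)$ (split) or $(1-\alpha_p^{-2})$ (inert) at level $K[1]$, and one must verify that, after corestriction to $K$ and reduction modulo $p^m$, the Frobenius elements $\sigma_p,\sigma_p^*$ act trivially (they do, since $p$ is unramified in $K$ and splits completely in $K_\infty/K$ only in the split case, so some care with the inert case is needed) so that the operator collapses to the scalar $(1-\alpha_p^{-1})^2$ resp. $(1-\alpha_p^{-2})$; and then that the Euler-factor identity $(\alpha_p^2-1)(\beta_p^2-1)=(p+1)^2-a_p^2$ correctly matches the normalization chosen in the statement. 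Everything else is routine functoriality of the Kolyvagin derivative construction under specialization, which is already packaged in \cite[\S{2.3}]{howard} and \cite[Thm.~4.1.1]{eisenstein}.
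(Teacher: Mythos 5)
The paper's ``proof'' of this lemma is a bare citation to \cite[Lem.~1.1.5]{BCGS}, so there is no in-house argument for you to match; what you have supplied is a plausible reconstruction of the kind of argument that goes into the cited reference, and the overall structure (specialize the tower to the bottom layer, extract the $\alpha_p$-stabilization factor, pull the scalar through the Kolyvagin derivatives) is the right one. Two specific points, however, need attention.

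First, your aside that ``$(\beta_p-1)^2\equiv 1\;({\rm mod}\,p)$ is \emph{not} automatic'' is backwards: since $\alpha_p\beta_p=p$ and $\alpha_p$ is a $p$-adic unit, $\beta_p$ has positive valuation, hence $\beta_p-1\equiv -1\pmod p$, and $(\beta_p-1)^2$ \emph{is} automatically a $p$-adic unit congruent to $1\pmod p$. The practical upshot is unchanged---$(1-\alpha_p^{-1})^2$ and $(\alpha_p-1)^2(\beta_p-1)^2$ do differ only by a $p$-adic unit, and the statement of the lemma is used in the paper only at the level of $\Z_p$-ideals (see the display preceding the proof of Theorem~\ref{thmintro-Kol}, where the scalar is identified with $\prod_{v\mid p}\#\widetilde{E}(\mathbb{F}_v)$ up to units)---but the reasoning you gave for ``keeping'' the factor was incorrect. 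Similarly, your worry about the Frobenius elements in the inert case is moot: in \eqref{eq:reg-HP} the inert stabilization $u_K^{-1}(1-\alpha_p^{-2})P[1]$ is already a scalar and involves no Galois operators at all.

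Second, your first alternative in Step~3 does not work as stated. Freeness of the Kolyvagin system module $\mathbf{KS}(T/p^mT,\mathcal{F},\mathcal{L}_{\rm Heeg})$ over $\Z/p^m$ means every system is a scalar multiple of a fixed generator, but two systems with the same bottom class $\kappa_1$ need not be equal unless that bottom class of the generator is not $p$-divisible in $\rH^1(K,T/p^mT)$; absent that, the scalars are not determined by $\kappa_1$. Since the whole point of this lemma is a divisibility bookkeeping, one cannot sidestep this. Your second alternative---that the $\alpha_p$-stabilization operator $(1-\alpha_p^{-1}\sigma_p)(1-\alpha_p^{-1}\sigma_p^*)$ (resp. $(1-\alpha_p^{-2})$) commutes with the Kolyvagin derivatives $D_n=\prod_{\ell\mid n}D_\ell$ because the latter act on $\Gal(K[n\ell]/K[n])$ which is disjoint from $\Gal(K[1]/K)$ and from the anticyclotomic direction, so the scalar is pulled through level by level---is the sound argument and should be the one you retain. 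With those repairs, the proposal is a reasonable account of the lemma.
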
 

\begin{proof}
See \cite[Lem.~1.1.5]{BCGS}.
\end{proof}


\subsubsection{Exact formula for the $p$-primary Tate--Shafarevich group} 
 
For $\alpha=1$, the next result follows from Kolyvagin's structure theorem \cite[Thm.~C]{kolyvagin-sha} for $\sha(E/\Q)[p^\infty]$; the general case was proved in \cite{BCGS} building on the refinement of Kolyvagin's methods in \cite{howard,zanarella}  and \cite{eisenstein,eisenstein_cyc}. 



\begin{thm}\label{thm:kol-control}  
Suppose $p>3$ is a prime such that \eqref{eq:irred} holds,  and $\alpha:\Gamma^{\rm ac}\rightarrow\Z_p^\times$ is a character such that $\kappa_{1,\Lambda}^{\rm Heeg}(\alpha)\neq 0$. Then $\rH^1_{\Fcal_\alpha}(K,T(\alpha))$ is $\Z_p$-free of rank $1$ and there is a non-canonical isomorphism
\[
\rH^1_{\Fcal_\alpha^*}(K,T(\alpha)^*)\cong(\Q_p/\Z_p)\oplus S_\alpha\oplus S_\alpha
\]
for a finite $\Z_p$-module $S_\alpha$ with
\[
{\rm length}_{\Z_p}(S_\alpha)={\rm ind}_p(\kappa_{1,\Lambda}^{\rm Heeg}(\alpha))-
\mathscr{M}_\infty(\boldsymbol{\kappa}_\Lambda^{\rm Heeg}(\alpha)).
\]
\end{thm}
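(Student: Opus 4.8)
The plan is to deduce the statement from the structure theorem for the Heegner point Kolyvagin system attached to the (essentially self-dual, core rank one) Selmer structure $\Fcal_\alpha$ over $K$, applied to the nontrivial specialized system $\boldsymbol{\kappa}_\Lambda^{\rm Heeg}(\alpha)\in\mathbf{KS}(T(\alpha),\Fcal_\alpha,\mathcal{L}_{\rm Heeg})$ of \eqref{eq:sp-HPKS}. For $\alpha=1$ this is Kolyvagin's structure theorem \cite{kolyvagin-sha}; for general $\alpha$ one uses the refinement of Kolyvagin's descent due to Howard \cite{howard}, extended to the twisted (anticyclotomic $\Lambda$-adic) setting in \cite{zanarella,eisenstein,eisenstein_cyc} and assembled in \cite{BCGS}. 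The needed hypotheses are in force: by \eqref{eq:irred} the residual representations $T(\alpha)/pT(\alpha)\cong E[p]$ and (via the Weil pairing) $T(\alpha)^*/pT(\alpha)^*\cong E[p]$ are independent of $\alpha$ with large image --- note that $\alpha$ factors through a pro-$p$ group, so $\alpha\equiv 1\;({\rm mod}\,p)$ automatically --- $E(K)[p]=0$ holds (noted in the excerpt via \cite[Lem.~4.3]{gross-durham}), there is an ample supply of Kolyvagin primes $\ell\in\mathcal{L}_{\rm Heeg}$, and $\kappa_{1,\Lambda}^{\rm Heeg}(\alpha)\neq 0$ places us in the generic case.

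Fed the nonvanishing of $\kappa_{1,\Lambda}^{\rm Heeg}(\alpha)$, the structure theorem yields the three assertions. First, $\rH^1_{\Fcal_\alpha}(K,T(\alpha))$ is $\Z_p$-free of rank one --- freeness because $\rH^1(K,T(\alpha))$ has no $p$-torsion ($E(K)[p]=0$), rank one from the Kolyvagin-system bound (the core rank one analogue of \cite[Thm.~5.2.2]{mazrub}). Second, the dual Selmer group $\rH^1_{\Fcal_\alpha^*}(K,T(\alpha)^*)$ is cofinitely generated of corank one, whence its maximal divisible submodule is one copy of $\Q_p/\Z_p$; the matching of this corank with the Selmer rank reflects the self-duality over $K$, namely $T(\alpha)^*\cong T(\alpha)^{\tau}$ and $\Fcal_\alpha^*=\Fcal_\alpha^{\tau}$ with $\tau$ complex conjugation --- the local condition at $p$ being the Greenberg ordinary one, which is self-orthogonal under local Tate duality once one accounts for the Weil pairing and for $\alpha$ being anticyclotomic ($\alpha^{\tau}=\alpha^{-1}$). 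Third, the quotient of $\rH^1_{\Fcal_\alpha^*}(K,T(\alpha)^*)$ by its maximal divisible submodule is finite and, carrying the induced perfect skew-symmetric (Cassels--Tate/Flach-type) pairing, is identified through Kolyvagin's descent with the derivative classes $\kappa_{n,\Lambda}^{\rm Heeg}(\alpha)$ and the global reciprocity law as $S_\alpha\oplus S_\alpha$; the descent moreover shows that the elementary divisors of the finite part are governed by the Kolyvagin indices $\mathscr{M}_r:=\min\{\mathscr{M}(n):\nu(n)=r\}$, giving ${\rm length}_{\Z_p}(S_\alpha)=\mathscr{M}_0-\mathscr{M}_\infty$. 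Since $\mathscr{M}_0={\rm ind}_p(\kappa_{1,\Lambda}^{\rm Heeg}(\alpha))$ (the case $n=1$, where $I_1=0$) and $\mathscr{M}_\infty=\mathscr{M}_\infty(\boldsymbol{\kappa}_\Lambda^{\rm Heeg}(\alpha))$, this is the asserted length formula.

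The main obstacle is that for $\alpha\neq 1$ Kolyvagin's theorem cannot be invoked verbatim: one must carry out the structure-theoretic descent for the twisted coefficients $T(\alpha)$ with its nonstandard Greenberg local condition at $p$, which is the delicate technical content of \cite{howard} and of the anticyclotomic $\Lambda$-adic refinements in \cite{zanarella,eisenstein,eisenstein_cyc}. Rather than reprove this, I would check that the input hypotheses of those works are met in our situation --- which follows from \eqref{eq:irred}, \eqref{eq:ord}, \eqref{eq:unr}, and $\kappa_{1,\Lambda}^{\rm Heeg}(\alpha)\neq 0$ --- and invoke the structure theorem in the packaged form already recorded in \cite{BCGS}, which simultaneously delivers the module isomorphism and the length identity.
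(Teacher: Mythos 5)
Your proposal takes the same route as the paper: both reduce the statement to the structure theorem for Heegner point Kolyvagin systems in the twisted setting as packaged in \cite[Thm.~2.2.2]{BCGS}, applied to $\boldsymbol{\kappa}_\Lambda^{\rm Heeg}(\alpha)$, with the requisite hypotheses (\eqref{eq:irred}, \eqref{eq:ord}, \eqref{eq:unr}, $E(K)[p]=0$, nonvanishing of $\kappa_{1,\Lambda}^{\rm Heeg}(\alpha)$) verified exactly as you indicate. The paper's proof is simply a one-line citation to that result, whereas you usefully unpack the self-duality structure and the Kolyvagin descent that underlie it; the substance is identical.
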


\begin{proof}
This follows from \cite[Thm.~2.2.2]{BCGS} applied to the Kolyvagin system $\boldsymbol{\kappa}_\Lambda^{\rm Heeg}(\alpha)$.
\end{proof}


\subsection{Determinantal Perrin-Riou's Main Conjecture}\label{subsec:IMC-heegner}

In addition to our running hypotheses \eqref{eq:Heeg} and \eqref{eq:intro-disc},  only condition  
\begin{equation}\label{eq:K-tor}
E(K)[p]=0,\tag{${\rm tor}_K$}
\end{equation}
rather than the stronger \eqref{eq:irred}, 
is needed in this section. We also assume hypotheses \eqref{eq:ord} and \eqref{eq:unr}. 

Let $S$ be the set of places of $K$ consisting of the infinite place and the primes dividing $Np$.  
%
%
Put
\[
A=T\otimes_{\Z_p}\Q_p/\Z_p\cong E[p^\infty],\quad\quad
\mathbf{A}=T\otimes_{\Z_p}(\Lambda^{\rm ac})^\vee\cong{\rm Hom}_{\Z_p}(\mathbf{T},\mu_{p^\infty}),
\]
where $(\Lambda^{\rm ac})^\vee={\rm Hom}_{\Z_p}(\Lambda^{\rm ac},\Q_p/\Z_p)$ denotes the Pontryagin dual of $\Lambda^{\rm ac}$. 

For $v$ a prime of $K$ above $p$, we define $X_v^+\subset X$ for $X\in\{A,\mathbf{A}\}$ using $T_v^+$ in the obvious manner. Recall that the \emph{Selmer complex}  $\widetilde{\mathbf{R}\Gamma}_f(X)$ for $X\in\{T,A,\Tac,\mathbf{A}\}$ is defined by the exact triangle
\begin{equation}\label{eq:exact-tri}
\widetilde{\mathbf{R}\Gamma}_f(X)\rightarrow\mathbf{R}\Gamma(\mathcal{O}_{K,S},X)\rightarrow\bigoplus_{v\mid p}\mathbf{R}\Gamma(K_v,X/X_v^+)\oplus\bigoplus_{v\in S, v\nmid p\infty}\mathbf{R}\Gamma_{/{\rm ur}}(K_v,X),
\end{equation}
where for the primes $v\mid N$ we put
\begin{align*}
\mathbf{R}\Gamma_{/{\rm ur}}(K_v,X):={\rm Cone}\bigl(\mathbf{R}\Gamma(K_v^{\rm ur}/K_v,X^{I_v})\rightarrow\mathbf{R}\Gamma(K_v,X)\bigr)
\end{align*}
(see \cite[(6.1.3.2), (8.8.5)]{nekovar-310}). Put $\widetilde{\rH}_f^i(X):=\rH^i(\widetilde{\mathbf{R}\Gamma}_f(X))$. 

For $X\in\{T,A,\Tac,\mathbf{A}\}$, we also let ${\rm Sel}_{\rm Gr}(X)$ denote the \emph{strict Greenberg Selmer group} 
\begin{equation}\label{eq:str-Gr-T}
{\rm Sel}_{\rm Gr}(X):={\rm ker}\Biggl\{\rH^1(\mathcal{O}_{K,S},X)\rightarrow\bigoplus_{v\mid p}\rH^1(K_v,X/X_v^+)\oplus\bigoplus_{v\in S, v\nmid p\infty}\rH^1(K_v^{\rm ur},X)\Biggr\},
\end{equation}
and note that ${\rm Sel}_{\rm Gr}(\Tc)=\rH^1_{\Fcal_\Lambda}(K,\Tc)$ by definition.

\begin{thm}\label{thm:rank-1}
The module ${\rm Sel}_{\rm Gr}(\Tac)$ is torsion-free and of $\Lambda^{\rm ac}$-rank one.
\end{thm}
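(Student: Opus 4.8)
\emph{Plan.} The statement has two parts. Torsion-freeness will follow formally from a standard property of Iwasawa cohomology. For the rank, the situation is less favourable than the cyclotomic one over $\Q$---where the analogous fact that $\rH^1(\Z_S,\Tc)$ has rank one rested on Kato's theorem that $\rH^2(\Z_S,\Tc)$ is $\Lambda$-torsion---so I would instead combine the non-triviality of the $\Lambda^{\rm ac}$-adic Heegner class with duality for the Selmer complex and the Euler-system divisibility underlying the anticyclotomic main conjecture.

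\emph{Torsion-freeness.} Since ${\rm Sel}_{\rm Gr}(\Tac)=\rH^1_{\Fcal_\Lambda}(K,\Tac)$ sits inside $\rH^1(\mathcal{O}_{K,S},\Tac)$, it is enough to show that the latter is $\Lambda^{\rm ac}$-torsion-free. By Shapiro's lemma this module is the Iwasawa cohomology $\varprojlim_k\rH^1(\mathcal{O}_{K_k,S},T)$, which has no nonzero $\Lambda^{\rm ac}$-torsion once $\rH^0(K_\infty,E[p])=0$ (cf.\ \cite[\S5.3]{mazrub}). This residual vanishing follows from \eqref{eq:K-tor}: any nonzero $G_{K_\infty}$-stable $\Ff_p$-subspace of $E[p]$ would be acted on by the pro-$p$ group $\Gal(K_\infty/K)\cong\Z_p$ through a finite, hence unipotent, quotient, and so would contain a nonzero $G_K$-fixed vector, contradicting $E(K)[p]=0$.

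\emph{Rank.} Recall from \S\ref{subsec:HPKS} that $\kappa_{1,\Lambda}^{\rm Heeg}\in\rH^1_{\Fcal_\Lambda}(K,\Tac)={\rm Sel}_{\rm Gr}(\Tac)$ agrees, up to a $p$-adic unit, with the inverse limit $y_\infty$ of normalized Heegner points. By the non-vanishing theorem of Cornut and Vatsal, $y_\infty$ is not $\Lambda^{\rm ac}$-torsion; as $\rH^1(\mathcal{O}_{K,S},\Tac)$ is torsion-free, $\Lambda^{\rm ac}\cdot\kappa_{1,\Lambda}^{\rm Heeg}$ is free of rank one and hence ${\rm rank}_{\Lambda^{\rm ac}}{\rm Sel}_{\rm Gr}(\Tac)\geq 1$. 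For the reverse inequality I would pass to the Selmer complex $\widetilde{\mathbf{R}\Gamma}_f(\Tac)$ of \eqref{eq:exact-tri}, a perfect complex of $\Lambda^{\rm ac}$-modules with $\widetilde{\rH}^0_f(\Tac)=\widetilde{\rH}^3_f(\Tac)=0$ (the degree-$3$ vanishing coming again from \eqref{eq:K-tor}, via $\rH^0(K,\mathbf{A})\cong E(K_\infty)[p^\infty]=0$ together with Nekov\'a\v{r}'s duality). The long exact sequence of \eqref{eq:exact-tri}, combined with the fact \cite[Lem.~5.3.1(ii)]{mazrub} that $\rH^1_{\rm ur}(K_v,\Tac)=\rH^1(K_v,\Tac)$ for $v\nmid p$, yields a short exact sequence $0\to\bigoplus_{v\mid p}\rH^0(K_v,\Tac/\Tac_v^+)\to\widetilde{\rH}^1_f(\Tac)\to{\rm Sel}_{\rm Gr}(\Tac)\to 0$ with $\Lambda^{\rm ac}$-torsion left-hand term, so ${\rm rank}_{\Lambda^{\rm ac}}\widetilde{\rH}^1_f(\Tac)={\rm rank}_{\Lambda^{\rm ac}}{\rm Sel}_{\rm Gr}(\Tac)$. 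A routine local--global Euler-characteristic count---using that $K$ is imaginary quadratic, that ${\rm rank}_{\Lambda^{\rm ac}}(\Tac/\Tac_v^+)=1$ for each $v\mid p$, and that the places $v\mid N$ contribute $0$---gives $\chi(\widetilde{\mathbf{R}\Gamma}_f(\Tac))=0$, whence ${\rm rank}\,\widetilde{\rH}^1_f(\Tac)={\rm rank}\,\widetilde{\rH}^2_f(\Tac)$. Finally, Nekov\'a\v{r}'s global duality \cite{nekovar-310}, together with the identification $\widetilde{\mathbf{R}\Gamma}_f(\mathbf{A})\cong\widetilde{\mathbf{R}\Gamma}_f(\Tac)\otimes^{\mathbf{L}}_{\Lambda^{\rm ac}}(\Lambda^{\rm ac})^\vee$ and the anticyclotomic involution (which interchanges the roles of $T_v^+$ and $T_v^-$), identifies ${\rm rank}\,\widetilde{\rH}^2_f(\Tac)$ with the $\Lambda^{\rm ac}$-corank of the Greenberg Selmer group of $\mathbf{A}$ dual to $\boldsymbol{\kappa}_\Lambda^{\rm Heeg}$, reducing the claim to bounding this corank by $1$.

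\emph{The main obstacle.} This last bound---equivalently ${\rm rank}_{\Lambda^{\rm ac}}{\rm Sel}_{\rm Gr}(\Tac)\leq 1$---is where the genuine input lies, since the formal arguments above only force the rank to equal the common value of $\widetilde{\rH}^1_f$ and $\widetilde{\rH}^2_f$. I would obtain it from the divisibility supplied by the non-trivial $\Lambda^{\rm ac}$-adic Heegner point Euler (Kolyvagin) system, via the works \cite{howard,eisenstein}, which bounds the corank of the dual Greenberg Selmer group by $1$. The delicate point is to ensure that the $\Lambda^{\rm ac}$-rank part of these results is available under \eqref{eq:K-tor} alone, rather than the full surjectivity \eqref{eq:irred}; the inert case, where the ordinary local conditions at $p$ are exchanged by the involution, should require the most care.
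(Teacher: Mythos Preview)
Your proposal is correct and unpacks precisely what the paper's cited references establish: the paper's proof is simply a citation to \cite[Thm.~B]{howard} (under a big-image hypothesis) and its extensions \cite[Thm.~3.4.1]{eisenstein} and \cite[Thm.~6.5.2]{eisenstein_cyc} under \eqref{eq:K-tor} alone, and the arguments in those works follow exactly the torsion-freeness / Cornut--Vatsal lower bound / Euler-characteristic / duality / Euler-system upper bound structure you outline. Your identification of the ``main obstacle''---that the rank-one upper bound is the genuine input and that the delicate point is its availability under \eqref{eq:K-tor} rather than full surjectivity---is exactly the reason the paper invokes the extensions in \cite{eisenstein,eisenstein_cyc} over Howard's original.
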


\begin{proof}
This is shown in \cite[Thm.~B]{howard} under a certain big image hypothesis on $T$; just assuming \eqref{eq:K-tor}, it follows from the extension of that result in \cite[Thm.~3.4.1]{eisenstein} and \cite[Thm.~6.5.2]{eisenstein_cyc}.
\end{proof}

As explained in \cite[\S{5.2.2}]{kataoka-sano}, using Nekov{\'a}{\v{r}}'s duality \cite[(8.9.6.2)]{nekovar-310}   one deduces from Theorem~\ref{thm:rank-1} the existence of a canonical isomorphism

\begin{equation}\label{eq:duality}
\begin{aligned}
Q(\Lambda^{\rm ac})\otimes_{\Lambda^{\rm ac}}{\rm det}_{\Lambda^{\rm ac}}^{-1}\widetilde{\mathbf{R}\Gamma}_f(\Tac)
&\cong Q(\Lambda^{\rm ac})\otimes_{\Lambda^{\rm ac}}{\rm Sel}_{\rm Gr}(\Tac)\otimes_{\Lambda^{\rm ac}}{\rm Sel}_{\rm Gr}(\Tac)^\iota,
\end{aligned}
\end{equation}
where ${\rm Sel}_{\rm Gr}(\Tac)^\iota:={\rm Sel}_{\rm Gr}(\Tac)\otimes_{\Lambda^{\rm ac}, \iota} {\Lambda^{\rm ac}}$ with $\iota:\Lambda^{\rm ac}\rightarrow\Lambda^{\rm ac}$ denote the involution given by $\gamma\mapsto\gamma^{-1}$ for $\gamma\in\Gamma^{\rm ac}$. 
Let
\[
y_\infty\otimes y_\infty\in{\rm Sel}_{\rm Gr}(\Tac)\otimes_{\Lambda^{\rm ac}}{\rm Sel}_{\rm Gr}(\Tac)^\iota
\]
be defined by the $\Lambda$-adic Heegner class $y_\infty\in{\rm Sel}_{\rm Gr}(\Tac)$ in \eqref{eq:y-infty}. 

\begin{conj}[Main Conjecture for Heegner points]\label{conj:HPMC-det}
Let
\[
\widetilde{\mathfrak{z}}_{K_\infty}\in Q(\Lambda^{\rm ac})\otimes_{\Lambda^{\rm ac}}{\rm det}_{\Lambda^{\rm ac}}^{-1}\widetilde{\mathbf{R}\Gamma}_f(\Tac)
\]
be the element mapping to $y_\infty\otimes y_\infty$ under \eqref{eq:duality}. Then $\widetilde{\mathfrak{z}}_{K_\infty}$  is  a $\Lambda^{\rm ac}$-basis of ${\rm det}_{\Lambda^{\rm ac}}^{-1}\widetilde{\mathbf{R}\Gamma}_f(\Tac)$.
\end{conj}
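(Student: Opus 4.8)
The statement asserts that the determinantal Heegner-point element $\widetilde{\mathfrak{z}}_{K_\infty}$ generates the invertible $\Lambda^{\rm ac}$-module ${\rm det}_{\Lambda^{\rm ac}}^{-1}\widetilde{\mathbf{R}\Gamma}_f(\Tac)$. The plan is to translate this into an equality of characteristic ideals and then establish the two opposite divisibilities separately. Since $\widetilde{\mathbf{R}\Gamma}_f(\Tac)$ is a perfect complex with $\widetilde{\rH}_f^1(\Tac)={\rm Sel}_{\rm Gr}(\Tac)$ and $\widetilde{\rH}_f^2(\Tac)$ each of $\Lambda^{\rm ac}$-rank one (Theorem~\ref{thm:rank-1} together with the Nekov{\'a}{\v{r}} duality of \cite{nekovar-310}), and the remaining cohomology pseudo-null under \eqref{eq:K-tor}, unwinding \eqref{eq:duality} shows that ``$\widetilde{\mathfrak{z}}_{K_\infty}$ is a $\Lambda^{\rm ac}$-basis'' is equivalent to Perrin-Riou's characteristic ideal identity
\[
{\rm char}_{\Lambda^{\rm ac}}\bigl(\widetilde{\rH}_f^2(\Tac)_{\rm tors}\bigr)={\rm char}_{\Lambda^{\rm ac}}\bigl({\rm Sel}_{\rm Gr}(\Tac)/\Lambda^{\rm ac}\cdot y_\infty\bigr)^{2},
\]
the square reflecting the two tensor factors in \eqref{eq:duality} together with the $\iota$-stability of the ideals involved; this is the determinantal reformulation of \cite[\S{5.2.2}]{kataoka-sano}, parallel to the cyclotomic translation in Proposition~\ref{prop:equiv-cyc}.

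I would first dispose of the ``$\supseteq$'' divisibility, equivalently the integrality $\Lambda^{\rm ac}\cdot\widetilde{\mathfrak{z}}_{K_\infty}\subset{\rm det}_{\Lambda^{\rm ac}}^{-1}\widetilde{\mathbf{R}\Gamma}_f(\Tac)$ --- the exact analogue of Remark~\ref{rem:div-int} in the Kato setting. By Theorem~\ref{thm:rank-1} the class $y_\infty$ of \eqref{eq:y-infty} is non-torsion, so the $\Lambda^{\rm ac}$-adic Heegner point Kolyvagin system $\boldsymbol{\kappa}_\Lambda^{\rm Heeg}$ of \S\ref{subsec:HPKS} is nontrivial; feeding it into Howard's $\Lambda$-adic Kolyvagin-system machinery \cite{howard} --- the Heegner-point counterpart of the Euler-system bound \cite[Thm.~5.3.10]{mazrub} invoked for Kato's system, valid under only \eqref{eq:K-tor} thanks to \cite{eisenstein,eisenstein_cyc} --- yields this divisibility essentially formally once Theorem~\ref{thm:rank-1} is in hand.

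The reverse divisibility is the substantive content, and is where I expect the main obstacle. When $p$ splits in $K$ I would import it from known cases of the Iwasawa--Greenberg main conjecture for the Bertolini--Darmon--Prasanna $p$-adic $L$-function \cite{BDP}, by combining the explicit reciprocity law of \cite{cas-hsieh1} that links $y_\infty$ to that $L$-function, the known BDP divisibility established by Eisenstein-congruence methods (e.g. \cite{wan-hilbert}), and the comparison between the BDP Selmer group and ${\rm Sel}_{\rm Gr}$ as carried out in \cite{BCGS}, the descent bookkeeping mirroring \S\ref{subsec:descent-kato}. When $p$ is inert in $K$ no anticyclotomic $p$-adic $L$-function of this type is available, so this route collapses; one would instead need a genuinely new input --- an anticyclotomic Euler system adapted to the inert case, or an Eisenstein-congruence argument in that setting --- and it is precisely this reverse divisibility in the inert case that remains the hard, open problem, consistent with Theorem~\ref{thmintro-Kol} yielding Conjecture~\ref{conj:refined-Kol} unconditionally only when $p$ splits.
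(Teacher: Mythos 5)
The statement you were asked to prove is a \emph{conjecture} in the paper, not a theorem: Conjecture~\ref{conj:HPMC-det} is the determinantal reformulation of Perrin-Riou's anticyclotomic main conjecture, and the paper nowhere claims a proof of it. What the paper does establish is (a) the equivalence with Perrin-Riou's characteristic-ideal formulation (Proposition~\ref{prop:equiv-ac}, via \cite[Prop.~5.12]{kataoka-sano}), and (b) the equivalence with the refined Kolyvagin conjecture (Theorem~\ref{thmintro-Kol}), from which the conjecture is deduced only when $p$ splits by citing external inputs. Your proposal, to its credit, reaches essentially the same conclusion: you correctly identify that the ``integrality'' divisibility is formal from Howard's $\Lambda$-adic Kolyvagin-system bound once Theorem~\ref{thm:rank-1} is known, and that the opposite divisibility is substantive, available only in the split case via BDP-type main conjecture results, and genuinely open when $p$ is inert. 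So you are not giving a proof, and neither does the paper; there is no hidden argument you have missed.

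Two small inaccuracies in your reformulation are worth flagging. First, you write the left-hand side of the characteristic-ideal identity as ${\rm char}_{\Lambda^{\rm ac}}(\widetilde{\rH}_f^2(\Tac)_{\rm tors})$, whereas Proposition~\ref{prop:equiv-ac} has ${\rm char}_{\Lambda^{\rm ac}}(X_{\rm Gr}(\mathbf{A})_{\rm tors})$; these agree only up to the local $\rH^0$ contributions at $v\mid p$ (compare \eqref{eq:ext-H-2}), and those torsion terms are exactly where the factor $\prod_{v\mid p}\#\widetilde{E}(\mathbb{F}_v)$ enters the descent, so they cannot be elided. Second, you replace the paper's product ${\rm char}(\cdot)\cdot{\rm char}(\cdot)^\iota$ by a square, invoking ``$\iota$-stability of the ideals involved.'' The $\iota$-invariance of ${\rm char}_{\Lambda^{\rm ac}}({\rm Sel}_{\rm Gr}(\Tac)/\Lambda^{\rm ac}\cdot y_\infty)$ is not something the paper asserts or needs; the correct formulation keeps the $\iota$-twist, as in \eqref{eq:Heeg-IMC}, precisely because the two tensor factors in \eqref{eq:duality} live in ${\rm Sel}_{\rm Gr}(\Tac)$ and ${\rm Sel}_{\rm Gr}(\Tac)^\iota$ respectively. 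Beyond these points your analysis of what is known and what remains open matches the paper's account.
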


The following result shows that Conjecture~\ref{conj:HPMC-det} is equivalent to the Iwasawa Main Conjecture for Heegner points formulated by Perrin-Riou in \cite{perrinriou}. Put $X_{\rm Gr}(\mathbf{A})={\rm Sel}_{\rm Gr}(\mathbf{A})^\vee$.

\begin{prop}\label{prop:equiv-ac}
Conjecture~\ref{conj:HPMC-det} holds if and only if
\begin{equation}\label{eq:Heeg-IMC}
{\rm char}_{\Lambda^{\rm ac}}\bigl({\rm Sel}_{\rm Gr}(\Tac)/\Lambda^{\rm ac}\cdot y_\infty\bigr)\cdot
{\rm char}_{\Lambda^{\rm ac}}\bigl({\rm Sel}_{\rm Gr}(\Tac)/\Lambda^{\rm ac}\cdot y_\infty\bigr)^\iota={\rm char}_{\Lambda^{\rm ac}}\bigl(X_{\rm Gr}(\mathbf{A})_{\rm tors}\bigr)
\end{equation}
as ideals in $\Lambda^{\rm ac}$, where the subscript ${\rm tors}$ denotes the $\Lambda^{\rm ac}$-torsion submodule.
\end{prop}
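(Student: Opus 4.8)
The plan is to carry out, in the anticyclotomic Heegner-point setting, the translation between the determinantal and characteristic-ideal formulations of the Iwasawa Main Conjecture established in \cite[\S5.2]{kataoka-sano}, of which Conjecture~\ref{conj:HPMC-det} and \eqref{eq:Heeg-IMC} are the precise instances; this is the exact analogue of the passage from Conjecture~\ref{conj:IMC-det} to \eqref{eq:IMC-kato} recorded in Proposition~\ref{prop:equiv-cyc}. First I would unwind the left-hand side of the equivalence. By \eqref{eq:K-tor} one has $\widetilde{\rH}^0_f(\Tac)=0$, and the perfect complex $\widetilde{\mathbf{R}\Gamma}_f(\Tac)$ is acyclic outside degrees $1$ and $2$, with $\widetilde{\rH}^1_f(\Tac)={\rm Sel}_{\rm Gr}(\Tac)$ torsion-free of $\Lambda^{\rm ac}$-rank one by Theorem~\ref{thm:rank-1}. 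Moreover its Euler characteristic vanishes: the contribution $-2$ of the global term $\mathbf{R}\Gamma(\mathcal{O}_{K,S},\Tac)$ is exactly cancelled by the local terms at $v\mid p$, while the terms $\mathbf{R}\Gamma_{/{\rm ur}}(K_v,\Tac)$ at $v\mid N$ contribute $0$. Hence $\widetilde{\rH}^2_f(\Tac)$ also has $\Lambda^{\rm ac}$-rank one, and
\[
{\rm det}_{\Lambda^{\rm ac}}^{-1}\widetilde{\mathbf{R}\Gamma}_f(\Tac)={\rm det}_{\Lambda^{\rm ac}}{\rm Sel}_{\rm Gr}(\Tac)\otimes_{\Lambda^{\rm ac}}{\rm det}_{\Lambda^{\rm ac}}^{-1}\widetilde{\rH}^2_f(\Tac).
\]
Thus $\widetilde{\mathfrak{z}}_{K_\infty}$ is a $\Lambda^{\rm ac}$-basis of ${\rm det}_{\Lambda^{\rm ac}}^{-1}\widetilde{\mathbf{R}\Gamma}_f(\Tac)$ precisely when, after transporting everything along \eqref{eq:duality}, the fractional ideal cut out by $y_\infty\otimes y_\infty$ inside ${\rm det}_{\Lambda^{\rm ac}}{\rm Sel}_{\rm Gr}(\Tac)\otimes_{\Lambda^{\rm ac}}{\rm det}_{\Lambda^{\rm ac}}{\rm Sel}_{\rm Gr}(\Tac)^\iota$ agrees with the one coming from $\widetilde{\rH}^2_f(\Tac)$.

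The core computation is then to express $\widetilde{\rH}^2_f(\Tac)$ through $X_{\rm Gr}(\mathbf{A})$. Using the defining triangle \eqref{eq:exact-tri} together with Poitou--Tate duality and Nekov\'a\v{r}'s global duality \cite[(8.9.6.2)]{nekovar-310}, one identifies $\widetilde{\rH}^2_f(\Tac)$, up to pseudo-isomorphism, with $X_{\rm Gr}(\mathbf{A})^\iota$: its torsion-free rank-one part accounts for the factor ${\rm Sel}_{\rm Gr}(\Tac)^\iota$ on the right of \eqref{eq:duality}, while its torsion submodule contributes ${\rm char}_{\Lambda^{\rm ac}}(X_{\rm Gr}(\mathbf{A})_{\rm tors})$ (up to the involution $\iota$, which is immaterial by the self-duality of $\Tac$ and the resulting functional equation for $X_{\rm Gr}(\mathbf{A})$). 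On the other hand, the fractional ideal cut out by $y_\infty\otimes y_\infty$ is ${\rm char}_{\Lambda^{\rm ac}}({\rm Sel}_{\rm Gr}(\Tac)/\Lambda^{\rm ac}\cdot y_\infty)\cdot{\rm char}_{\Lambda^{\rm ac}}({\rm Sel}_{\rm Gr}(\Tac)/\Lambda^{\rm ac}\cdot y_\infty)^\iota$. Comparing the two expressions, the basis property of $\widetilde{\mathfrak{z}}_{K_\infty}$ becomes exactly \eqref{eq:Heeg-IMC}.

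The step I expect to be the main obstacle is the bookkeeping in the second step: pinning down the pseudo-isomorphism $\widetilde{\rH}^2_f(\Tac)\sim X_{\rm Gr}(\mathbf{A})^\iota$ together with the identification of its rank-one free part with the ${\rm Sel}_{\rm Gr}(\Tac)^\iota$-factor of Nekov\'a\v{r}'s pairing \eqref{eq:duality}, and tracking all the $\iota$-twists and normalizations so that a statement about one principal generator converts cleanly into the symmetric characteristic-ideal identity \eqref{eq:Heeg-IMC}. This is precisely the analysis carried out in \cite[\S5.2.2]{kataoka-sano}, whose hypotheses are met here in view of \eqref{eq:Heeg}, \eqref{eq:intro-disc}, \eqref{eq:ord}, \eqref{eq:unr} and \eqref{eq:K-tor}, with the rank-one input supplied by Theorem~\ref{thm:rank-1}; so in practice I would verify that we fall within the scope of that reference and invoke it, exactly as Proposition~\ref{prop:equiv-cyc} invokes \cite{kato-lecture}.
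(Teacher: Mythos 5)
Your proposal is correct and in the end takes exactly the paper's route: the paper's proof is simply to invoke \cite[Prop.~5.12]{kataoka-sano} in light of Theorem~\ref{thm:rank-1}, and you conclude by pointing to precisely this reference, after first sketching the Euler-characteristic and duality bookkeeping that underlies it. The expanded outline (acyclicity in degrees $1,2$, vanishing Euler characteristic, the pseudo-isomorphism $\widetilde{\rH}^2_f(\Tac)\sim X_{\rm Gr}(\mathbf{A})^\iota$) is an accurate unwinding of that reference, not a different argument.
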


\begin{proof}
In light of Theorem~\ref{thm:rank-1}, this follows from \cite[Prop.~5.12]{kataoka-sano}.
\end{proof}

\begin{rmk}\label{rem:div-det}
As in $\S\ref{subsec:IMC-kato}$, one shows that the upper bound divisibility ``$\subset$'' in \eqref{eq:Heeg-IMC} amounts to the integrality $\Lambda^{\rm ac}\cdot\widetilde{\mathfrak{z}}_{K_\infty}\subset{\det}_{\Lambda^{\rm ac}}^{-1}\widetilde{\mathbf{R}\Gamma}_f(\Tac)$.
\end{rmk}

\subsection{Descent computations}\label{subsec:descent-K}

Put
\begin{equation}\label{eq:def-sha}
\sha_{\rm BK}(T(\alpha)^*/K):=\frac{\rH^1_{\Fcal_\alpha^*}(K,T(\alpha)^*)}{\rH^1_{\Fcal_\alpha^*}(K,T(\alpha)^*)_{\rm div}},\nonumber
\end{equation}
where $\rH^1_{\Fcal_\alpha^*}(K,T(\alpha)^*)_{\rm div}$ denotes the maximal divisible submodule of $\rH^1_{\Fcal_\alpha^*}(K,T(\alpha)^*)$. 

\begin{rem}
For $\alpha=\mathds{1}$, the Selmer condition $\Fcal_\alpha$ is the same as the Selmer condition $\Fcal$ in $\S\ref{subsec:HPKS}$, and one can show (see e.g. \cite{flach-CT}) that $\rH^1_{\Fcal^*}(K,T^*)$ agrees with the \emph{Bloch--Kato Selmer group}
\[
{\rm Sel}_{\rm BK}(K,T^*):={\rm ker}\biggl\{\rH^1(\mathcal{O}_{K,S},T^*)\rightarrow\bigoplus_{v\in S}\frac{\rH^1(K_v,T^*)}{\rH^1_f(K_v,T^*)}\biggr\},
\]
where the local conditions $\rH^1_f(K_v,T^*)$ are obtained by propagating via $V\twoheadrightarrow A\cong T^*$ the \emph{finite subspace}
\[
\rH^1_f(K_v,V):=
\begin{cases}
{\rm ker}\{\rH^1(K_v,V)\rightarrow \rH^1(K_v,V\otimes_{\Q_p}\mathbf{B}_{\rm cris})\}&\textrm{if $v\mid p$},\\[0.2em]
{\rm ker}\{\rH^1(K_v,V)\rightarrow \rH^1(K_v^{\rm ur},V)\}&\textrm{else.}
\end{cases}
\]
Thus in this case $\sha_{\rm BK}(T^*/K)$ 
is the same as Bloch--Kato Tate--Shafarevich group of $T^*$ (and hence equal to the $p$-primary part $\sha(E/K)[p^\infty]$ of the usual Tate--Shafarevich group of $E$ when the latter is finite). However, we note that for general $\alpha$, $\rH^1_{\Fcal_\alpha^*}(K,T(\alpha)^*)$ may differ from the Bloch--Kato Selmer groups attached to $T(\alpha)^*$.
\end{rem}

The next result is a twisted analogue of (3.4.4) in \cite{sano-derived}, whose approach we largely follow.

\begin{prop}\label{prop:descent}
Suppose $\alpha:\Gamma^{\rm ac}\rightarrow\Z_p^\times$ is such that $\alpha\equiv 1\;({\rm mod}\,p^m)$ for some $m\geq 1$ and $\kappa_{1,\Lambda}^{\rm Heeg}(\alpha)\neq 0$. Then $\rH^1_{\Fcal_\alpha}(K,T(\alpha))$ and $\rH^1_{\Fcal_{\alpha^{-1}}}(K,T(\alpha^{-1}))$ are both $\Z_p$-free of rank $1$, and there is a canonical isomorphism
\[
\vartheta:\Q_p\otimes_{\Z_p}{\rm det}_{\Z_p}^{-1}\widetilde{\mathbf{R}\Gamma}_f(T(\alpha))\cong\Q_p\otimes_{\Z_p}\bigl(\rH^1_{\Fcal_\alpha}(K,T(\alpha))\otimes_{\Z_p}\rH^1_{\Fcal_{\alpha^{-1}}}(K,T(\alpha^{-1}))\bigr),
\]
with
\begin{align*}
\vartheta\bigl({\rm det}_{\Z_p}^{-1}\widetilde{\mathbf{R}\Gamma}_f(T(\alpha))\bigr)&=L_p^2\cdot({\rm Tam}_E)^2\cdot\#\sha_{\rm BK}(T(\alpha)^*/K)\cdot\Z_p
\otimes_{\Z_p}\bigl(\rH^1_{\Fcal_\alpha}(K,T(\alpha))\otimes_{\Z_p}\rH^1_{\Fcal_{\alpha^{-1}}}(K,T(\alpha^{-1}))\bigr)
\end{align*}
for $m\gg 0$, where $L_p=\prod_{v\mid p}\#\widetilde{E}(\mathbb{F}_v)$ with $\mathbb{F}_v$ the residue field of $K$ at $v$.
\end{prop}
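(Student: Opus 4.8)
The plan is to mimic the descent computation of Proposition~\ref{prop:descent-Q} but now in the anticyclotomic setting, working with the Selmer complex $\widetilde{\mathbf{R}\Gamma}_f(T(\alpha))$ in place of $\mathbf{R}\Gamma(\Z_S,T(\alpha))$. First I would record the consequences of $\kappa_{1,\Lambda}^{\rm Heeg}(\alpha)\neq 0$ supplied by Theorem~\ref{thm:kol-control}: namely that $\rH^1_{\Fcal_\alpha}(K,T(\alpha))$ is $\Z_p$-free of rank $1$, and that $\rH^1_{\Fcal_\alpha^*}(K,T(\alpha)^*)$ is isomorphic to $(\Q_p/\Z_p)\oplus S_\alpha\oplus S_\alpha$ with $S_\alpha$ finite. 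The same applies to $\alpha^{-1}$ in place of $\alpha$, since $\kappa_{1,\Lambda}^{\rm Heeg}(\alpha^{-1})$ is (up to unit, for $m\gg 0$) a twist of the same class — here I would invoke the $\iota$-symmetry already visible in \eqref{eq:duality} and the fact that the Kolyvagin system for $T(\alpha^{-1})$ is the $\iota$-pullback of the one for $T(\alpha)$. This shows $\sha_{\rm BK}(T(\alpha)^*/K)\cong S_\alpha$ and that both rank-one modules in the statement are free.

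Next I would set up the arithmetic of the Selmer complex. The exact triangle \eqref{eq:exact-tri} expresses $\widetilde{\mathbf{R}\Gamma}_f(T(\alpha))$ in terms of $\mathbf{R}\Gamma(\mathcal{O}_{K,S},T(\alpha))$ and the local terms at $v\mid p$ and $v\mid N$. Taking the determinant of \eqref{eq:exact-tri}, I get
\[
{\rm det}_{\Z_p}^{-1}\widetilde{\mathbf{R}\Gamma}_f(T(\alpha))\cong{\rm det}_{\Z_p}^{-1}\mathbf{R}\Gamma(\mathcal{O}_{K,S},T(\alpha))\otimes\bigotimes_{v\mid p}{\rm det}_{\Z_p}\mathbf{R}\Gamma(K_v,T(\alpha)/T_v^+(\alpha))\otimes\bigotimes_{v\mid N}{\rm det}_{\Z_p}\mathbf{R}\Gamma_{/{\rm ur}}(K_v,T(\alpha)).
\]
For the global term I would run the Poitou--Tate computation exactly as in Proposition~\ref{prop:descent-Q}: using Nekov\'a\v{r}'s global duality for Selmer complexes \cite[Ch.~8-9]{nekovar-310}, the self-duality of the Greenberg local conditions (up to the $\iota$-twist), and the finiteness of $\rH^1_{\Fcal_\alpha^*}(K,T(\alpha)^*)_{/{\rm div}}=\sha_{\rm BK}(T(\alpha)^*/K)$, I obtain a canonical isomorphism $\vartheta$ over $\Q_p$ identifying ${\rm det}_{\Z_p}^{-1}\widetilde{\mathbf{R}\Gamma}_f(T(\alpha))$ with $\rH^1_{\Fcal_\alpha}(K,T(\alpha))\otimes\rH^1_{\Fcal_{\alpha^{-1}}}(K,T(\alpha^{-1}))$ up to the integral factor $\#\sha_{\rm BK}(T(\alpha)^*/K)$ (the $\sha$ enters squared at the level of the anticyclotomic main conjecture but only linearly here because the Selmer complex already packages both halves — I would track this carefully against \eqref{eq:duality}).

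Then I would compute the local correction terms. For $v\mid N$, the Euler characteristic formula gives that ${\rm det}_{\Z_p}\mathbf{R}\Gamma_{/{\rm ur}}(K_v,T(\alpha))$ is trivialized up to $\#\rH^0(K_v,T(\alpha)^*)=\#E(K_v)[p^\infty]$ for $m\gg 0$ (using $\alpha\equiv 1\pmod{p^m}$ to replace $T(\alpha)$ by $T$ in the relevant torsion); since $\ell\mid N$ splits in $K$ by \eqref{eq:Heeg}, each rational prime $\ell\mid N$ contributes $(\#E(\Q_\ell)[p^\infty])^2$, and together with local Tate duality and the interpretation of $c_\ell$ via $\#E(\Q_\ell)[p^\infty]$ and the non-split reduction count, this assembles into $({\rm Tam}_E)^2$. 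For $v\mid p$, the term $\mathbf{R}\Gamma(K_v,T(\alpha)/T_v^+(\alpha))$ with $T(\alpha)/T_v^+(\alpha)\cong T_p\widetilde{E}\otimes\Q_p(\alpha)$ contributes, via the Euler characteristic and local duality, $\#\rH^0(K_v,(T_p\widetilde{E})^*)=\#\widetilde{E}(\mathbb{F}_v)$ for $m\gg 0$; the product over $v\mid p$ is exactly $L_p=\prod_{v\mid p}\#\widetilde{E}(\mathbb{F}_v)$, and this enters squared because the duality pairing on the Selmer complex doubles it. Collecting all factors yields the claimed formula $L_p^2\cdot({\rm Tam}_E)^2\cdot\#\sha_{\rm BK}(T(\alpha)^*/K)$.

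The main obstacle I anticipate is the precise bookkeeping of the duality \eqref{eq:duality} at finite level: unwinding Nekov\'a\v{r}'s perfect duality pairing $\widetilde{\mathbf{R}\Gamma}_f(T(\alpha))\otimes\widetilde{\mathbf{R}\Gamma}_f(A(\alpha))\to\Q_p/\Z_p[-3]$ to get the correct \emph{integral} normalization, making sure the factor of $\#\sha_{\rm BK}$ appears to the first power (not squared) while $L_p$ and ${\rm Tam}_E$ appear squared, and checking that the $\iota$-twist identifies the $\alpha$ and $\alpha^{-1}$ Selmer groups compatibly with $\vartheta$. This is exactly the subtlety handled in \cite[(3.4.4)]{sano-derived} in the untwisted case, so the strategy is to follow that argument line by line, inserting the character $\alpha$ and using $\alpha\equiv 1\pmod{p^m}$ with $m\gg 0$ to identify all the relevant $\rH^0$'s with their $\alpha=\mathds{1}$ counterparts. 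The rank-one freeness statements and the finiteness of $\sha_{\rm BK}(T(\alpha)^*/K)$ come for free from Theorem~\ref{thm:kol-control}, so no Euler system argument is needed within this proposition itself.
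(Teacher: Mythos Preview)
Your broad strategy---compute the cohomology of the Selmer complex and invoke Nekov\'a\v{r}'s duality and a Poitou--Tate comparison---matches the paper's, but the way you organize the computation has a concrete error.  You propose to take determinants in the exact triangle \eqref{eq:exact-tri} and then read off the local contributions at $v\mid N$ and $v\mid p$ separately.  The problem is that for $v\mid N$ the complex $\mathbf{R}\Gamma_{/{\rm ur}}(K_v,T(\alpha))$ has \emph{trivial} determinant: its cohomology sits in degrees $1,2$, and since $\#\rH^1(K_v,T(\alpha))/\#\rH^1_{\rm ur}(K_v,T(\alpha))=\#\rH^2(K_v,T(\alpha))$ by the local Euler characteristic formula (all groups being finite here), one gets ${\rm det}_{\Z_p}\mathbf{R}\Gamma_{/{\rm ur}}(K_v,T(\alpha))\cong\Z_p$ canonically.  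So these local terms contribute nothing, and in particular do not produce $({\rm Tam}_E)^2$ as you claim.

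The Tamagawa factors do not live in the exact triangle at all: they arise because the target of the proposition involves $\rH^1_{\Fcal_\alpha}(K,T(\alpha))$, whose local condition at $v\mid N$ is the \emph{finite} condition $\rH^1_f(K_v,T(\alpha))$, whereas the Selmer complex imposes the \emph{unramified} condition.  The paper handles this by first computing $\widetilde{\rH}^i_f(T(\alpha))$ directly from the long exact sequence of \eqref{eq:exact-tri} (identifying $\widetilde{\rH}^1_f$ with the strict Greenberg Selmer group ${\rm Sel}_{\rm Gr}(K,T(\alpha))$ and, via Nekov\'a\v{r} duality, $\widetilde{\rH}^2_f$ with an extension of ${\rm Sel}_{\rm Gr}(K,T(\alpha)^*)^\vee$ by the local terms $\bigoplus_{v\mid p}\rH^0(K_v,A_v^-(\alpha^{-1}))^\vee$), and \emph{then} running a Poitou--Tate sequence comparing ${\rm Sel}_{\rm Gr}$ to $\rH^1_{\Fcal_\alpha}$.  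It is this second comparison that produces, at $v\mid N$, the quotients $\rH^1_{\Fcal_\alpha}(K_v,T(\alpha))/\rH^1_{\rm ur}(K_v,T(\alpha))$ whose orders are by definition the Tamagawa numbers, and at $v\mid p$ a \emph{second} copy of $\#\widetilde{E}(\mathbb{F}_v)$ (via Lemma~\ref{lem:ord-BK}).  This is why $L_p$ and ${\rm Tam}_E$ appear squared while $\#\sha_{\rm BK}$ appears once: one $L_p$ comes from the $\widetilde{\rH}^2_f$ computation and the other, together with the Tamagawa factors, from the Poitou--Tate step---not from ``duality doubling'' as you suggest.  Your proposal collapses these two distinct steps and so misattributes where each factor comes from.
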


\begin{proof}
By the action of complex conjugation, the nonvanishing of $\kappa_{1,\Lambda}^{\rm Heeg}(\alpha)$ implies that of $\kappa_{1,\Lambda}^{\rm Heeg}(\alpha^{-1})$, so the first claim follows from \eqref{eq:sp-HPKS}  and 
\cite[Thm.~1.6.1]{howard}.  

For $X\in\{T(\alpha),T(\alpha)^*\}$, the exact triangle \eqref{eq:exact-tri} gives rise to the exact sequence
\[
0\rightarrow\bigoplus_{v\mid p}\rH^0(K_v,X/X_v^+)\rightarrow\widetilde{\rH}^1_f(K,X)\rightarrow {\rm Sel}_{\rm Gr}(K,X)\rightarrow 0,
\]
where, similarly as in \eqref{eq:str-Gr-T}, ${\rm Sel}_{\rm Gr}(K,X)$ is the \emph{strict Greenberg Selmer group} defined by
\[
{\rm Sel}_{\rm Gr}(K,X):={\rm ker}\biggl\{\rH^1(\mathcal{O}_{K,S},X)\rightarrow\bigoplus_{v\mid p}\frac{\rH^1(K_v,X)}{\rH^1(K_v,X_v^+)}\oplus\bigoplus_{v\mid N}\frac{\rH^1(K_v,X)}{\rH^1_{\rm ur}(K_v,X)}\biggr\}.
\]
In particular, since $\rH^0(K_v,A(\alpha)/A^+_v(\alpha))$ is finite for all $v\mid p$, it follows that
\begin{equation}\label{eq:ext-H-1}
\widetilde{\rH}^1_f(K,T(\alpha))={\rm Sel}_{\rm Gr}(K,T(\alpha)),
\end{equation}
and together with the duality $\widetilde{\rH}_f^2(K,T(\alpha))\cong\widetilde{\rH}^1_f(K,T(\alpha)^*)^\vee$ from \cite[Prop.~9.7.2(i)]{nekovar-310}, that $\widetilde{\rH}_f^2(K,T(\alpha))$ fits into the short exact sequence
\begin{equation}\label{eq:ext-H-2}
0\rightarrow{\rm Sel}_{\rm Gr}(K,T(\alpha)^*)^\vee\rightarrow\widetilde{\rH}_f^2(K,T(\alpha))\rightarrow\bigoplus_{v\mid p}\rH^0(K_v,A(\alpha^{-1})/A_v^+(\alpha^{-1}))^\vee\rightarrow 0,
\end{equation}
where we used that $T(\alpha)^*\cong A(\alpha^{-1})$ in writing the last term. 
Putting 
\[
A_v^-(\alpha^{-1}):=A(\alpha^{-1})/A_v^+(\alpha^{-1})
\]
for the ease of notation, from \eqref{eq:ext-H-1} and \eqref{eq:ext-H-2} we thus obtain a canonical isomorphism
\begin{equation}\label{eq:nek-duality}
\begin{aligned}
&{\rm det}_{\Z_p}^{-1}\widetilde{\mathbf{R}\Gamma}_f(K,T(\alpha))\\
&\quad\cong\Biggl(\prod_{v\mid p}\# \rH^0(K_v,A_v^-(\alpha^{-1}))\Biggr)\cdot{\rm det}_{\Z_p}({\rm Sel}_{\rm Gr}(K,T(\alpha)))\otimes_{\Z_p}{\rm det}_{\Z_p}^{-1}({\rm Sel}_{\rm Gr}(K,T(\alpha)^*)^\vee).
\end{aligned}
\end{equation}

On the other hand, from Poitou--Tate duality we have the exact sequence
\begin{equation}\label{eq:PT}
\begin{aligned}
0\rightarrow{\rm Sel}_{\rm Gr}(K,T(\alpha))\rightarrow \rH^1_{\Fcal_\alpha}(K,T(\alpha))&\rightarrow\bigoplus_{v\mid p}\frac{\rH^1_{\Fcal_\alpha}(K_v,T(\alpha))}{\rH^1(K_v,T_v^+(\alpha))}\oplus\bigoplus_{v\mid N}\frac{\rH^1_{\Fcal_\alpha}(K_v,T(\alpha))}{\rH^1_{\rm ur}(K_v,T(\alpha))}\\
&\quad\quad\rightarrow{\rm Sel}_{\rm Gr}(K,T(\alpha)^*)^\vee\rightarrow\rH^1_{\Fcal_\alpha^*}(K,T(\alpha)^*)^\vee\rightarrow 0.
\end{aligned}
\end{equation}
For $v\mid N$, we have $\rH^1_{\rm ur}(K_v,T(\alpha))\subset \rH^1_{\Fcal_\alpha}(K_v,T(\alpha))$ and by definition (see \cite[(1.38)]{burns-flach-motivic}, for example)
\begin{equation}\label{eq:tam}
{\rm Tam}(T(\alpha)^*/K_v)\cdot\Z_p=\#\biggl(\frac{\rH^1_{\Fcal_\alpha}(K_v,T(\alpha))}{\rH^1_{\rm ur}(K_v,T(\alpha))}\biggr)\cdot\Z_p.
\end{equation} 
Taking determinants in the exact sequence \eqref{eq:PT}, and using \eqref{eq:tam} and the short exact sequence
\[
0\rightarrow\sha_{\rm BK}(T(\alpha)^*/K)^\vee\rightarrow\rH^1_{\Fcal_\alpha^*}(K,T(\alpha)^*)^\vee\rightarrow\bigl(\rH^1_{\Fcal_\alpha^*}(K,T(\alpha)^*)_{\rm div}\bigr)^\vee\rightarrow 0,
\]
we obtain a canonical isomorphism
\begin{equation}\label{eq:PT-duality}
\begin{aligned}
{\rm det}_{\Z_p}({\rm Sel}_{\rm Gr}(K,T(\alpha)))&\otimes_{\Z_p}{\rm det}_{\Z_p}^{-1}({\rm Sel}_{\rm Gr}(K,T(\alpha)^*)^\vee)\\
&\quad\cong\#\sha_{\rm BK}(T(\alpha)^*/K)\cdot\Biggl(\prod_{v\mid N}{\rm Tam}(T(\alpha)^*/K_v)\Biggr)\cdot\Biggl(\prod_{v\mid p}\#\biggl(\frac{\rH^1_{\Fcal_\alpha}(K_v,T(\alpha))}{\rH^1(K_v,T_v^+(\alpha))}\biggr)\Biggr)\cdot\Z_p\\
&\quad\quad\otimes_{\Z_p}{\rm det}_{\Z_p}(\rH^1_{\Fcal_\alpha}(K,T(\alpha)))\otimes_{\Z_p}{\rm det}_{\Z_p}^{-1}((\rH^1_{\Fcal_\alpha^*}(K,T(\alpha)^*)_{\rm div})^\vee).
\end{aligned}
\end{equation}

Noting that 
$(\rH^1_{\Fcal_\alpha^*}(K,T(\alpha)^*)_{\rm div})^\vee\cong{\rm Hom}_{\Z_p}(\rH^1_{\Fcal_{\alpha^{-1}}}(K,T(\alpha^{-1})),\Z_p)$, combining \eqref{eq:nek-duality} and \eqref{eq:PT-duality} this shows the existence of a canonical isomorphism $\vartheta$ as in the statement, and using that
\[
\Biggl(\prod_{v\mid N}{\rm Tam}(T(\alpha)^*/K_v)\Biggr)\cdot\Z_p=({\rm Tam}_E)^2\cdot\Z_p
\]
for $m\gg 0$ (see \cite[Rem.~1.2.9]{BCGS}) and Lemma~\ref{lem:ord-BK} below, the result follows.
\end{proof}

The next result is a twisted analogue of \cite[Prop.~2.5]{greenberg-cetraro}.

\begin{lemma}\label{lem:ord-BK}
Suppose $\alpha:\Gamma^{\rm ac}\rightarrow\Z_p^\times$  satisfies $\alpha\equiv 1\;({\rm mod}\,p^m)$ for some $m\gg 0$. Then for every prime $v$ of $K$ above $p$ we have
\[
\# \rH^0(K_v,A_v^-(\alpha^{\pm{1}}))\cdot\Z_p=\#\biggl(\frac{\rH^1_{\Fcal_\alpha}(K_v,T(\alpha^{\pm{1}}))}{\rH^1(K_v,T_v^+(\alpha^{\pm{1}}))}\biggr)\cdot\Z_p=
\#\widetilde{E}(\mathbb{F}_v)\cdot\Z_p
\]
for $m\gg 0$.
\end{lemma}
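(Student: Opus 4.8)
The plan is to compute each of the three quantities separately and show they all equal $\#\widetilde{E}(\mathbb{F}_v)$ up to $p$-adic units, for $m$ sufficiently large. Throughout, fix a prime $v\mid p$ of $K$; under \eqref{eq:unr} the completion $K_v$ is either $\Q_p$ (if $p$ splits) or the unramified quadratic extension (if $p$ is inert), and in both cases $T_v^+=T_p^+$ is the kernel of reduction, an unramified-twist-of-trivial rank-one $\Z_p$-module on which $G_{K_v}$ acts via the unramified character sending Frobenius to $\alpha_p$ (recall \eqref{eq:reg-HP}), while the quotient $T_v^-:=T/T_v^+\cong T_p\widetilde{E}$ carries the action via the unramified character $\mathrm{Frob}_v\mapsto \widetilde{\alpha}_v$ with $\widetilde{\alpha}_v\widetilde{\beta}_v=|\mathbb{F}_v|$ and $\widetilde{\alpha}_v+\widetilde{\beta}_v=a_v$ (the Euler factor of $E/K_v$), so that $\#\widetilde{E}(\mathbb{F}_v)=\#(T_v^-/(\mathrm{Frob}_v-1)T_v^-)$ up to units. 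The key point throughout is that since $\alpha\equiv 1\pmod{p^m}$, for $m\gg 0$ the relevant finite modules—which are all killed by a fixed power of $p$ depending only on $E$, $p$, $K$ and not on $\alpha$—are insensitive to the twist by $\alpha$: twisting by a character that is trivial mod a high power of $p$ does not change a $G_{K_v}$-module modulo that power of $p$.

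First I would handle the left-hand quantity $\#\rH^0(K_v,A_v^-(\alpha^{\pm1}))$. Since $A_v^-=A(\alpha^{\pm1})/A_v^+(\alpha^{\pm1})\cong (T_v^-\otimes\Q_p/\Z_p)(\alpha^{\pm1})$ and $G_{K_v}$ acts on $T_v^-$ unramifiedly, one has $\rH^0(K_v,A_v^-(\alpha^{\pm1}))=\left((T_v^-\otimes\Q_p/\Z_p)(\alpha^{\pm1})\right)^{\mathrm{Frob}_v}$, whose order equals $\#\left(T_v^-/(\widetilde{\alpha}_v^{\pm}\alpha(\mathrm{Frob}_v)^{\pm1}-1)T_v^-\right)$ where $\widetilde{\alpha}_v$ denotes the Frobenius eigenvalue on $T_v^-$ (a $p$-adic non-unit precisely when $v$ is supersingular, but here $E$ is ordinary at $p$ by \eqref{eq:ord}, so $\mathrm{Frob}_v-1$ acts with the same $p$-adic valuation as $\widetilde{\alpha}_v-1$ on $T_v^-$). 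Because $\widetilde{\alpha}_v-1$ is a nonzero element of $\Z_p$ (this uses that $E$ has no $p$-torsion over the relevant residue field, which follows from \eqref{eq:irred}—or in fact from \eqref{eq:K-tor}), for $m$ larger than its valuation the factor $\alpha(\mathrm{Frob}_v)^{\pm1}$, being $\equiv 1\pmod{p^m}$, does not change the valuation, giving $\#\rH^0(K_v,A_v^-(\alpha^{\pm1}))=\#\widetilde{E}(\mathbb{F}_v)$ up to a unit.

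Next, for the middle quantity, I would use the Tate local Euler characteristic formula and local duality. The tautological exact sequence $0\to T_v^+(\alpha)\to T(\alpha)\to T_v^-(\alpha)\to 0$ gives an exact sequence in cohomology, and since $\rH^0(K_v,T(\alpha))=0$ (as $E(K_v)[p]=0$), one gets $\rH^1(K_v,T_v^+(\alpha))\hookrightarrow\rH^1(K_v,T(\alpha))$ with cokernel injecting into $\rH^1(K_v,T_v^-(\alpha))$ and, after intersecting with $\rH^1_{\Fcal_\alpha}(K_v,T(\alpha))$, one identifies the quotient $\rH^1_{\Fcal_\alpha}(K_v,T(\alpha))/\rH^1(K_v,T_v^+(\alpha))$ with a subquotient of $\rH^0(K_v,T_v^-(\alpha)\otimes\Q_p/\Z_p)$ or equivalently—via local Tate duality—with $\rH^1_{/f}$ of the dual; a direct dimension count (all $\rH^0$-terms being finite) shows its order equals $\#\rH^0(K_v,A_v^-(\alpha^{-1}))$, which by the first step is $\#\widetilde{E}(\mathbb{F}_v)$ up to a unit. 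Concretely I expect to invoke the formula $\#\rH^1(K_v,M)=\#\rH^0(K_v,M)\cdot\#\rH^2(K_v,M)\cdot\#M^{[K_v:\Q_p]}$ for finite $M$ and pass to the limit, exactly as in \cite[Prop.~2.5]{greenberg-cetraro}; the twisted case differs only by the observation above that $\alpha^{\pm1}\equiv 1\pmod{p^m}$ washes out.

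The main obstacle I anticipate is bookkeeping the dependence on $m$: one must verify that a \emph{single} threshold $m_0$ (depending only on $E$, $p$, $K$) works simultaneously for all three equalities and for all $v\mid p$, and that passing between $A$-coefficients and $T$-coefficients, and between $\alpha$ and $\alpha^{-1}$, does not secretly require $m$ to grow. This is where I would be careful: each finite group in sight is annihilated by $p^{N_0}$ for $N_0$ bounded in terms of the valuations of $\widetilde{\alpha}_v-1$ and $\alpha_p-1$, and once $m\ge N_0$ the congruence $\alpha\equiv 1\pmod{p^m}$ makes the twist invisible on the relevant truncation—this is precisely the content of the ``$m\gg 0$'' hypothesis and is the one place the argument genuinely uses it rather than being a formal computation. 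The remaining steps (the local Euler characteristic bookkeeping, the identification of the transverse-type quotient at $v\mid p$ with an $\rH^0$) are routine and parallel to the untwisted statement in \cite{greenberg-cetraro}.
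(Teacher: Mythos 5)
Your proposal is correct and follows essentially the same line of reasoning as the paper's proof: both reduce to the observation that, because $\alpha\equiv 1\pmod{p^m}$, the twist is invisible on the relevant (finite, $p$-power order bounded independently of $\alpha$) modules once $m$ is large, and both are modeled on the untwisted statement in \cite[Prop.~2.5]{greenberg-cetraro}.

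A couple of remarks on the details. For the first equality (between the $\rH^0$ and the finite quotient), the paper's argument is a bit more streamlined than what you sketch: it works with $A$-coefficients rather than $T$-coefficients, first identifies $\rH^1_{\Fcal_\alpha}(K_v,A(\alpha))$ with the maximal divisible subgroup $\rH^1(K_v,A_v^+(\alpha))_{\rm div}$, then shows the quotient $\rH^1(K_v,A_v^+(\alpha))/\rH^1(K_v,A_v^+(\alpha))_{\rm div}$ is canonically $\rH^2(K_v,T_v^+(\alpha))$, and finally applies local Tate duality twice to read off the isomorphism $\rH^1_{\Fcal_\alpha}(K_v,T(\alpha^{-1}))/\rH^1(K_v,T_v^+(\alpha^{-1}))\cong\rH^0(K_v,A_v^-(\alpha^{-1}))$. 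This is cleaner than the Euler characteristic bookkeeping you anticipate, and sidesteps the ``subquotient vs.\ equality'' concern you leave for a ``direct dimension count.'' Your computation of $\#\rH^0(K_v,A_v^-(\alpha^{\pm 1}))$ via the Frobenius eigenvalue is essentially what the paper does, phrased as $A(\alpha^{\pm 1})[p^m]\cong A[p^m]$. One small imprecision worth flagging: the injectivity of $\rH^1(K_v,T_v^+(\alpha))\to\rH^1(K_v,T(\alpha))$ does not follow from $\rH^0(K_v,T(\alpha))=0$ alone, since the kernel is the image of $\rH^0(K_v,T_v^-(\alpha))$; what you actually need is the vanishing of $\rH^0(K_v,T_v^-(\alpha))$, which holds because the unit root is not a root of unity (and remains valid after twisting by $\alpha$ for $m\gg 0$).
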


\begin{proof}
Let $v$ be a prime of $K$ above $p$. 
Since $\rH^1_{\Fcal_\alpha}(K_v,A(\alpha))$ is the image of $\rH^1_{\Fcal_\alpha}(K_v,V(\alpha))$ in \eqref{eq:def-Falpha} under the natural map 
induced by $V(\alpha)\twoheadrightarrow A(\alpha)$, we see that
\begin{equation}\label{eq:f-div}
\rH^1_{\Fcal_\alpha}(K_v,A(\alpha))=\rH^1(K_v,A_v^+(\alpha))_{\rm div}.
\end{equation}
From the short exact sequence $0 \to T_v^+(\alpha) \to V_v^+(\alpha) \to A_v^+(\alpha) \to 0$ we obtain
\[
\frac{\rH^1(K_v,A_v^+(\alpha))}{\rH^1(K_v,A_v^+(\alpha))_{\rm div}}\cong \rH^2(K_v, T_v^+(\alpha)),
\]
which together with \eqref{eq:f-div} and local Tate duality amounts to
\[
\frac{\rH^1_{\Fcal_\alpha}(K_v,T(\alpha^{-1}))}{\rH^1(K_v,T_v^+(\alpha^{-1}))}\cong\rH^0(K_v,A_v^-(\alpha^{-1})),
\]
whence the first equality in the statement. The second equality is immediate from the fact that $A(\alpha^{\pm{1}})[p^m]\cong A[p^m]$ as $G_K$-modules, and so $\rH^0(K_v,A_v^-(\alpha^{\pm{1}}))\cong\rH^0(K_v,A_v^-)\cong \widetilde{E}(\mathbb{F}_v)[p^\infty]$.
%
\end{proof}

Thus we arrive at the following result.

\begin{cor}\label{cor:index-m}
Suppose $\alpha:\Gamma^{\rm ac}\rightarrow\Z_p^\times$ is such that $\alpha\equiv 1\;({\rm mod}\,p^m)$ for some $m\geq 1$ and $\kappa_{1,\Lambda}^{\rm Heeg}(\alpha)\neq 0$. If Conjecture~\ref{conj:HPMC-det} holds, then up to a $p$-adic unit: 
\[
\#\bigl(\rH^1_{\Fcal_\alpha}(K,T(\alpha))/\Z_p\cdot\kappa_{1,\Lambda}^{\rm Heeg}(\alpha)\bigr)^2=\Biggl(
\prod_{v\mid p}\#\widetilde{E}(\mathbb{F}_v)\Biggr)^2\cdot\bigl({\rm Tam}_E\bigr)^2\cdot\#\sha_{\rm BK}(T(\alpha)^*/K)
\]
for $m\gg 0$.
\end{cor}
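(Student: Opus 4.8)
The proof will parallel that of Corollary~\ref{cor:index-m-Q}, with Proposition~\ref{prop:descent} and Conjecture~\ref{conj:HPMC-det} playing the roles of Proposition~\ref{prop:descent-Q} and Conjecture~\ref{conj:IMC-det}. First I would record the control isomorphism
\[
{\rm det}_{\Lambda^{\rm ac}}^{-1}\widetilde{\mathbf{R}\Gamma}_f(\Tac)\otimes_{\Lambda^{\rm ac},\alpha}\Z_p\cong{\rm det}_{\Z_p}^{-1}\widetilde{\mathbf{R}\Gamma}_f(T(\alpha)),
\]
which follows from the perfectness of $\widetilde{\mathbf{R}\Gamma}_f(\Tac)$ together with the base change identity $\widetilde{\mathbf{R}\Gamma}_f(\Tac)\otimes^{\mathbf{L}}_{\Lambda^{\rm ac},\alpha}\Z_p\cong\widetilde{\mathbf{R}\Gamma}_f(T(\alpha))$ in Nekov{\'a}{\v{r}}'s formalism \cite{nekovar-310} (the Selmer complex analogue of the isomorphism \eqref{eq:perfect-control-kato} used in Corollary~\ref{cor:index-m-Q}). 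Granting Conjecture~\ref{conj:HPMC-det}, the element $\widetilde{\mathfrak{z}}_{K_\infty}$ is a $\Lambda^{\rm ac}$-basis of ${\rm det}_{\Lambda^{\rm ac}}^{-1}\widetilde{\mathbf{R}\Gamma}_f(\Tac)$, so its image $\widetilde{\mathfrak{z}}_{K_\infty}(\alpha)$ under the control isomorphism above is a $\Z_p$-basis of ${\rm det}_{\Z_p}^{-1}\widetilde{\mathbf{R}\Gamma}_f(T(\alpha))$.

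The next step, which carries the bulk of the work, is to show that under the canonical isomorphism $\vartheta$ of Proposition~\ref{prop:descent} the basis $\widetilde{\mathfrak{z}}_{K_\infty}(\alpha)$ is taken, up to a $p$-adic unit, to $\kappa_{1,\Lambda}^{\rm Heeg}(\alpha)\otimes\kappa_{1,\Lambda}^{\rm Heeg}(\alpha^{-1})$. For this one checks that the specialization at $\alpha$ of Nekov{\'a}{\v{r}}'s duality isomorphism \eqref{eq:duality} is compatible with the isomorphism $\vartheta$ assembled in the proof of Proposition~\ref{prop:descent} out of \eqref{eq:nek-duality} and \eqref{eq:PT-duality}; this is the analogue of the compatibility invoked in Corollary~\ref{cor:index-m-Q}. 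Granting it, $\widetilde{\mathfrak{z}}_{K_\infty}(\alpha)$ is carried to the $\alpha$-specialization of $y_\infty\otimes y_\infty$, whose first factor lies in ${\rm Sel}_{\rm Gr}(K,T(\alpha))\subseteq\rH^1_{\Fcal_\alpha}(K,T(\alpha))$ and, by the discussion preceding Lemma~\ref{lemmacongruence} identifying $\kappa_{1,\Lambda}^{\rm Heeg}$ with $y_\infty$ up to a $p$-adic unit, agrees with $\kappa_{1,\Lambda}^{\rm Heeg}(\alpha)$ up to a $p$-adic unit; since the involution $\iota$ sends $\alpha$ to $\alpha^{-1}$, the second factor likewise agrees with $\kappa_{1,\Lambda}^{\rm Heeg}(\alpha^{-1})$ up to a $p$-adic unit inside $\rH^1_{\Fcal_{\alpha^{-1}}}(K,T(\alpha^{-1}))$, this class being nonzero because the nonvanishing of $\kappa_{1,\Lambda}^{\rm Heeg}(\alpha^{-1})$ follows from that of $\kappa_{1,\Lambda}^{\rm Heeg}(\alpha)$ by the action of complex conjugation, as already used in Proposition~\ref{prop:descent}.

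Finally, comparing the two descriptions of the lattice $\vartheta\bigl({\rm det}_{\Z_p}^{-1}\widetilde{\mathbf{R}\Gamma}_f(T(\alpha))\bigr)$ --- on one hand equal, by Proposition~\ref{prop:descent} and for $m\gg 0$, to $L_p^2\cdot({\rm Tam}_E)^2\cdot\#\sha_{\rm BK}(T(\alpha)^*/K)$ times the full $\Z_p$-lattice $\rH^1_{\Fcal_\alpha}(K,T(\alpha))\otimes_{\Z_p}\rH^1_{\Fcal_{\alpha^{-1}}}(K,T(\alpha^{-1}))$, and on the other hand generated over $\Z_p$ by $\vartheta(\widetilde{\mathfrak{z}}_{K_\infty}(\alpha))$, which up to a $p$-adic unit is $\kappa_{1,\Lambda}^{\rm Heeg}(\alpha)\otimes\kappa_{1,\Lambda}^{\rm Heeg}(\alpha^{-1})$ --- yields, up to a $p$-adic unit,
\[
\#\bigl(\rH^1_{\Fcal_\alpha}(K,T(\alpha))/\Z_p\cdot\kappa_{1,\Lambda}^{\rm Heeg}(\alpha)\bigr)\cdot\#\bigl(\rH^1_{\Fcal_{\alpha^{-1}}}(K,T(\alpha^{-1}))/\Z_p\cdot\kappa_{1,\Lambda}^{\rm Heeg}(\alpha^{-1})\bigr)=L_p^2\cdot({\rm Tam}_E)^2\cdot\#\sha_{\rm BK}(T(\alpha)^*/K).
\]
Complex conjugation induces an isomorphism interchanging $\rH^1_{\Fcal_\alpha}(K,T(\alpha))$ with $\rH^1_{\Fcal_{\alpha^{-1}}}(K,T(\alpha^{-1}))$ and carrying $\kappa_{1,\Lambda}^{\rm Heeg}(\alpha)$ to a $p$-adic unit multiple of $\kappa_{1,\Lambda}^{\rm Heeg}(\alpha^{-1})$, so the two indices on the left coincide and, recalling $L_p=\prod_{v\mid p}\#\widetilde{E}(\mathbb{F}_v)$, the claimed formula follows. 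The main obstacle is the middle step: tracking the $\Lambda^{\rm ac}$-adic Heegner class through the base change of Nekov{\'a}{\v{r}}'s duality for Selmer complex determinants and confirming that $\vartheta(\widetilde{\mathfrak{z}}_{K_\infty}(\alpha))$ lands exactly on $\kappa_{1,\Lambda}^{\rm Heeg}(\alpha)\otimes\kappa_{1,\Lambda}^{\rm Heeg}(\alpha^{-1})$ up to a $p$-adic unit; once that commutativity is established, the rest is bookkeeping with the exact sequences \eqref{eq:exact-tri}, \eqref{eq:PT} and \eqref{eq:tam} already in place.
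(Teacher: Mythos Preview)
Your proposal is correct and follows essentially the same approach as the paper: invoke the perfect control isomorphism \cite[Prop.~8.10.1]{nekovar-310} to descend the $\Lambda^{\rm ac}$-basis $\widetilde{\mathfrak{z}}_{K_\infty}$ to a $\Z_p$-basis $\widetilde{\mathfrak{z}}_{K_\infty}(\alpha)$, identify its image under $\vartheta$ with $\kappa_{1,\Lambda}^{\rm Heeg}(\alpha)\otimes\kappa_{1,\Lambda}^{\rm Heeg}(\alpha^{-1})$ up to a $p$-adic unit, apply Proposition~\ref{prop:descent}, and use complex conjugation to equate the two indices. Your treatment is in fact more explicit than the paper's on the compatibility of $\vartheta$ with the specialization of \eqref{eq:duality}, which the paper simply asserts.
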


\begin{proof}
Assume Conjecture~\ref{conj:HPMC-det}, let $\widetilde{\mathfrak{z}}_{K_\infty}$ be the $\Lambda^{\rm ac}$-basis of ${\rm det}_{\Lambda^{\rm ac}}^{-1}\widetilde{\mathbf{R}\Gamma}_f(\Tac)$  corresponding to $y_\infty\otimes y_\infty$ under the isomorphism \eqref{eq:duality}, and let $\widetilde{\mathfrak{z}}_{K_\infty}(\alpha)$ denote 
its image under the surjection
$${\rm det}_{\Lambda^{\rm ac}}^{-1}\widetilde{\mathbf{R}\Gamma}_f(\Tac) \twoheadrightarrow 
{\rm det}_{\Z_p}^{-1}\widetilde{\mathbf{R}\Gamma}_f(T(\alpha))$$
induced by the `perfect control' isomorphism
\begin{equation}\label{eq:deriv-control}
\widetilde{\mathbf{R}\Gamma}_f(\Tac)\otimes_{\Lambda^{\rm ac},\alpha}^{\mathbf{L}}\Z_p\cong
\widetilde{\mathbf{R}\Gamma}_f(T(\alpha))
\end{equation}
of \cite[Prop.~8.10.1]{nekovar-310}. 
Then $\widetilde{\mathfrak{z}}_{K_\infty}(\alpha)$ is a $\Z_p$-basis of ${\rm det}_{\Z_p}^{-1}\widetilde{\mathbf{R}\Gamma}_f(T(\alpha))$ and 
its image under the isomorphism $\vartheta$ of Proposition~\ref{prop:descent} is given by $\kappa_{1,\Lambda}^{\rm Heeg}(\alpha)\otimes\kappa_{1,\Lambda}^{\rm Heeg}(\alpha^{-1})$ up to a $p$-adic unit. 
Noting that
\[
\#\bigl(\rH^1_{\Fcal_\alpha}(K,T(\alpha))/\Z_p\cdot\kappa_{1,\Lambda}^{\rm Heeg}(\alpha)\bigr)=\#\bigl(\rH^1_{\Fcal_{\alpha^{-1}}}(K,T(\alpha^{-1}))/\Z_p\cdot\kappa_{1,\Lambda}^{\rm Heeg}(\alpha^{-1})\bigr)
\]
by the action of complex conjugation, the result thus follows from Proposition~\ref{prop:descent}.
\end{proof}

\begin{rem}
For $p$ split in $K$, Corollary~\ref{cor:index-m} was proved in \cite[Cor.~1.2.11]{BCGS} based on a study of the $p$-adic $L$-function of \cite{BDP}. It is unclear whether the method in \cite{BCGS} can be extended to the nonsplit case.
\end{rem}

\subsection{Proof of Theorem~\ref{thmintro-Kol}}

With the results in the preceding sections in hand, the proof now follows similarly as in the case of  Theorem~\ref{thmintro-Kur}, so we shall be rather brief. 

Choose a non-trivial $\alpha:\Gamma^{\rm ac}\rightarrow\Z_p^\times$ with $\alpha\equiv 1\;({\rm mod}\,p^m)$ for some $m\geq 1$, and assume $m$ is large enough so that $\kappa_{1,\Lambda}^{\rm Heeg}(\alpha)\neq 0$ (as is possible by the nonvanishing results of Cornut--Vatsal \cite{cornut,vatsal}).

If the anticyclotomic Iwasawa Main Conjecture~\ref{conj:HPMC-det} holds, then  Theorem~\ref{thm:kol-control} and Corollary~\ref{cor:index-m} give
\begin{equation}\label{eq:ind-lambda-Heeg}
\mathscr{M}_\infty(\boldsymbol{\kappa}_\Lambda^{\rm Heeg}(\alpha))=\sum_{v\mid p}{\rm ord}_p(\#\widetilde{E}(\mathbb{F}_v))+{\rm ord}_p({\rm Tam}_E).
\end{equation}
With notations as in Lemma~\ref{lemmacongruence}, we note that
\[
\Biggl(\prod_{v\mid p}\#\widetilde{E}(\mathbb{F}_v)\Bigg)\cdot\Z_p=\begin{cases}
(\alpha_p-1)^2(\beta_p-1)^2\cdot\Z_p&\textrm{if $p$ splits in $K$,}\\[0.2em]
((p+1)^2-a_p^2)\cdot\Z_p&\textrm{if $p$ is inert in $K$.}
\end{cases}
\]
Thus from \cite[Prop.~2.2.1]{BCGS} (the analogue of Proposition~\ref{prop:mccullen} in the present  setting), Lemma~\ref{lemmacongruence}, and \eqref{eq:ind-lambda-Heeg} we arrive at
\[
\mathscr{M}_\infty(\boldsymbol{\kappa}^{\rm Heeg})={\rm ord}_p({\rm Tam}_E),
\]
concluding the proof of the implication ${\rm (ii)}\Rightarrow{\rm (i)}$ in Theorem~\ref{thmintro-Kol}. The proof of ${\rm (i)}\Rightarrow{\rm (ii)}$ follows similarly as in the case of Theorem~\ref{thmintro-Kur}, combining:
\begin{enumerate}
\item the divisibility
$\Lambda^{\rm ac}\cdot\widetilde{\mathfrak{z}}_{K_\infty}\subset{\det}_{\Lambda^{\rm ac}}^{-1}\widetilde{\mathbf{R}\Gamma}_f(\Tac)$ (see Remark~\ref{rem:div-det}) deduced from \cite[Thm.~2.2.10]{howard} and the nonvanishing of $\kappa_{1,\Lambda}^{\rm Heeg}$;
\item the equality $\Z_p\cdot\widetilde{\mathfrak{z}}_{K_\infty}(\alpha)={\rm det}_{\Z_p}^{-1}\widetilde{\mathbf{R}\Gamma}_f(T(\alpha))$ coming from the assumption $\mathscr{M}_\infty(\boldsymbol{\kappa}^{\rm Heeg})={\rm ord}_p({\rm Tam}_E)$ in (i) and the calculations in $\S\ref{subsec:descent-K}$,
\end{enumerate}
and invoking \cite[Lem.~3.2]{skinner-urban} to conclude.

\bibliography{references}
\bibliographystyle{alpha}
\end{document}